\documentclass{amsart}
\usepackage{graphicx} 
\usepackage{amsfonts}
\usepackage{amssymb}
\usepackage{color}
\usepackage{amsmath}
\usepackage{amsthm}
\usepackage{multicol}
\usepackage{mathtools}
\usepackage{tikz,tikz-cd, tcolorbox}
\usepackage[colorlinks=true,linkcolor=blue,citecolor=blue]{hyperref}
\usepackage{setspace, mathrsfs}
\usepackage{bussproofs}
\theoremstyle{plain}
\newtheorem{theorem}{Theorem}[section]
\newtheorem{lemma}[theorem]{Lemma}
\newtheorem{proposition}[theorem]{Proposition}
\newtheorem{corollary}[theorem]{Corollary}
\usepackage{amsaddr}

\theoremstyle{definition}
\newtheorem{definition}[theorem]{Definition}
\newtheorem{remark}[theorem]{Remark}

\begin{document}
\title[Spectral duality for  De Morgan algebras]{Spectral duality for some modal and residuated groupoid expansions of De Morgan algebras}
\author{Joseph McDonald }
\address{Institute of Computer Science, Czech Academy of Sciences}

\email{mcdonald@cs.cas.cz}
\date{May 2026}
\maketitle

\begin{abstract} 
Stone demonstrated that the category $\mathbf{DLATT_{0,1}}$ of bounded distributive lattices is dually equivalent to the category  $\mathbf{Spec}$ of spectral spaces and Priestley showed that $\mathbf{DLatt_{0,1}}$ is dually equivalent to the category $\mathbf{Priest}$ of Priestley spaces so that $\mathbf{Spec}$ is equivalent $\mathbf{Priest}$. Cornish strengthened this by showing that $\mathbf{Spec}$ and $\mathbf{Priest}$ are in fact isomorphic. In this study, we investigate the duality theory of various lattice expansions of certain bounded distributive lattice-ordered algebras, known as \emph{De Morgan algebras}. In particular we obtain spectral duality results for the category $\mathbf{S4DM}$ of De Morgan algebras equipped with a closure operator, which we call \emph{S4 De Morgan algebras}, as well as for the category $\mathbf{DMGrp}$ of De Morgan groupoids. This is achieved by an appropriate adaptation of Bimb\'o's Priestley-style duality for general De Morgan algebras together with Urquhart's Priestley-style duality for relevance algebras under the isomorphism between $\mathbf{Priest}$ and $\mathbf{Spec}$.
\par
\vspace{.2cm}
\noindent KEYWORDS: De Morgan algebra; Modal algebra; Lattice-ordered groupoid; Residuated lattice; Spectral space; Duality theory.   
\par
\vspace{.2cm}
\noindent MSC (2020): 06D30; 06D05; 06A15; 20N02; 06B15.
\end{abstract}

\section{Introduction}
The duality theory of lattice-based algebras was pioneered by Stone \cite{stone1} which in its modern formulation (due to Doctor \cite{doctor}) states that every Boolean algebra is isomorphic to the clopen subsets of a compact zero-dimensional Hausdorff space, known as a \emph{Stone space}, and moreover, that the category $\mathbf{BA}$ of Boolean algebras and Boolean homomorphisms is dually equivalent to the category $\mathbf{Stone}$ of Stone spaces and continuous functions. This result was later generalized by Stone \cite{stone2} who showed that every bounded distributive lattice is isomorphic to the compact open subsets of a coherent sober compact $T_0$-space, known as a \emph{spectral space}, and moreover, that the category $\mathbf{DLatt_{0,1}}$ of bounded distributive lattices and bounded lattice homomorphisms is dually equivalent to the category $\mathbf{Spec}$ of spectral spaces and spectral maps (i.e., continuous functions whose preimage of a compact set is compact). Priestley \cite{priestly} gave an alternative duality for bounded distributive lattices by introducing certain partially ordered Stone spaces, known as \emph{Priestley spaces}, and demonstrated that every bounded distributive lattice is isomorphic to the clopen upsets of a Priestley space, and moreover, that $\mathbf{DLatt_{0,1}}$ is dually equivalent to the category $\mathbf{Priest}$ of Priestley spaces and continuous monotone functions. Since both $\textbf{Spec}$ and $\textbf{Priest}$ are dually equivalent to $\mathbf{DLatt_{0,1}}$, it follows that $\textbf{Spec}$ is equivalent to $\textbf{Priest}$. In fact a stronger claim is true: $\textbf{Spec}$ is isomorphic to $\textbf{Priest}$ (consult Cornish \cite{cornish} and Fleisher \cite{fleisher}). Duality theory, aside from being of much mathematical interest, has found many applications within theoretical computer science such as in denotational semantics of programs and program logics \cite{abram}, regular languages \cite{gehrke}, and probabilistic systems \cite{alv, jung1, jung2, jung3}.

In the first part of this paper, we study the duality theory of certain S4-type lattice expansions of De Morgan algebras by means of spectral spaces. A \emph{De Morgan algebra} is a bounded distributive lattice $A$ equipped with an additional operation $-\colon A\to A$ which is an order-inverting involution (equivalently, an involution satisfying De Morgan's identities with respect to meets and joins). De Morgan algebras play an important role within the model theory of non-classical logics as they provide algebraic models for 4-valued Belnap-Dunn logic \cite{belnap}. In particular, we study S4 De Morgan algebras which consist of a De Morgan algebra $A$ equipped with a closure operator $\nabla\colon A\to A$. This is not the first investigation of the duality theory of De Morgan algebras. Bimb\'o \cite{bimbo} established various duality results for De Morgan algebras from the perspective of De Morgan gaggle spaces, involution spaces, as well as product spaces. The De Morgan gaggle space approach arises as a special instance of the gaggle theory approach to the duality theory of bounded distributive lattices with operators developed by Bimb\'o and Dunn \cite{bimbo1}. The topological duals in this case are constructed by equipping a Priestley space with an additional binary relation satisfying certain conditions. The order-inverting involution on the induced bounded distributive lattice of clopen upsets of the Priestley space is defined through the additional binary relation which is determined by the distribution type of the operation of De Morgan complementation in a De Morgan algebra. The involution space approach involves equipping to a Priestley space, a single-variable function that is an order-inverting involution and then defining De Morgan complementation on the induced bounded distributive lattice of clopen upsets of the Priestley space through this function. This approach directly exploits the pleasant interaction that is known to exist between prime filters and the operation of De Morgan complementation on distributive lattices. The product space approach arises via Dunn's polarity semantics \cite{dunn} and involves starting with a compact topological space endowed with a product topology satisfying Priestley-style separation axioms with respect with both clopen upsets and clopen downsets. Here, the canonical frame is constructed from filter-ideal pairs and the canonical representation map sends every element into the collection of all filter-ideal pairs that intersect with that element.  

The duality established in this paper for S4 De Morgan algebras involves equipping to a spectral space $X$ an order-inverting involution $g\colon X\to X$ with respect to the specialization order on $X$, as well as binary relation $R$ on $X$ that is reflexive and transitive. By imposing additional conditions on the topology of $X$, we arrive at what we call an \emph{S4 De Morgan spectral space}. For an S4 De Morgan algebra $A$, it is shown that the prime filter spectrum $\mathfrak{P}(A)$ generated by the constructible topology, together with a single-variable function $g_A$ and binary relation $R_A$ on $\mathfrak{P}(A)$, gives rise to an S4 De Morgan spectral space. We then demonstrate that the algebra $\Omega(X)$ of compact open subsets of an S4 De Morgan spectral space $X$ form an S4 De Morgan algebra whose operation for De Morgan complementation $^*\colon\Omega(X)\to\Omega(X)$ is defined through $g$ and whose closure operator $\nabla_R\colon\Omega(X)\to\Omega(X)$ is defined through $R$. We then proceed to verify that every S4 De Morgan algebra $A$ is isomorphic to the algebra of compact open subsets $\Omega(\mathfrak{P}(A))$ on the prime filter spectrum $\mathfrak{P}(A)$ of $A$ and that every S4 De Morgan spectral space $X$ is homeomorphic and relationally isomorphic to the prime filter spectrum $\mathfrak{P}(\Omega(X))$ on the algebra of compact open subsets $\Omega(X)$ of $X$. With the introduction of suitable spectral frame morphisms on S4 De Morgan spectral spaces, we demonstrate that the category $\mathbf{S4DM}$ of S4 De Morgan algebras is dually equivalent to the category $\mathbf{S4DMSpec}$ of S4 De Morgan spectral spaces. The $R$-free reducts of S4 De Morgan spectral spaces, which we call \emph{De Morgan spectral spaces}, provide a duality between the category $\mathbf{DM}$ of general De organ algebras and $\mathbf{DMSpec}$ of De Morgan spectral spaces.

The obtained results are then used to study the duality theory of De Morgan groupoids, which arise as certain residuated lattice-ordered groupoid expansions of De Morgan algebras. In particular, a De Morgan groupoid is a De Morgan algebra $A$ equipped with a left-residuated pair of operations $(\cdot,\rightarrow)\colon A\times A\to A$ as well as a $0$-ary constant $t\in A$ that is a left-groupoid identity for the residual $\cdot$. In addition, the bottom universal bound $0\in A$ is required to be a unit element with respect to the residual $\cdot$ and the residual distributes over finite joins. De Morgan groupoids are closely related to the positive Ackermann groupoids studied by Routley and Meyer \cite{meyer} in their algebraic treatment of the relevance logic $\mathbf{B}^+$. In fact, every De Morgan groupoid may be viewed as the $\{-\}$-free reduct of a positive Ackermann groupoid. De Morgan groupoids are also closely related to the relevance algebras introduced by Urquhart \cite{urq}. De Morgan groupoids in this case are obtained by requiring that the dual lattice homomorphism acting on the underlying bounded distributive lattice reduct of a relevance algebra be an involution (i.e., a De Morgan complement). Urquhart studied the duality theory of relevance algebras by introducing certain Priestley spaces, equipped with an additional ternary relation satisfying certain conditions, as well as a single variable continuous decreasing function.

The duality established in this paper for De Morgan groupoids extends our spectral duality for De Morgan algebras by equipping a De Morgan spectral space $X$ with an additional ternary relation satisfying certain conditions along the lines of \cite{urq}. By imposing additional conditions on the topology of $X$, we arrive at what we call an \emph{De Morgan groupoid spectral spaces} (or \emph{DMGrp-spaces}). It is shown that for any De Morgan groupoid $A$, the prime filter spectrum $\mathfrak{P}(A)$ generated by the constructible topology gives rise to an DMGrp-space. We then demonstrate that the algebra $\Omega(X)$ of compact open subsets of a DMGrp-space $X$ forms an De Morgan groupoid. We then proceed to show that every De Morgan groupoid $A$ is isomorphic to the algebra of compact open subsets $\Omega(\mathfrak{P}(A))$ of the prime filter spectrum $\mathfrak{P}(A)$ of $A$ and that every DMGrp-space $X$ is homeomorphic and relationally isomorphic to the prime filter spectrum $\mathfrak{P}(\Omega(X))$ of the algebra of compact open subsets $\Omega(X)$ of $X$. With the introduction of suitable spectral frame morphisms on DMGrp-spaces, we demonstrate that the category $\mathbf{DMGrp}$ of De Morgan groupoids is dually equivalent to the category $\mathbf{DMGrpSpec}$ of DMGgrp-spaces. A simple extension of DMGrp-spaces by means of a binary reflexive transitive relation along the lines of our spectral duality for De Morgan algebras then gives rise to a dual equivalence between the category $\mathbf{S4DMGrp}$ of S4 De Morgan groupoids and the category $\mathbf{S4DMGrpSpec}$ of S4DMGrp-spaces.

\section{S4 De Morgan Algebras}

In this section, we describe some basic details of De Morgan lattices and De Morgan algebras. We then introduce the variety of S4 De Morgan algebras. 

\begin{definition}
    A \emph{De Morgan lattice} is a lattice $\langle A;\wedge,\vee\rangle$ equipped with an additional operator $-\colon A\to A$, known as a \emph{De Morgan complement}, satisfying:   
        \begin{enumerate}
            \item $a\leq b\Longrightarrow-b\leq-a$; 
            \item $--a=a$. 
        \end{enumerate}
\end{definition}
\begin{remark}
    A De Morgan lattice may be equivalently defined as a distributive lattice $\langle A;\wedge,\vee\rangle$ equipped with an involution $-\colon A\to A$ satisfying:
    \[-(a\wedge b)=-a\vee-b,\hspace{.4cm}-(a\vee b)=-a\wedge-b.\]
\end{remark}
Figure \ref{hasse diagrams of de morgan algebras} depicts Hasse diagrams of various De Morgan lattices.

\begin{figure}[ht]
    \centering
 \begin{tikzcd}
\bullet                     & \bullet                    &                             & \bullet                                         &                             \\
                            & \bullet \arrow[u, no head] & \bullet \arrow[ru, no head] &                                                 & \bullet \arrow[lu, no head] \\
\bullet \arrow[uu, no head] & \bullet \arrow[u, no head] &                             & \bullet \arrow[ru, no head] \arrow[lu, no head] &
\end{tikzcd}
\qquad
\begin{tikzcd}[row sep=0.28cm]
\bullet \\
\bullet \arrow[u, no head] \\
\bullet \arrow[u, no head] \\
\bullet \arrow[u, no head]
\end{tikzcd}
\par
\vspace{.5cm}
    \caption{The De Morgan lattices $\text{B}_2$, $\text{KL}_3$, $\text{DM}_4$, $\text{RegKL}_4$}
    \label{hasse diagrams of de morgan algebras}
\end{figure}
More concretely,  let $[0,1]=\{x\in\mathbb{R}:0\leq x\leq 1\}$ and define the function $\eta\colon[0,1]\to[0,1]$ by $\eta(x)=1-x$, then $\langle[0,1];\min,\max,\eta\rangle$ is a De Morgan lattice since $\langle[0,1];\min,\max\rangle$ is obviously a distributive lattice and $x\le y$ implies $\eta(y)\leq\eta(x)$ as well as $\eta(\eta(x))=x$ for all $x,y\in[0,1]$. 

 De Morgan lattices provide an algebraic model for Belnap-Dunn logic \cite{belnap} in the sense that a sequent $\Gamma\Rightarrow\Delta$ is a consequence of the sequents: \[\Gamma_1\Rightarrow\Delta_1\&\dots\&\hspace{.1cm}\Gamma_n\Rightarrow\Delta_n\] if and only if the quasi-equation: 
\[\bigwedge_{\gamma_1\in\Gamma_1}\gamma_1\leq\bigvee_{\delta_1\in\Delta_1}\delta_1\hspace{.1cm}\&\dots\&\bigwedge_{\gamma_n\in\Gamma_n}\gamma_n\leq\bigvee_{\delta_n\in\Delta_n}\delta_n\Longrightarrow\bigwedge_{\gamma\in\Gamma}\gamma\leq\bigvee_{\delta\in\Delta}\delta\] obtains in every De Morgan lattice. 
\begin{definition}
    Let $A$ be a De Morgan lattice. Then: 
    \begin{enumerate}
        \item $A$ is \emph{Boolean} if $a\wedge-a\leq b$; 
        \item $A$ is Kleene if $a\wedge-a\leq b\vee-b$; 
        \item $A$ is non-idempotent if $a=-a\Rightarrow a=b$; 
        \item $A$ is Kleene-regular if $a\leq -a\hspace{.1cm}\&-a\wedge b\leq a\vee-b\Rightarrow b\leq-b$. 
    \end{enumerate}
\end{definition}

\begin{proposition}[\cite{spencer}]
    A non-trivial De Morgan lattice $A$ is non-idempotent if and only if there exists a homomorphism from $A$ to $B_2$. 
\end{proposition}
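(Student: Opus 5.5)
We use the convention that a homomorphism into $\text{B}_2$ preserves $\wedge$, $\vee$ and $-$, where $-$ swaps $0$ and $1$. We also read ``non-idempotent'' for a non-trivial $A$ as saying that $-$ has no fixed point: if $a=-a$ then $a=b$ for every $b$, which is impossible when $A$ has two distinct elements. The proof then reduces to the statement ``$-$ has no fixed point in $A$ iff there is a De Morgan homomorphism $h\colon A\to \text{B}_2$''. Equivalently, it reduces to the existence of a prime filter $F$ with $x\in F \iff -x\notin F$; call such an $F$ \emph{self-dual}. Given a self-dual prime filter $F$, its characteristic map $h$ is a lattice homomorphism and satisfies $h(-x)=1-h(x)$.

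\emph{Easy direction.} Suppose $h\colon A\to\text{B}_2$ is a homomorphism and $a=-a$. Then $h(a)=-h(a)$ in $\text{B}_2$, which is impossible because $-$ has no fixed point on $\{0,1\}$. Hence no fixed point exists, and the non-idempotence implication holds vacuously.

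\emph{Hard direction.} Call a filter $F$ \emph{consistent} if there is no $x$ with both $x\in F$ and $-x\in F$. The plan has four steps.

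First, we produce a consistent filter. If $a\le -a$ held for all $a$, then applying $-$ would give $-a\le a$, so every element would be a fixed point. Hence there is some $a$ with $a\not\le -a$. The principal filter ${\uparrow}a$ is consistent: if $x,-x\ge a$, then $a\le x\wedge -x$, so $-a\ge x\vee -x\ge x\ge a$, which contradicts the choice of $a$.

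Second, consistency is preserved under unions of chains, so Zorn's lemma gives a maximal consistent filter $F\supseteq {\uparrow}a$.

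Third, we show $F$ is total, i.e.\ for every $x$ either $x\in F$ or $-x\in F$.

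Fourth, primeness follows from totality. Suppose $x\vee y\in F$ but $x,y\notin F$. Totality gives $-x,-y\in F$, hence $-x\wedge -y=-(x\vee y)\in F$, contradicting consistency. Consistency together with totality then gives exactly $x\in F\iff -x\notin F$, so $F$ is the required self-dual prime filter.

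The main obstacle is the totality step. Suppose neither $x$ nor $-x$ lies in $F$. By maximality, the filters generated by $F\cup\{x\}$ and by $F\cup\{-x\}$ are both inconsistent. Unwinding this and taking a common lower bound $h\in F$ of the witnesses yields
\[ h\wedge x\le -h\vee -x \quad\text{and}\quad h\wedge -x\le -h\vee x. \]
The first thing I would try is to combine these inequalities with distributivity to manufacture an element $c$ built from $h,-h,x,-x$ with $c=-c$, contradicting the absence of fixed points. A natural candidate is a symmetric lattice polynomial in $h,-h,x,-x$ that is invariant under $-$ by construction. If no such explicit $c$ emerges, the fallback is a Zorn argument over pairs $(F,-F)$ of disjoint filter and ideal, using a prime-filter separation theorem for distributive lattices. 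In either route, the non-idempotence hypothesis must be used exactly at this point.
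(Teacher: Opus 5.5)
The paper gives no proof of this proposition; it only cites it. Your proposal, however, has a genuine gap. The easy direction and Steps 1, 2 and 4 are fine, but Step 3 is false, so the hard direction fails.

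Take $A=\text{DM}_4\times\text{B}_2$ with $\text{DM}_4=\{0,p,q,1\}$, $-p=p$, $-q=q$. It is non-trivial and non-idempotent, since $-$ has no fixed point on the $\text{B}_2$ factor. Because $(1,0)\not\le(0,1)=-(1,0)$, your Step 1 may start from $F={\uparrow}(1,0)=\{(1,0),(1,1)\}$. This filter is consistent and already maximal consistent. A strictly larger filter contains some $(w,z)$ with $w\neq 1$, hence $(w,0)=(w,z)\wedge(1,0)$ with $w\in\{0,p,q\}$, hence also $-(w,0)=(-w,1)\geq(w,0)$.

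Yet neither $(p,0)$ nor $-(p,0)=(p,1)$ lies in $F$, so $F$ is not total. Your two inequalities hold here: with $h=(1,0)$ and $x=(p,0)$ both read $(p,0)\le(p,1)$. So no fixed point can be manufactured from them, because none exists.

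Worse, no continuation from this $F$ can succeed. The prime filters of $A$ are ${\uparrow}p\times\text{B}_2$, ${\uparrow}q\times\text{B}_2$ and $\text{DM}_4\times\{1\}$. The map $x\mapsto\{c:-c\notin x\}$ swaps the first two, so the only self-dual prime filter is $\text{DM}_4\times\{1\}$, which omits $(1,0)$. Your fallback fails on the same example: separating $F$ from the ideal $-F$ by a prime filter yields ${\uparrow}p\times\text{B}_2$ or ${\uparrow}q\times\text{B}_2$. Non-idempotence is a global condition; it does not surface as a local obstruction at a maximality step.

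What is missing is a different argument; the standard one is the finite case plus compactness.

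\emph{Finite case.} For finite $B$, represent $B$ as all upsets of its poset $X$ of prime filters, with $-$ becoming $U\mapsto X\setminus g[U]$, where $g(x)=\{c:-c\notin x\}$. Suppose $g$ had no fixed point. Then you can build an upset $U$ with $X=U\sqcup g[U]$ greedily. Given an upset $U$ with $U\cap g[U]=\emptyset$ and some $x\notin U\cup g[U]$ (so $g(x)$ is unassigned too), swap $x$ and $g(x)$ if necessary so that $x\not\le g(x)$; this is possible since $x\neq g(x)$. Enlarge $U$ to $U\cup{\uparrow}x$. Disjointness survives because $g[U]$ is a downset, $U$ is an upset, and ${\uparrow}x\cap{\downarrow}g(x)=\emptyset$. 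The final $U$ equals $\phi(c)$ with $-c=c$, a contradiction. So $B$ has a self-dual prime filter. This is where the hypothesis is really used: it is a 2-SAT-type argument on the dual poset, not on filters.

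\emph{General case.} Finitely generated De Morgan lattices are finite. So each finite $S\subseteq A$ lies in a finite subalgebra without fixed points. Its self-dual prime filter extends, by the Prime Filter Theorem, to a prime filter of $A$ that is self-dual on $S$. Hence the sets of lattice homomorphisms $h\colon A\to\{0,1\}$ with $h(-s)=1-h(s)$ for all $s\in S$ are nonempty closed subsets of $\{0,1\}^A$ with the finite intersection property. Compactness then yields the required homomorphism.
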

\begin{proposition}[\cite{pynko}]
    If a De Morgan lattice $A$ is neither Kleene nor non-idempotent, then there exists an embedding from $\text{DM}_4$ into $A$. 
\end{proposition}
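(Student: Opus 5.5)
We argue directly from the failure of the two properties. Since $A$ is not Kleene, there are $a,b\in A$ with $a\wedge -a\not\leq b\vee -b$. Since $A$ is not non-idempotent, there is $c\in A$ with $c=-c$ (and some $d\neq c$, which we will not need). The plan is to build four elements $0'<x,y<1'$ with $x,y$ incomparable, $-x=x$, $-y=y$, $-0'=1'$. This is exactly the diagram $\text{DM}_4$, in which the two atoms are fixed points of the complement.

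First we extract a pair of elements giving the non-Kleene failure in a symmetric form. Put $p=a\wedge -a$ and $q=b\vee -b$. Then $-p=-a\vee a\geq p$ and $-q=-b\wedge b\leq q$, so $p\leq -p$ and $-q\leq q$, and by assumption $p\not\leq q$. Replacing $p$ by $p'=p\wedge -q$ and $q$ by $q'=q\vee -p$ keeps the relations $p'\leq -p'$ and $-q'\leq q'$, while $-p'=q'$. We also need $p'\not\leq q'$; verifying this is a distributivity computation, and we would check it first, modifying the choice if necessary. So we obtain an element $u$ with $u\leq -u$ and $u\not\geq -u$, and the non-Kleene pair with $u\not\leq -v$ for $v=$ the "positive" element.

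Next we use the fixed point $c$. We set
\[x=(u\wedge c)\vee(-u\wedge c)\vee(u\wedge -u)\]
and choose $y$ by a symmetric formula. The aim is that the median-type term $(x\wedge y)\vee(x\wedge c)\vee(y\wedge c)$, built from two elements and the fixed point $c$, is itself a fixed point, since $-$ swaps meets and joins and fixes $c$. Applying this to $u$ and $-u$ yields the fixed points
\[m=(u\wedge -u)\vee(u\wedge c)\vee(-u\wedge c)=(u\vee -u)\wedge(u\vee c)\wedge(-u\vee c),\]
where the equality holds by distributivity (the median identity). We then take $x=m$ and $y=m'$, built analogously from another element, for instance $(u\wedge c)\vee(-u\wedge -c)$, or from $c$ itself together with $u$. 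The subalgebra we claim works is
\[\{x\wedge y,\ x,\ y,\ x\vee y\},\]
with $-(x\wedge y)=x\vee y$ automatically. It is closed under the lattice operations and $-$, so the inclusion is an embedding of $\text{DM}_4$ provided $x$ and $y$ are incomparable.

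The main obstacle is incomparability. The case $x\leq y$ must be shown to force $a\wedge -a\leq b\vee -b$, contradicting the non-Kleene assumption. We would attempt this through prime filter separation in the distributive lattice: take a prime filter $F$ containing $a\wedge -a$ but not $b\vee -b$, and a prime filter $G$ containing $c$ (using $c=-c$), and test membership of $x$ and $y$. The fallback, if the direct choice fails, is to vary which of $u,-u,c$ enter each median until the separation goes through.
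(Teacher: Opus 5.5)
The paper gives no proof of this statement; it is quoted from Pynko. So your argument has to stand on its own, and as written it does not. Your target is right: two incomparable fixed points $x,y$ give the subalgebra $\{x\wedge y,x,y,x\vee y\}\cong\text{DM}_4$. Your tool is also right: for any $u$,
\[m(u)=(u\wedge -u)\vee(u\wedge c)\vee(-u\wedge c)=(u\vee -u)\wedge(u\vee c)\wedge(-u\vee c)\]
satisfies $-m(u)=m(u)$. But you never produce a pair that works, and there are three concrete problems.

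\begin{enumerate}
\item The normalization step fails identically. Since $p'=p\wedge -q\leq p\leq -p\leq q\vee -p=q'$, you always have $p'\leq q'$. Collapsing the two witnesses into one element $u$ throws away exactly the non-Kleene information.
\item Your suggested second element is degenerate. The element $w=(u\wedge c)\vee(-u\wedge -c)=(u\vee -u)\wedge c$ satisfies $w\leq c\leq -w$, so $m(w)=c$ for every $u$.
\item $m(u)$ can itself equal $c$. In $A=\text{DM}_4$ with fixed points $n,e$, take $c=n$, $a=n$, $b=e$. Then $u=a\wedge -a\wedge b\wedge -b=0$ and $m(u)=n=c$.
\end{enumerate}

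So in the smallest example all your candidates coincide with $c$. Meanwhile the incomparability step, which is the heart of the matter, is left to an unspecified prime-filter argument with a trial-and-error fallback.

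The missing idea is to keep the two witnesses apart and take $x=m(a)$, $y=m(b)$. From the join form, $a\wedge -a\leq x$; from the meet form, $y\leq b\vee -b$. Hence $x\leq y$ would give $a\wedge -a\leq b\vee -b$, so $x\not\leq y$. The other direction is automatic, and your sketch does not address it: fixed points form an antichain, since $y\leq x$ gives $x=-x\leq -y=y$. Thus $x,y$ are incomparable fixed points and you are done, with no prime filters needed. Note also that a pair of the form $\{c,m(a)\}$ or $\{c,m(b)\}$ cannot work uniformly. In the example above $m(a)=c$, and with $c=e$ instead one gets $m(b)=c$.
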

\begin{figure}[ht]
    \centering
\begin{tikzcd}
                              & \text{DM}                                          &                                  \\
                              & \text{KL}\cup\text{NiDM} \arrow[u, no head]        &                                  \\
\text{KL} \arrow[ru, no head] &                                                     & \text{NiDM} \arrow[lu, no head] \\
                              & \text{NiKL} \arrow[lu, no head] \arrow[ru, no head] &                                  \\
                              & \text{RegKL} \arrow[u, no head]                     &                                  \\
                              & \text{BA} \arrow[u, no head]                        &                                  \\
                              & \text{Triv} \arrow[u, no head]                   &                                 
\end{tikzcd}
  \caption{Lattice of quasivarieties of De Morgan lattices}
    \label{figure 1}
\end{figure}
Figure \ref{figure 1} depicts the lattice of quasivarieties of De Morgan lattices described by Pynko \cite{pynko} where:
\begin{itemize}
    \item $\text{Triv}$ is the trivial quasivariety; 
    \item $\text{BA}$ is the class of Boolean algebras (which is generated by $\text{B}_2$); 
    \item $\text{RegKL}$ is the class of regular Kleene lattices (which is generated by $\text{RegKL}_4$); 
    \item $\text{NiKL}$ is the class of non-idempotent Kleene algebras (which is generated by $\text{KL}_3\times\text{B}_2$); 
    \item $\text{KL}$ is the class of Kleene lattices (which is generated by $\text{KL}_3$); 
    \item $\text{NiDM}$ is the class of non-idempotent De Morgan lattices (which is generated by $\text{DM}_4\times\text{B}_2$); 
    \item $\text{KL}\cup\text{NiDM}$ is the quasivariety axiomatized by the following quasiequation $a=-a\Rightarrow b\wedge-b\leq c\vee-c$ and is generated by $(\text{KL}_3, \text{DM}_4\times\text{B}_2)$; 
    \item $\text{DM}$ is the variety of De Morgan lattices (which is generated by $\text{DM}_4$). 
\end{itemize}

 The variety of De Morgan algebras is obtained by adjoining lower and upper universal bounds to De Morgan lattices. In particular: 

\begin{definition}
    A \emph{De Morgan algebra} is an algebra $\langle A;\wedge,\vee,-,0,1\rangle$ such that: 
    \begin{enumerate}
        \item $\langle A;\wedge,\vee,-\rangle$ is a De Morgan lattice; 
        \item $\langle A;\wedge,\vee,0,1\rangle$ is a bounded lattice. 
    \end{enumerate}
\end{definition}
It is obvious that the examples provided for De Morgan lattices also provide examples of De Morgan algebras.

\begin{definition}
    An \emph{S4 De Morgan algebra} is a De Morgan algebra $A$ equipped with an additional operator $\nabla\colon A\to A$ satisfying: 
        \begin{enumerate}
            \item $\nabla(a\vee b)=\nabla a\vee\nabla b$; 
            \item $\nabla0=0$; 
            \item $a\leq\nabla a$; 
            \item $\nabla\nabla a\leq\nabla a$. 
        \end{enumerate}
\end{definition}

\section{Spectral duality for S4 De Morgan algebras}
We first obtain duality results for $\mathbf{S4DM}$ by means of spectral spaces by an appropriate adaptation of the Priestley-style duality developed by Bimb\'o \cite{bimbo} for $\mathbf{DM}$ under the isomorphism between $\mathbf{Priest}$ and $\mathbf{Spec}$. 
\subsection{S4 De Morgan Spectral Spaces}
If $X$ is a topological space, let ${K}(X)$ denote the collection of compact subsets of $X$, let  $O(X)$ denote the collection of open subsets of $X$, and let $\Omega(X)=K(X)\cap O(X)$.  

 Recall that a topological space $X$ is \emph{coherent} if $\Omega(X)$ is closed under finite intersections and $\Omega(X)$ forms a basis for $X$. Moreover, recall that $X$ is \emph{sober} if every completely prime filter in $\Omega(X)$ is of the form $\Omega_X(x)$ for some $x\in X$ where: \[\Omega_X(x)=\{U\in\Omega(X):x\in U\}.\] 
 
 In any topological space $X$, one can define a quasi-order $\leqslant$ on $X$, known as the \emph{specialization order} of $X$, by $x\leqslant y$ iff $x\in U$ implies $y\in U$ for all $U\in O(X)$. If $X$ is a $T_0$-space, then $\leqslant$ is in addition anti-symmetric and hence a partial order on $X$.
 \begin{definition}
     A topological space $X$ is a \emph{spectral space} if: 
     \begin{enumerate}
     \item $X$ is a compact space; 
         \item $X$ is a $T_0$-space; 
         \item $X$ is a coherent space; 
         \item $X$ is a sober space. 
     \end{enumerate}
 \end{definition}

 As each spectral space is a $T_0$-space, the specialization order of a spectral space is a partial order. If $X$ and $Y$ are spectral spaces, a map $f\colon X\to Y$ is a \emph{spectral map} if $f^{-1}[U]\in \Omega(X)$ for each $U\in \Omega(Y)$.

\begin{definition}\label{de morgan frame}
    An \emph{S4 De Morgan frame} is a quadruple $\langle X;\leq,g,R\rangle$ such that:
    \begin{enumerate}
        \item $\langle X;\leq\rangle$ is a poset; 
        \item $g\colon X\to X$ is an order-inverting involution; 
        \item $R\subseteq X\times X$ is reflexive and transitive.
    \end{enumerate}
\end{definition}
\noindent If $\langle X;\leq,g\rangle$ is a De Morgan frame, define $^*,\nabla_R\colon\wp(X)\to\wp(X)$ by:  
\[U^*=\{x\in X:g(x)\not\in U\}, \hspace{.2cm}\nabla_RU=R[U].\]

\begin{definition}\label{modal de morgan spectral space}
    An \emph{S4 De Morgan spectral space} is a relational topological space $\langle X;g,R,\tau\rangle$ satisfying the following conditions: 
    \begin{enumerate}
        \item $\langle X;\tau\rangle$ is a spectral space with specialization order $\leqslant$;   
        \item $\langle X;\leqslant,g\rangle$ is an S4 De Morgan frame;   
         \item if $U\in \Omega(X)$, then $U^*\in\Omega(X)$ and $\nabla_RU\in \Omega(X)$; 
         \item if $x\overline{R}y$, there exists $U\in\Omega(X)$ such that $x\in U$ and $y\not\in\nabla_RU$.    
    \end{enumerate}
\end{definition}
\begin{lemma}\label{separation property}
    If $X$ is an S4 De Morgan spectral space, then for all $x,y\in X$, if $x\not\leqslant y$, there exists $U\in\Omega(X)$ such that $x\in U$ and $y\not\in U$. 
\end{lemma}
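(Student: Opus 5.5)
We argue directly from the definition of the specialization order together with coherence; neither $g$ nor $R$ plays any role. Suppose $x\not\leqslant y$. By the definition of $\leqslant$, there is an open set $V\in O(X)$ with $x\in V$ and $y\notin V$. The aim is to shrink $V$ to a compact open set that still contains $x$, using the fact that $\Omega(X)$ is a basis.

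The spectral space $\langle X;\tau\rangle$ is coherent by condition (1) of Definition \ref{modal de morgan spectral space}, so $\Omega(X)$ forms a basis for the topology $\tau$. Since $V$ is open and $x\in V$, there is some $U\in\Omega(X)$ with $x\in U\subseteq V$. As $y\notin V$ and $U\subseteq V$, we get $y\notin U$, which gives the required set.

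No real obstacle arises. The only point to check is that the specialization order is defined by quantifying over all open sets, so the witness $V$ need not be compact; this is exactly why the basis property of $\Omega(X)$ is needed. An equivalent way to put the argument: an element of $O(X)$ is a union of members of $\Omega(X)$, so if every member of $\Omega(X)$ containing $x$ also contained $y$, then every open set containing $x$ would contain $y$, i.e. $x\leqslant y$.
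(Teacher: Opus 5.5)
Your proof is correct and follows the same route as the paper: take an open set witnessing $x\not\leqslant y$, then use coherence ($\Omega(X)$ is a basis) to get a compact open $U$ with $x\in U$ inside that open set, so $y\notin U$. Your direct basis formulation is slightly cleaner than the paper's, which writes the open set as a finite union $\bigcup_{i=1}^n V_i$ of compact opens — this need not hold for a non-compact open set, though finiteness plays no role in the argument.
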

\begin{proof}
    Assume that $x\not\leqslant y$. Then there exists some $U\in O(X)$ such that $x\in U$ but $y\not\in U$. Since $X$ is a coherent space, $\Omega(X)$ forms a basis for $X$ and hence: 
    \[U=\bigcup^n_{i=1}V_i\hspace{.2cm}\text{for $V_1,\dots, V_n\in\Omega(X)$}.\] Therefore we have that $x\in V_i$ for some $i\in\{1,\dots, n\}$ where $V_i\in\Omega(X)$ however $y\not\in V_i$ for each $i\in\{1,\dots,n\}$.  
\end{proof}
\subsection{Topological Representation of S4 De Morgan Algebras}  
Recall that for a bounded lattice $A$:
\begin{itemize}
    \item a non-zero element $a\in A$ is an \emph{atom} if there is no $b\in A$ such that $0<b<a$; 
    \item a non-top element $a\in A$ is a \emph{coatom} if there is no $b\in A$ such that $a<b<1$.
\end{itemize}
 We call a non-empty subset $x\subseteq A$ is a \emph{filter} provided:
\begin{itemize}
    \item $a\in x$ and $b\in x$ implies $a\wedge b\in x$; 
    \item $a\in x$ and $a\leq b$ implies $b\in x$.
\end{itemize}
 Moreover, if $x\subseteq A$ is a filter, then:
 \begin{itemize}
     \item $x$ is a \emph{proper filter} if $x\not=A$, i.e., $0\not\in x$; 
     \item $x$ is a \emph{prime filter} if $x$ is a proper and $a\vee b\in x$ implies $a\in x$ or $b\in x$. 
 \end{itemize}
 Dually, a non-empty subset $x\subseteq A$ is an \emph{ideal} provided: 
 \begin{itemize}
     \item $a\in x$ and $b\in x$ implies $a\vee b\in x$; 
     \item $a\in x$ and $b\leq a$ implies $b\in x$. 
 \end{itemize}
 Moreover, if $x\subseteq A$ is an ideal, then: 
 \begin{itemize}
     \item $x$ is a \emph{proper ideal} if $x\not=A$, i.e., $1\not\in x$; 
     \item $x$ is a \emph{prime ideal} if $x$ is proper and $a\wedge b$ implies $a\in x$ or $b\in x$. 
 \end{itemize}
 Now take any $a\in A$ and consider the following:
 \[{\uparrow}a=\{b\in A:a\leq b\},\hspace{.4cm}{\downarrow}a=\{b\in A:b\leq a\}\]
 It is easy to see that ${\uparrow}a$ is a filter, known as the \emph{principal filter} generated by $a$ and that ${\downarrow}a$ is an ideal, known as the \emph{principal ideal} generated by $a$. Clearly ${\uparrow}a$ (resp. ${\downarrow}a$) is a proper filter (resp. proper ideal) if $a\not=0$ (resp. $a\not=1$) and is moreover a prime filter (resp. prime ideal) if $a$ is an atom (resp. a coatom).   

 The following is well-known Prime Filter Theorem for distributive lattices and is essential for compactness in Lemma \ref{lemma 3.8} as well as for the construction of the isomorphism in the proof of Theorem \ref{rep theorem}.  
 \begin{theorem}
     Let $A$ be a districutive lattice, let $x\subseteq A$ be a filter, and let $y\subseteq A$ be an ideal such that $x\cap y=\emptyset$. Then there exists a prime filter $x_p\subseteq A$ such that $x\subseteq x_p$ with $y\cap x_p=\emptyset$. 
 \end{theorem}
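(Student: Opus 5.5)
The plan is to prove the Prime Filter Theorem by Zorn's Lemma. Let $\mathcal{F}$ be the collection of all filters $z\subseteq A$ such that $x\subseteq z$ and $z\cap y=\emptyset$, partially ordered by inclusion. The collection $\mathcal{F}$ is non-empty because $x\in\mathcal{F}$. Take a non-empty chain $\mathcal{C}\subseteq\mathcal{F}$. Its union $\bigcup\mathcal{C}$ is non-empty and upward closed. It is closed under meets, since any two elements lie in a common member of the chain. It contains $x$ and is disjoint from $y$. So it is an upper bound of $\mathcal{C}$ in $\mathcal{F}$, and Zorn's Lemma gives a maximal element $x_p\in\mathcal{F}$.

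Next I check that $x_p$ is proper. Since $y$ is an ideal, it is non-empty. Pick $b\in y$; then $0\leq b$ gives $0\in y$. As $x_p\cap y=\emptyset$, we get $0\notin x_p$, so $x_p\neq A$.

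The main step is primeness, and it is the only place distributivity is used. Suppose $a\vee b\in x_p$ but $a\notin x_p$ and $b\notin x_p$. For $c\in A$, write $x_p[c]=\{d\in A: e\wedge c\leq d \text{ for some } e\in x_p\}$ for the filter generated by $x_p\cup\{c\}$. One checks routinely that this set is a filter containing $x_p$ and $c$. Since $a\notin x_p$, the filter $x_p[a]$ properly extends $x_p$, so by maximality $x_p[a]\notin\mathcal{F}$. Hence $x_p[a]$ meets $y$: there are $e_1\in x_p$ and $i_1\in y$ with $e_1\wedge a\leq i_1$. In the same way, there are $e_2\in x_p$ and $i_2\in y$ with $e_2\wedge b\leq i_2$.

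Now put $e=e_1\wedge e_2\in x_p$. By distributivity,
\[e\wedge(a\vee b)=(e\wedge a)\vee(e\wedge b)\leq i_1\vee i_2.\]
The left-hand side lies in $x_p$ because $x_p$ is a filter. So $i_1\vee i_2\in x_p$ by upward closure. But $i_1\vee i_2\in y$ because $y$ is an ideal. This contradicts $x_p\cap y=\emptyset$, so $x_p$ is prime.

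The only real obstacle is this primeness argument: describing the generated filter correctly and combining the two witnesses through $e_1\wedge e_2$ so that distributivity applies. Everything else is routine bookkeeping.
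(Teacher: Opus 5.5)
Your proof is correct. It is the standard Zorn's Lemma argument: take a maximal filter containing $x$ and disjoint from $y$, then get primeness by combining the witnesses from $x_p[a]$ and $x_p[b]$ through $e_1\wedge e_2$ and distributivity. The paper gives no proof of its own to compare against; it simply cites this as the well-known Prime Filter Theorem.

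One small tidy-up: the statement is for a distributive lattice not assumed to be bounded. In that generality, get properness directly: $b\in y$ and $x_p\cap y=\emptyset$ already give $b\notin x_p$, so $x_p\neq A$. This avoids passing through $0\leq b$. Your detour is harmless in the paper's bounded setting, where "proper" is defined as $0\notin x$.
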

\begin{definition}\label{prime spectrum}
    Let $A$ be an S4 De Morgan algebra. The \emph{prime spectrum} of $A$ is a relational topological space $\mathcal{S}_0(A)=\langle\mathfrak{P}(A); g_A,R_A,\tau(\beta)\rangle$ such that: 
    \begin{enumerate}
        \item $\mathfrak{P}(A)$ is the collection of all prime filters of $A$; 
        \item $g_A(x)=\{a\in A:-a\not\in x\}$; 
        \item $xR_Ay$ iff $a\in x$ implies $\nabla a\in y$; 
        \item $\tau(\beta)$ is the topology on $\mathfrak{P}(A)$ generated by the basis $\beta$ where: 
        \[\beta=\bigcup_{a\in A}\phi(a)\hspace{.2cm}\text{with}\hspace{.2cm}\phi(a)=\{x\in\mathfrak{P}(A):a\in x\}.\]
    \end{enumerate}
\end{definition}
\begin{remark}
    Note that in the Priestley-style topological representation of a distributive lattice $A$, one takes the prime filters $\mathfrak{P}(A)$ and forms a Priestley space by equipping to $\mathfrak{P}(A)$ the topology $\tau(\sigma)$ generated by the subbasis: 
    \[\sigma=\bigcup_{a\in A}\{\phi(a),\hspace{.1cm}\mathfrak{P}(A)\setminus\phi(a)\}.\] In the literature on spectral spaces, topologies of the form $\tau(\sigma)$ are often refereed to as the patch topologies whereas topologies of the form $\tau(\beta)$ are refereed to as constructible or spectral topologies (see \cite{dic} for more details). 
\end{remark}
\begin{lemma}\label{lemma 3.8}
    If $A$ is an S4 De Morgan algebra, then $\langle\mathfrak{P}(A);\tau(\beta)\rangle$ is a spectral space whose specialization order is given by set-theoretic inclusion.  
\end{lemma}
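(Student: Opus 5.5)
We verify the four conditions of a spectral space for $\langle\mathfrak{P}(A);\tau(\beta)\rangle$ directly, using only the bounded distributive lattice reduct of $A$. The De Morgan complement and $\nabla$ play no role here. The basic identities come first: for all $a,b\in A$,
\[\phi(a\wedge b)=\phi(a)\cap\phi(b),\qquad \phi(a\vee b)=\phi(a)\cup\phi(b),\qquad \phi(0)=\emptyset,\qquad \phi(1)=\mathfrak{P}(A).\]
These follow from the filter axioms and primeness. In particular $\beta$ is closed under finite intersections and covers $\mathfrak{P}(A)$, so it is indeed a basis.

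The central step is to show that $\Omega(\mathfrak{P}(A))=\{\phi(a):a\in A\}$, and this compactness claim is where the Prime Filter Theorem enters.

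\begin{enumerate}
\item \emph{Each $\phi(a)$ is compact.} Since $\beta$ is a basis, it suffices to consider a cover $\phi(a)\subseteq\bigcup_{i\in I}\phi(b_i)$ by basic opens. Suppose no finite subfamily covers $\phi(a)$. Let $y$ be the ideal generated by $\{b_i:i\in I\}$. Then $a\notin y$: otherwise $a\le b_{i_1}\vee\dots\vee b_{i_n}$, which gives $\phi(a)\subseteq\phi(b_{i_1})\cup\dots\cup\phi(b_{i_n})$. The Prime Filter Theorem applied to ${\uparrow}a$ and $y$ then yields a prime filter $x$ with $x\in\phi(a)$ and $x\notin\phi(b_i)$ for every $i$, a contradiction.
\item \emph{Every compact open set is some $\phi(a)$.} A compact open set is a union of basic opens, hence a finite union $\phi(a_1)\cup\dots\cup\phi(a_n)=\phi(a_1\vee\dots\vee a_n)$.
\end{enumerate}

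Compactness of the whole space is the case $\phi(1)$. Coherence follows because $\Omega$ equals $\beta$, which is a basis closed under finite intersections.

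For the specialization order, observe that $x\leqslant y$ iff every basic open containing $x$ contains $y$, iff $a\in x$ implies $a\in y$, iff $x\subseteq y$. Antisymmetry of $\subseteq$ then gives $T_0$.

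For sobriety, take a completely prime filter $\mathcal{F}$ of $\Omega(\mathfrak{P}(A))$ and set $x=\{a\in A:\phi(a)\in\mathcal{F}\}$. By the identities for $\phi$, together with properness and primeness of $\mathcal{F}$, $x$ is a prime filter of $A$. We then check that $\Omega_{\mathfrak{P}(A)}(x)=\mathcal{F}$: for $U=\phi(a)$ we have $x\in\phi(a)$ iff $a\in x$ iff $\phi(a)\in\mathcal{F}$. One point needs care here: $\phi$ may not be injective a priori. This is harmless, because $\phi(a)=\phi(b)$ forces $a=b$ by the Prime Filter Theorem, so membership is well defined.

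The main obstacle is the compactness argument in item 1. Every other step reduces to routine set algebra once $\Omega(\mathfrak{P}(A))=\{\phi(a):a\in A\}$ is established.
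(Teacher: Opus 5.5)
Your proof is correct. Its skeleton matches the explicit parts of the paper's argument: the identities $\phi(a\wedge b)=\phi(a)\cap\phi(b)$ and $\phi(a\vee b)=\phi(a)\cup\phi(b)$, the filter $x=\{a:\phi(a)\in\mathcal{F}\}$ for sobriety, and reading off the specialization order from basic opens.

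The real difference is compactness. The paper cites Stone for the spectral-space claim and never proves that basic opens are compact. You prove it directly from the Prime Filter Theorem and then derive everything else from $\Omega(\mathfrak{P}(A))=\{\phi(a):a\in A\}$.

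This buys you a good deal. The paper's coherence computation writes $U\cap V=\bigcup_{i,k}\phi(a_i\wedge b_k)$ and concludes membership in $\Omega$ only because $\beta$ is a basis. That shows openness, not compactness. Its sobriety argument passes through joins $\bigvee_{i\in I}a_i$ in $A$, which need not exist and which $\phi$ does not in general preserve. It also assumes $T_0$ to identify the specialization order, whereas you deduce $T_0$ from that order.

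Your characterization of $\Omega(\mathfrak{P}(A))$ reduces coherence and sobriety to finite lattice identities. It also supplies the fact, later invoked in Theorem \ref{rep theorem}, that every compact open set has the form $\phi(a)$. The paper's route is shorter only because it leans on the citation.

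One minor point: your caveat about injectivity of $\phi$ is unnecessary. For any $a$ with $U=\phi(a)$, you have $x\in U$ iff $a\in x$ iff $U\in\mathcal{F}$ directly from the definition of $x$, so no well-definedness issue arises.
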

\begin{proof}
Since $A$ has a distributive lattice reduct, the  proof that $\langle\mathfrak{P}(A);\tau(\beta)\rangle$ is a spectral space follows from \cite{stone2} however we give explicit proofs that $\langle\mathfrak{P}(A);\tau(\beta)\rangle$ is a coherent sober space whose specialization order is given by set-theoretic inclusion. To see that $\langle\mathfrak{P}(A);\tau(\beta)\rangle$ is a coherent space, it remains to show that $\Omega(X_A)$ is closed under the formation of finite intersections. Therefore let $U=\bigcup_{i\in I}\phi(a_i)$ and $V=\bigcup_{i\in I}\phi(b_k)$ for $U,V\in \Omega(X_A)$. Then we have: 
  \begin{align*}
      U\cap V&=\bigcup_{i\in I,k\in K}(\phi(a_i)\cap\phi(b_k))
      \\&=\bigcup_{i\in I,k\in K}\{x\in\mathfrak{P}(A):a_i\in x\}\cap\{x\in\mathfrak{P}(A):b_k\in x\}
      \\&=\bigcup_{i\in I,k\in K}\{x\in\mathfrak{P}(A):a_i\in x\hspace{.1cm}\&\hspace{.1cm} b_k\in x\}
      \\&=\bigcup_{i\in I,k\in K}\{x\in\mathfrak{P}(A):a_i\wedge b_k\in x\}
      \\&=\bigcup_{i\in I,k\in K}\phi(a_i\wedge b_k).
  \end{align*}
Since $\beta=\bigcup_{a\in A}\phi(a)$ is a basis for $\mathcal{S}_0(A)$, it follows that  $\bigcup_{i\in I,k\in K}\phi(a_i\wedge b_k)=U\cap V\in\Omega(X_A)$ and thus $\mathcal{S}_0(A)$ is a coherent space.

    For sobriety, it suffices to show that every completely prime filter $x_p$ in $\Omega(X_A)$ is of the form $\Omega_{\mathfrak{P}(A)}(x)$ for some $x\in\mathfrak{P}(X_A)$ where $\Omega_{\mathfrak{P}(A)}(x)=\{U\in\Omega(X_A):x\in U\}$. Hence let $x$ be the filter generated by $\{a\in A:\phi(a)\in x_p\}$. We first verify that $x$ is a completely prime filter in $A$. Let $a,b\in x$, so that $\phi(a),\phi(b)\in x_p$. Since $x_p$ is a filter, it follows that $\phi(a)\cap\phi(b)\in x$ but we have already seen in the proof of coherence of $\langle\mathfrak{F}(A);\tau(\beta)\rangle$ that $\phi(a)\cap\phi(b)=\phi(a\wedge b)$ and hence $\phi(a\wedge b)\in x_p$, so $a\wedge b\in x$. Therefore $x$ is closed under finite meets. Now suppose $a\in x$ so that $\phi(a)\in x_p$ and observe that for any $y\in \phi(a)$, we have $a\in y$ so $b\in y$ since $a\leq b$ and hence $y\in\phi(b)$, hence $\phi(a)\subseteq\phi(b)$. Since $x_p$ is upwards closed, we have $\phi(b)\in x_p$ and thus $b\in x$ and hence $x$ is upward closed. To see that $x$ is a prime filter, assume $a\vee b\in x$ and hence $\phi(a\vee b)\in x_p$. However, we have: 
\begin{align*}
    \phi(a\vee b)&=\{x\in\mathfrak{P}(A):a\vee b\in x\}
    =\{x\in\mathfrak{P}(A):a\in x\hspace{.2cm}\text{or}\hspace{.2cm}b\in x\}
    \\&=\{x\in\mathfrak{P}(A):a\in x\}\cup\{x\in\mathfrak{P}(A):b\in x\}=\phi(a)\cup\phi(b).
\end{align*}
Therefore we have $\phi(a)\cup\phi(b)\in x_p$. Since $x_p$ is a prime filter, it follows that either $\phi(a)\in x_p$, in which case $a\in x$, or $\phi(b)\in x_p$, in which case $b\in x$. To see that $x$ is completely prime, suppose $\bigvee_{i\in I}a_i\in x$ so that $\phi(\bigvee_{i\in I}a_i)\in x_p$. Since $x_p$ is a completely prime filter, the construction of the topology $\tau(\beta)$ and the fact that $\phi$ is monotone and a homomorphism for finite joins yields: 
\[\phi\Biggl(\bigvee_{i\in I}a_i\Biggl)=\bigcup_{i\in I}\phi(a_i)\] and hence $\bigcup_{i\in I}\phi(a_i)\in x_p$. Since $x_p$ is completely prime, it follows that $\phi(a_i)\in x_p$ for some $i\in I$ and hence $a_i\in x$ for some $i\in I$. Hence we conclude that $x$ is a completely prime filter. We must now show that $x_p=\Omega_{\mathfrak{P}(A)}(x)$. The $\Omega_{\mathfrak{P}(A)}(x)\subseteq x_p$ inclusion is trivial by virtue of our construction of $x$. For the $x_p\subseteq\Omega_{\mathfrak{P}(A)}(x)$ inclusion, let $U=\bigcup_{i\in I}\phi(a_i)\in x_p$. Since $x_p$ is a completely prime filter, there exists $a_i\in A$ such that $\phi(a_i)\in x_p$ and hence $a_i\in x$ and thus $x\in\phi(a_i)$. Thus $\phi(a_i)\in\Omega_{\mathfrak{P}(A)}(x)$ so $U\in \Omega_{\mathfrak{P}(A)}(x)$. To see that $\subseteq$ is the specialization order of $X_A$, observe that since $X_A$ is a $T_0$-space, $x\not\subseteq y$ implies $x\not\leqslant y$. Now, if $x\subseteq y$, then for any basic open set $\phi(a)$ such that $x\in\phi(a)$, we have $a\in x$ so $a\in y$ and hence $y\in\phi(a)$ so $x\leqslant y$.   
\end{proof}

\begin{lemma}\label{lemma 3.9}
    If $A$ is an S4 De Morgan algebra, then $\langle\mathfrak{P}(A);\subseteq,R_A\rangle$ is an S4 De Morgan frame. 
\end{lemma}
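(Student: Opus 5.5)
The plan is to check the three clauses of Definition~\ref{de morgan frame} in turn for $\langle\mathfrak{P}(A);\subseteq,g_A,R_A\rangle$. The statement omits $g_A$, but $g_A$ is needed for the frame to make sense, so it is included. Clause (1) is immediate, since set inclusion partially orders $\mathfrak{P}(A)$. For clause (3), reflexivity of $R_A$ follows from axiom (3) of S4 De Morgan algebras. If $a\in x$, then $a\leq\nabla a$ and $x$ is upward closed, so $\nabla a\in x$, i.e.\ $xR_Ax$.

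Transitivity of $R_A$ uses axiom (4). Suppose $xR_Ay$ and $yR_Az$, and let $a\in x$. Then $\nabla a\in y$, hence $\nabla\nabla a\in z$. Since $\nabla\nabla a\leq\nabla a$ and $z$ is upward closed, $\nabla a\in z$, so $xR_Az$.

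Clause (2) is the substantive part. First I will show that $g_A(x)=\{a:-a\notin x\}$ is again a prime filter; this is the standard argument from Bimb\'o's involution-space duality.
\begin{itemize}
\item Upward closure: if $a\in g_A(x)$ and $a\leq b$, then $-b\leq-a$. Since $-a\notin x$ and $x$ is an upset, $-b\notin x$.
\item Closure under meets: if $-a,-b\notin x$, then $-(a\wedge b)=-a\vee-b\notin x$, by primeness of $x$.
\item Primeness: if $-(a\vee b)=-a\wedge-b\notin x$, then one of $-a,-b$ is not in $x$, because $x$ is a filter.
\item Properness: $-0=1\in x$, so $0\notin g_A(x)$.
\item Non-emptiness: $-1=0\notin x$, so $1\in g_A(x)$.
\end{itemize}

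Next I will check that $g_A$ is order-inverting. If $x\subseteq y$ and $-a\notin y$, then $-a\notin x$, so $g_A(y)\subseteq g_A(x)$. Finally, $g_A$ is an involution. We have $a\in g_A(g_A(x))$ iff $-a\notin g_A(x)$ iff $--a\in x$ iff $a\in x$, using $--a=a$.

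The only point needing any care is the well-definedness of $g_A$, i.e.\ that $g_A(x)$ is a prime filter. There, the De Morgan laws from the Remark following the definition of De Morgan lattices must be applied in the right direction: primeness of $x$ yields meet-closure of $g_A(x)$, and the filter property of $x$ yields primeness of $g_A(x)$. Everything else is a direct unwinding of the definitions.
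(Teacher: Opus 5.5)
Your proposal is correct and follows the same route as the paper. You show $g_A(x)$ is a prime filter via the De Morgan laws (primeness of $x$ for meet-closure, the filter property of $x$ for primeness), check that $g_A$ is an order-inverting involution, and get reflexivity and transitivity of $R_A$ from $a\leq\nabla a$ and $\nabla\nabla a\leq\nabla a$, additionally verifying properness and non-emptiness of $g_A(x)$ via $-0=1$ and $-1=0$, which the paper's proof leaves implicit.
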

\begin{proof}

We first verify that $g_A\colon\mathfrak{P}(X)\to\mathfrak{P}(A)$ is a well-defined function in the sense that $g_A(x)\in\mathfrak{P}(A)$ for any $x\in\mathfrak{P}(A)$. Hence let $x\in\mathfrak{P}(A)$ be arbitrary and suppose that $a,b\in g_A(x)$ so that $-a\not\in x$ and $-b\not\in x$. Now assume for the sake of contradiction that $a\wedge b\not\in g_A(x)$. Then $-(a\wedge b)\in x$ and since $-(a\wedge b)=-a\vee -b$, we have $-a\vee-b\in x$. Since $x$ is prime, either $-a\in x$ or $-b\in x$. Since either case contradicts our hypothesis that $-a\not\in x$ and $\-b\not\in x$, we conclude $-(a\wedge b)\not\in x$ so $a\wedge b\in g_A(x)$. Therefore $g_A(x)$ is closed under finite meets. To see that $g_A(x)$ is upward closed, take $a\in g_A(x)$ with $a\leq b$. Then $-a\not\in x$ and $-b\leq-a$ so we have $-b\not\in x$ which implies $b\in g_A(x)$. To see that $g_A(x)$ is prime, assume that $a\vee b\in g_A(x)$ so $-(a\vee b)=-a\wedge-b\not\in x$. Now assume for the sake of contradiction that $a\not\in g_A(x)$ and $b\not\in g_A(x)$. Then $-a\in x$ and $-b\in x$ and since $x$ is a filter, $-a\wedge-b\in x$, which contradicts our hypothesis. Thus either $a\in g_A(x)$ or $b\in g_A(x)$.  

     We now verify that $g_A$ is an order-inverting involution. Let $x,y\in\mathfrak{P}(A)$ such that $x\subseteq y$ and take $a\in g_A(y)$. Then $-a\not\in y$ and hence $-a\not\in x$ which implies $a\in g_A(a)$ so $g_A(y)\subseteq g_A(x)$. To see that $g_A$ is an involution, we have: 
\begin{align*}
    g_A(g_A(x))&=\{a\in A:-a\not\in g_A(x)\}=\{a\in A:--a\in x\}=\{a\in A:a\in x\}=x.
\end{align*}
     We now verify that $R_A$ is reflexive and transitive. For reflexivity, observe that $a\leq\nabla a$ for any $a\in A$ and hence for any $x\in\mathfrak{P}(A)$, if $a\in x$ then $\nabla a\in x$ since $x$ is upward closed, so $xR_Ax$ for any $x\in\mathfrak{P}(A)$. For transitivity, assume that $xR_Ay$ and $yR_Az$ and choose any $a\in x$. The former assumption yields $\nabla a\in y$ and the latter gives $\nabla\nabla a\in z$ but $\nabla\nabla a\leq\nabla a$ so $\nabla a\in z$ and hence $xR_Az$.  
\end{proof}
\begin{remark}
    As noted in \cite{bimbo}, if we assume that $x$ is merely a filter (i.e., not a prime filter), then $g_A(x)$ is not a filter. Moreover, if $\langle A;\wedge,\vee,-,0,1\rangle$ is a Boolean algebra, then $g_A$ defines the identity map on ultrafilters of $A$.    
\end{remark}
\begin{lemma}\label{lemma 3.11}
    If $A$ is an S4 De Morgan algebra, then its prime spectrum $\mathcal{S}_0(A)$ is an S4 De Morgan spectral space. 
\end{lemma}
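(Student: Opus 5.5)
We verify the four clauses of Definition \ref{modal de morgan spectral space} for $\mathcal{S}_0(A)$ in turn. Clauses (1) and (2) are already available: Lemma \ref{lemma 3.8} shows that $\langle\mathfrak{P}(A);\tau(\beta)\rangle$ is a spectral space whose specialization order is $\subseteq$. Lemma \ref{lemma 3.9} shows that $g_A$ is an order-inverting involution on $\langle\mathfrak{P}(A);\subseteq\rangle$ and that $R_A$ is reflexive and transitive. So $\langle\mathfrak{P}(A);\subseteq,g_A,R_A\rangle$ is an S4 De Morgan frame. The real work lies in (3) and (4).

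\textbf{Step 1: identify $\Omega(\mathfrak{P}(A))$.} We show that the compact open sets are exactly the sets $\phi(a)$, $a\in A$.
\begin{itemize}
\item Each $\phi(a)$ is open by construction.
\item $\phi(a)$ is compact. Suppose $\phi(a)\subseteq\bigcup_{i\in I}\phi(a_i)$ but no finite subcover exists. Then the filter ${\uparrow}a$ is disjoint from the ideal generated by $\{a_i\}_{i\in I}$. The Prime Filter Theorem gives a prime filter containing $a$ and no $a_i$, a contradiction.
\item Conversely, a compact open $U$ is a union of basic sets $\phi(a_i)$, hence a finite union. Since $\phi$ preserves finite joins, $U=\phi(a_1\vee\dots\vee a_n)$.
\end{itemize}
Both (3) and (4) then reduce to statements about the sets $\phi(a)$.

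\textbf{Step 2: clause (3).} For $U=\phi(a)$ we compute
\[\phi(a)^*=\{x: g_A(x)\notin\phi(a)\}=\{x:a\notin g_A(x)\}=\{x:-a\in x\}=\phi(-a),\]
which is compact open.

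For $\nabla_{R_A}$ we aim to show $R_A[\phi(a)]=\phi(\nabla a)$.
\begin{itemize}
\item The inclusion $\subseteq$ is immediate: if $x\in\phi(a)$ and $xR_Ay$, then $\nabla a\in y$.
\item For $\supseteq$, let $\nabla a\in y$. We need a prime $x\ni a$ with $xR_Ay$, i.e. $\nabla b\in y$ for every $b\in x$. Equivalently, $x$ must avoid $I=\{b:\nabla b\notin y\}$.
\item $I$ is an ideal: it is a down-set because $\nabla$ is monotone (monotonicity follows from join preservation). It is closed under joins because $\nabla(b\vee c)=\nabla b\vee\nabla c$ and $y$ is prime. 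It is nonempty since $\nabla 0=0\notin y$.
\item $a\notin I$ by hypothesis, so ${\uparrow}a\cap I=\emptyset$. The Prime Filter Theorem then supplies the required $x$.
\end{itemize}
If $I=A$ the situation is vacuous; in fact $I\neq A$ precisely because $a\notin I$.

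\textbf{Step 3: clause (4).} Suppose $x\overline{R_A}y$. Then there is $a\in x$ with $\nabla a\notin y$. Taking $U=\phi(a)$ gives $x\in U$, and by Step 2, $y\notin\phi(\nabla a)=\nabla_{R_A}U$.

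I expect the main obstacle to be the $\supseteq$ inclusion $\phi(\nabla a)\subseteq R_A[\phi(a)]$ in Step 2, which needs the Prime Filter Theorem applied to the ideal $I$. The compactness of $\phi(a)$ in Step 1 requires the same kind of argument.
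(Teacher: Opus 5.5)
Your proof is correct, and its skeleton matches the paper's. Clauses (1)--(2) come from Lemmas \ref{lemma 3.8} and \ref{lemma 3.9}, then come the computation $\phi(a)^*=\phi(-a)$ and the identity $\nabla_{R_A}\phi(a)=\phi(\nabla a)$, and clause (4) is read off from $U=\phi(a)$.

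You genuinely diverge at the inclusion $\phi(\nabla a)\subseteq\nabla_{R_A}\phi(a)$. The paper argues by contradiction. From $x\notin\nabla_{R_A}\phi(a)$ it obtains ``for every prime $y$, $y\notin\phi(a)$ or $y\overline{R_A}x$''. It then splits into two cases: ``every $y\notin\phi(a)$'' (forcing $a=0$, so $\nabla a=0\in x$) and ``every $y\overline{R_A}x$'' (against reflexivity). That split is a quantifier slip, since $\forall y(\neg P(y)\vee\neg Q(y))$ does not yield $\forall y\,\neg P(y)$ or $\forall y\,\neg Q(y)$. Reflexivity could not close the gap anyway, because $\nabla a\in x$ does not place $x$ itself in $\phi(a)$. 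The case that actually matters is where some prime filters are $R_A$-related to the given point but none of them contains $a$. Your argument rules out exactly this case. The set $I=\{b:\nabla b\notin y\}$ is an ideal, by monotonicity and additivity of $\nabla$, $\nabla 0=0$, and primeness of $y$. It misses ${\uparrow}a$, so the Prime Filter Theorem supplies a prime $x\ni a$ with $xR_Ay$. This is the standard J\'onsson--Tarski step, so your route gives a valid proof of this inclusion where the paper's argument does not.

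Your Step 1 is also a real improvement. The paper checks clause (3) only on sets of the form $\phi(a)$. It relies on the citation to Stone both for the compactness of each $\phi(a)$ and for the fact that every compact open set is some $\phi(a)$. You prove both, which is exactly what justifies restricting clause (3) to the sets $\phi(a)$.
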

\begin{proof}
    By Lemmas \ref{lemma 3.8} and \ref{lemma 3.9}, it remains to verify that conditions 3 and 4 of Definition \ref{modal de morgan spectral space} are satisfied. For condition 3, take any $a\in A$ and observe that: 
    \begin{align*}
\phi(a)^*&=\{x\in\mathfrak{P}(A):g_A(x)\not\in\phi(a)\}\\&=\{x\in\mathfrak{P}(A):a\not\in g_a(x)\}
\\&=\{x\in\mathfrak{P}(A):-a\in x\}\\&=\phi(-a)
\end{align*}
where $\phi(-a)\in\Omega(\mathcal{S}_0(A))$ so $\phi(a)^*\in\Omega(\mathcal{S}_0(A))$.

Now choose any $x\in\nabla_{R_S}\phi(a)$ so that $yR_Ax$ for some $y\in\phi(a)$. The latter gives $a\in y$ and which together with $yR_Ax$ implies $\nabla a\in x$ so $x\in\phi(a)$ and thus $\nabla_{R_A}\phi(a)\subseteq\phi(\nabla_A)$. Now assume that $x\in\phi(a)$ but $x\not\in \nabla_{R_A}\phi(a)$. The former gives $\nabla a\in x$ and the latter implies that for all $y\in\mathfrak{P}(A)$, either $y\not\in\phi(a)$ or $y\overline{R_A}x$. If $y\not\in\phi(a)$ for all $y\in\mathfrak{P}(A)$, then $a=0$ so $\nabla a=\nabla 0=0$ but $\nabla a\in x$ by hypothesis, and hence $0\in x$, which contradicts the fact that $x$ is proper. If on the other hand we have $y\overline{R_A}x$ for all $y\in\mathfrak{P}(A)$, then $x\overline{R_A}x$ but this contradicts the fact that $R$ is reflexive. Thus $x\in\nabla_{R_A}\phi(a)$ so $\phi(\nabla a)\subseteq \nabla_{R_A}\phi(a)$ and hence $\phi(\nabla a)=\nabla_{R_A}\phi(a)$. Hence $\nabla_{R_A}\phi(a)\in\Omega(\mathcal{S}_0(A))$ since $\phi(\nabla a)\in\Omega(\mathcal{S}_0(A))$.

For condition 4, assume that $x\overline{R_A}y$. Then there exists some $a\in A$ such that $a\in x$ but $\nabla a\not\in y$ so $x\in\phi(a)$ and $y\not\in\phi(\nabla a)=\nabla_{R_A}\phi(a)$ where where $\phi(a)\in\Omega(\mathcal{S}_0(A))$. Hence we conclude that $S_0(A)$ is an S4 De Morgan spectral space. 
\end{proof}

\begin{lemma}\label{space to algebra}
    If $X$ is an S4 De Morgan spectral space, then the induced algebra of compact open sets $\mathcal{A}_0(X)=\langle\Omega(X);\cap,\cup,^*,\emptyset,X,\nabla_R\rangle$ is an S4 De Morgan algebra.  
\end{lemma}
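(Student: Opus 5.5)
The plan is to check the axioms in three layers: first that $\langle\Omega(X);\cap,\cup,\emptyset,X\rangle$ is a bounded distributive lattice, then that $^*$ is a De Morgan complement, and finally that $\nabla_R$ satisfies the four S4 conditions. For the lattice layer, $\Omega(X)$ is closed under finite intersections by coherence. It is closed under finite unions because a finite union of compact open sets is compact and open. It contains $\emptyset$ trivially, and it contains $X$ by compactness of the spectral space. Distributivity is inherited from the powerset lattice $\wp(X)$, since all operations are set-theoretic. Closure of $\Omega(X)$ under $^*$ and $\nabla_R$ is exactly condition 3 of Definition \ref{modal de morgan spectral space}, so every operation is well defined on $\Omega(X)$.

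For the De Morgan layer, involutivity comes from $g\circ g=\mathrm{id}$. We have $x\in U^{**}$ iff $g(x)\notin U^*$ iff $g(g(x))\in U$ iff $x\in U$. Antitonicity is immediate: if $U\subseteq V$ and $g(x)\notin V$, then $g(x)\notin U$, so $V^*\subseteq U^*$. (Equivalently, one could verify the De Morgan laws directly via $(U\cup V)^*=U^*\cap V^*$ and $(U\cap V)^*=U^*\cup V^*$, which hold pointwise.) Neither computation uses that $g$ is order-inverting; that hypothesis is only what makes $U^*$ an upset, and this is already guaranteed by condition 3.

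For the modal layer, I will use $\nabla_RU=R[U]=\{y:\exists x\in U,\ xRy\}$, matching the usage in Lemma \ref{lemma 3.11}. Additivity follows from $R[U\cup V]=R[U]\cup R[V]$, since images preserve unions, and $R[\emptyset]=\emptyset$ gives $\nabla_R\emptyset=\emptyset$. Reflexivity of $R$ gives $U\subseteq R[U]$. Transitivity gives $R[R[U]]\subseteq R[U]$: if $xRy$ and $yRz$ with $x\in U$, then $xRz$, so $z\in R[U]$.

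I expect no genuine obstacle. The only delicate point is the bookkeeping that each operation lands back in $\Omega(X)$, and this is handled entirely by coherence, compactness of $X$, and condition 3 rather than by any argument about the relation. Condition 4 and the sobriety assumption are not needed here; they will matter only for the representation theorem.
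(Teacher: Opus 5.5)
Your proposal is correct and follows essentially the same route as the paper. You check the bounded distributive lattice structure (which the paper simply cites from Stone, while you verify it directly from coherence and compactness), closure under $^*$ and $\nabla_R$ via condition 3, involutivity and antitonicity of $^*$ from $g\circ g=\mathrm{id}$, and the four S4 axioms for $\nabla_R$ from images preserving unions and from reflexivity and transitivity of $R$.
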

\begin{proof}
    Since $X$ is a spectral space, it follows by \cite{stone2} that $\langle\Omega(X);\cap,\cup,\emptyset,X\rangle$ is a bounded distributive lattice and since for any $U\in\Omega(X)$ we have $U^*\in\Omega(X)$ and $\nabla_{R}U\in\Omega(X)$ by Definition \ref{modal de morgan spectral space}(3), we know that $\mathcal{A}_0(X)$ is an algebra.

    To see that $^*\colon\Omega(X)\to\Omega(X)$ is an order-inverting involution, take $U,V\in\Omega(X)$ such that $U\subseteq V$ and assume $x\in V^*$. Then $g(x)\not\in V$ so $g(x)\not\in U$ which implies $x\in U^*$ so $V^*\subseteq U^*$. Now choose any $x\in U^{**}$ so that $g(x)\not\in U^*$. Then $g(g(x))\in U$  but $g(g(x))=x$ and hence $x\in U$ so $U^{**}\subseteq U$. Conversely, if $x\in U$, then $g(g(x))\in U$ and hence $g(x)\not\in U^*$ so $x\in U^{**}$, whence $U\subseteq U^{**}$ and thus $U=U^{**}$.

To see that $\nabla_R\colon\Omega(X)\to\Omega(X)$ is additive, observe that $x\in \nabla_R(U\cup V)$ iff there exists $y\in U\cup V$ with $yRx$ iff there exists $y\in U$ with $yRx$ or $y\in V$ with $yRx$ iff $x\in \nabla_RU\cup \nabla_RV$ and hence $\nabla_R(U\cup V)=\nabla_RU\cup\nabla_RV$. The proof that $\nabla_R$ is normal is trivial. To see that $\nabla_R$ is increasing, take $U\in\Omega(X)$ and choose any $x\in U$. Since $R$ is reflexive, we have $xRx$ and hence $x\in \nabla_RU$ so $U\subseteq\nabla_RU$. To see that $\nabla_R$ is idempotent, observe that $\nabla_RU\subseteq\nabla_R\nabla_RU$ follows from the fact that $\nabla_R$ is increasing and hence it remains to verify that $\nabla_R\nabla_RU\subseteq\nabla_RU$. If $x\in\nabla_R\nabla_RU$, then $yRx$ for some $y\in \nabla_RU$ where $zRy$ for some $z\in U$. Since $R$ is transitive, we have $zRx$ and since $z\in U$, we have $x\in\nabla_RU$.     
\end{proof}
\begin{theorem}\label{rep theorem}
    Every S4 De Morgan algebra $A$ is isomorphic to $\mathcal{A}_0(\mathcal{S}_0(A))$. 
\end{theorem}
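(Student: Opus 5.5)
The plan is to show that the Stone map $\phi\colon A\to\Omega(\mathfrak{P}(A))$, $\phi(a)=\{x\in\mathfrak{P}(A):a\in x\}$, is an isomorphism of S4 De Morgan algebras onto $\mathcal{A}_0(\mathcal{S}_0(A))$. Most of the needed identities are already in hand. The proof of Lemma \ref{lemma 3.8} gives $\phi(a\wedge b)=\phi(a)\cap\phi(b)$ and $\phi(a\vee b)=\phi(a)\cup\phi(b)$. The proof of Lemma \ref{lemma 3.11} gives $\phi(-a)=\phi(a)^*$ and $\phi(\nabla a)=\nabla_{R_A}\phi(a)$. The bounds are immediate: no prime filter contains $0$, so $\phi(0)=\emptyset$, and every prime filter contains $1$, so $\phi(1)=\mathfrak{P}(A)$. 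Hence $\phi$ preserves all the operations.

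The first step is to check that $\phi(a)$ really lies in $\Omega(\mathcal{S}_0(A))$. It is open, being a basic set. For compactness I will use the Prime Filter Theorem. Suppose $\phi(a)\subseteq\bigcup_{i\in I}\phi(b_i)$ with no finite subcover. Then $a\not\leq b_{i_1}\vee\dots\vee b_{i_n}$ for every finite choice of indices. So the filter ${\uparrow}a$ is disjoint from the ideal generated by $\{b_i\}_{i\in I}$. The Prime Filter Theorem then gives a prime filter in $\phi(a)$ lying outside every $\phi(b_i)$, which is a contradiction.

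Injectivity follows the same pattern. If $a\not\leq b$, then ${\uparrow}a\cap{\downarrow}b=\emptyset$, so there is a prime filter $x$ with $a\in x$ and $b\notin x$. Thus $\phi(a)\not\subseteq\phi(b)$. This shows that $\phi$ is an order embedding, and in particular injective.

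Surjectivity is the step that needs the most care. Let $U\in\Omega(\mathcal{S}_0(A))$. Since $U$ is open, $U=\bigcup_{i\in I}\phi(a_i)$. Since $U$ is compact, finitely many of these suffice, so $U=\phi(a_1)\cup\dots\cup\phi(a_n)=\phi(a_1\vee\dots\vee a_n)$. When $n=0$ we use $U=\emptyset=\phi(0)$. Therefore $\phi$ is a bijective homomorphism, hence an isomorphism.

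I expect the main obstacle to be the compactness argument. The paper's earlier lemmas only treat $\phi(a)$ as an open set, so compactness has to be established from the Prime Filter Theorem here. That compactness is exactly what drives the surjectivity step.
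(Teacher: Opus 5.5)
Your proposal is correct and follows the paper's proof: the Stone map $\phi$, the operation identities imported from the proofs of Lemmas~\ref{lemma 3.8} and~\ref{lemma 3.11}, injectivity via the Prime Filter Theorem, and surjectivity because every compact open set is some $\phi(a)$. It is in fact more complete than the paper, which only refers back to the compactness of $\phi(a)$ and the description of $\Omega(\mathcal{S}_0(A))$ as ``already seen'' (in effect deferring to Stone), whereas you actually prove both.

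One caution concerns what you import. The identity $\phi(\nabla a)=\nabla_{R_A}\phi(a)$ is true, but the paper's argument for the inclusion $\phi(\nabla a)\subseteq\nabla_{R_A}\phi(a)$ is invalid. It splits ``for every $y$, $y\notin\phi(a)$ or $(y,x)\notin R_A$'' into two uniform cases. So this inclusion needs its own Prime Filter Theorem argument: given $\nabla a\in x$, separate ${\uparrow}a$ from the ideal $\{b\in A:\nabla b\notin x\}$. This set is an ideal because $\nabla$ is monotone, preserves finite joins, satisfies $\nabla 0=0$, and $x$ is prime. The separation yields a prime filter $y\ni a$ with $yR_Ax$, hence $x\in\nabla_{R_A}\phi(a)$.
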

\begin{proof}
    We show that $\phi(a)=\{x\in\mathfrak{P}(A):a\in x\}$ exhibits the desired isomorphism. We have already seen that $\phi(a)$ is compact open for each $a\in A$ and hence $\phi$ is well-defined. To see that $\phi$ is injective, take any $a,b\in A$ such that $a\not=b$. Suppose $a\not\leq b$ and consider the principal filter ${\uparrow}a$ and principal ideal ${\downarrow}b$. Since \[a\not\leq b\Longrightarrow{\uparrow}a\cap{\downarrow}b=\{c\in A:a\leq c\hspace{.1cm}\&\hspace{.1cm} c\leq b\}=\emptyset,\] it follows by the Prime Filter Theorem that one can find some prime filter $x_p\in\mathfrak{P}(A)$ such that ${\uparrow}a\subseteq x_p$ and $x_p\cap{\downarrow}b=\emptyset$ and hence $b\not\in x_p$. Thus $x_p\in\phi(a)$ and $x_p\not\in\phi(b)$ and hence $\phi(a)\not=\phi(b)$. We have already seen that each $U\in\Omega(\mathcal{S}_0(A))$ is of the form $\phi(a)$ for some $a\in A$ and hence $\phi$ is surjective. Thus $\phi$ is a bijection.  

    To verify that $\phi$ is a homomorphism, note that we have already demonstrated that $\phi(a\wedge b)=\phi(a)\cap\phi(b)$ and $\phi(a\vee b)=\phi(a)\cup\phi(b)$ in the proof of Lemma \ref{lemma 3.8}. Moreover, we have also verified $\phi(-a)=\phi(a)^*$ and $\phi(\nabla a)=\nabla_{R_A}\phi(a)$ in the proof of Lemma \ref{lemma 3.11}. Since $\mathfrak{P}(A)$ is the collection of all prime filters of $A$:   
\[\phi(1)=\{x\in\mathfrak{P}(A):1\in x\}=\mathfrak{P}(A),\hspace{.2cm}\phi(0)=\{x\in\mathfrak{P}(A):0\in X\}=\emptyset\] Hence we conclude that $\phi$ is an isomorphism from $A$ to $\mathcal{A}_0(\mathcal{S}_0(A))$. 
\end{proof}

\subsection{Algebraic Realization of S4 De Morgan Spectral Spaces}
In this subsection, we establish an algebraic realization theorem by demonstrating that every S4 De Morgan spectral space $X$ is homeomorphic and relationally isomorphic to the prime spectrum $\mathcal{S}_0(\mathcal{A}_0(X))$ of the S4 De Morgan algebra $\mathcal{A}_0(X)$ of compact open subsets of $X$. 
\begin{theorem}\label{homeo}
    Every S4 De Morgan spectral space $X$ is homeomorphic to $\mathcal{S}_0(\mathcal{A}_0(X))$. 
\end{theorem}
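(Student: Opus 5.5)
I would use the standard map $\varepsilon\colon X\to\mathfrak{P}(\Omega(X))$ given by
\[\varepsilon(x)=\Omega_X(x)=\{U\in\Omega(X):x\in U\},\]
and show that it is a homeomorphism onto $\mathcal{S}_0(\mathcal{A}_0(X))$. The plan has four steps, carried out in this order.

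\textbf{Step 1: $\varepsilon$ is well defined.} We need $\Omega_X(x)$ to be a prime filter of $\mathcal{A}_0(X)$.
\begin{itemize}
\item It is nonempty, since $X\in\Omega(X)$ by compactness.
\item It is closed under $\cap$, since $\Omega(X)$ is closed under finite intersections by coherence.
\item It is upward closed.
\item It omits $\emptyset$, so it is proper.
\item It is prime, because $x\in U\cup V$ forces $x\in U$ or $x\in V$.
\end{itemize}

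\textbf{Step 2: $\varepsilon$ is injective.} If $x\neq y$, then by antisymmetry of the specialization order (the $T_0$ axiom) we may assume $x\not\leqslant y$. Lemma \ref{separation property} then gives $U\in\Omega(X)$ with $x\in U$ and $y\notin U$, so $\varepsilon(x)\neq\varepsilon(y)$. The same argument shows more: $x\leqslant y$ iff $\varepsilon(x)\subseteq\varepsilon(y)$, so $\varepsilon$ is an order embedding.

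\textbf{Step 3: $\varepsilon$ is surjective.} This is where sobriety enters, and I expect it to be the main obstacle. Take a prime filter $P$ of $\Omega(X)$. Sobriety is stated for completely prime filters, so we must convert $P$ into one. Consider the collection
\[\widehat P=\{W\in O(X): U\subseteq W \text{ for some } U\in P\}\]
in the frame $O(X)$; this is the notion of completely prime filter for which the sobriety axiom is meaningful. Then argue as follows.
\begin{itemize}
\item $\widehat P$ is a filter: upward closure is immediate, and meets reduce to coherence.
\item $\widehat P$ is completely prime. Suppose $\bigcup_i W_i\in\widehat P$, so $U\subseteq\bigcup_i W_i$ for some $U\in P$. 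Write each $W_i$ as a union of compact opens, using that $\Omega(X)$ is a basis. Since $U$ is compact, $U$ is covered by finitely many of these basic sets $V_1,\dots,V_n$. Hence $U=\bigcup_j (U\cap V_j)$ is a finite union of members of $\Omega(X)$. Primeness of $P$ gives some $U\cap V_j\in P$, and therefore the $W_i$ containing $V_j$ lies in $\widehat P$.
\end{itemize}
Sobriety now yields $x$ with $\widehat P=\{W\in O(X):x\in W\}$. Intersecting with $\Omega(X)$ gives $P=\Omega_X(x)=\varepsilon(x)$. If the paper's convention really is "completely prime filter in $\Omega(X)$", the same compactness argument shows that $P$ itself already behaves completely primely with respect to joins that exist in $\Omega(X)$, and one applies sobriety directly.

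\textbf{Step 4: $\varepsilon$ and its inverse are continuous.} For $U\in\Omega(X)$,
\[\varepsilon^{-1}[\phi(U)]=\{x: U\in\varepsilon(x)\}=U,\]
which is open. So $\varepsilon$ is continuous, since the sets $\phi(U)$ form the basis $\beta$. Because $\varepsilon$ is a bijection, this identity also gives $\varepsilon[U]=\phi(U)$, so $\varepsilon$ maps the basis $\Omega(X)$ of $X$ onto the basis $\beta$. Hence $\varepsilon$ is open, and therefore a homeomorphism.

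For the relational part, which will be needed later, the same map should satisfy $\varepsilon(g(x))=g_{\Omega(X)}(\varepsilon(x))$. Indeed, $U\in g_{\Omega(X)}(\varepsilon(x))$ iff $x\notin U^*$ iff $g(x)\in U$. The relation is preserved as well, via condition 4 of Definition \ref{modal de morgan spectral space}, but only the homeomorphism is claimed in the statement.
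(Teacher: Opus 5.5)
Your proof is correct and follows the paper's argument step for step. Your $\varepsilon$ is the paper's $\psi$, and the steps match: prime-filter check, injectivity via Lemma \ref{separation property}, surjectivity via sobriety, and continuity and openness via $\psi^{-1}[\phi(U)]=U$ and $\psi[U]=\phi(U)$. Your Step 3 is more careful than the paper's, which just asserts that sobriety covers ``every completely prime filter, and hence every prime filter''. Your compactness argument, which turns a prime filter $P$ of $\Omega(X)$ into the completely prime filter $\widehat P$ of $O(X)$, is exactly what justifies that passage.
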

\begin{proof}
    We show that the function: \[\psi\colon X\to\mathcal{S}_0(\mathcal{A}_0(X));\hspace{.2cm}\psi(x)=\{U\in\Omega(X):x\in U\}\] exhibits the desired homeomorphism from $X$ to $\mathcal{S}_0(\mathcal{A}_0(X))$. We first verify that $\psi(x)$ is a prime filter in $\mathcal{A}_0(X)$ for each $x\in X$ to verify that $\psi$ is well-defined. Take any $U,V\in\Omega(X)$ such that $U,V\in \psi(x)$. Then $x\in U$ and $x\in V$ and hence $x\in U\cap V$. Since $X$ is a spectral space, $\Omega(X)$ is closed under finite intersections and hence $U\cap V\in \psi(x)$. Now take any $U,V\in\Omega(X)$ such that $U\in \psi(x)$ and $U\subseteq V$. Then $x\in U$ and hence $x\in V$ so $V\in \psi(x)$. Therefore $\psi(x)$ is upwards closed with respect to $\subseteq$ and is closed under $\cap$ and is thus a filter in $\Omega(X)$.

    Clearly $\psi(x)$ is a proper filter in $\Omega(X)$ in the sense that $\emptyset\not\in \psi(x)$ as $\emptyset$ is the bottom universal bound in $\Omega(X)$. To see that $\psi(x)$ is a prime filter, assume that $U\cup V\in \psi(x)$ so that $x\in U\cup V$. If $x\in U$, then $U\in \psi(x)$ and on the other hand, if $x\in V$, then $V\in \psi(x)$. Hence $\psi(x)$ is a prime filter in $\Omega(X)$ for all $x\in X$. 

    To see that $\psi$ is a bijection, we first show that $\psi$ is an injective function. Hence take any $x,y\in X$ such that $x\not=y$ and assume that $x\not\leqslant y$. By Lemma \ref{separation property}, there exists some $U\in\Omega(X)$ such that $x\in U$ and $y\not\in U$. Therefore $U\in\psi(x)$ and $U\not\in\psi(y)$ so $\psi(x)\not\subseteq\psi(y)$ and hence $\psi(x)\not=\psi(y)$. For surjectivity, since $X$ is a sober space, it follows that every completely prime filter, and hence every prime filter, in the lattice $\Omega(X)$ is of the following form:  
    \[\Omega_X(x)=\{U\in\Omega(X):x\in U\}=\psi(x)\] for some $x\in X$. To see that $\psi$ is a continuous function, since each basic open set in $\mathcal{S}_0(\mathcal{A}_0(X))$ is of the form $\phi(U)$ for some $U\in\Omega(X)$, we have: 
    \begin{align*}
        \psi^{-1}[U]&=\{x\in X:\Omega_X(x)\in\phi(U)\}\\&=\{x\in X:U\in\Omega_X(x)\}\\&=\{x\in X:x\in U\}\\&=U.
    \end{align*}
    To see that $\psi^{-1}$ is continuous, we have: 
\begin{align*}
    \psi[U]&=\{\Omega_X(x):x\in U\}\\&=\{\Omega_X(x):U\in\Omega_X(x)\}\\&=\phi(U).
\end{align*}
    As $\psi$ has been shown to a be a continuous bijection with continuous inverse, it follows that $\psi$ is a homeomorphism from $X$ to $\mathcal{S}_0(\mathcal{A}_0(X))$. 
\end{proof}

 Recall that if $X$ and $X'$ are sets equipped with binary relations $R\subseteq X\times X$ and $R'\subseteq X'\times X'$, then a function $f\colon X\to X'$ is a \emph{relational isomorphism} if:  
\[xRy\Longleftrightarrow f(x)R'f(y)\]
\begin{theorem}\label{relaitonal iso}
    The frame reduct $\langle X;R\rangle$ of any S4 De Morgan spectral space $X$ is relationally isomorphic to the frame reduct $\langle\mathfrak{P}(\Omega(X));R_{\Omega(X)}\rangle$ of $\mathcal{S}_0(\mathcal{A}_0(X))$.  
\end{theorem}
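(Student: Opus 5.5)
The relational isomorphism will be the same map $\psi(x)=\Omega_X(x)=\{U\in\Omega(X):x\in U\}$ used in Theorem \ref{homeo}. That theorem already shows $\psi$ is a bijection from $X$ onto $\mathfrak{P}(\Omega(X))$, so it only remains to show
\[xRy\Longleftrightarrow \psi(x)R_{\Omega(X)}\psi(y).\]
By Definition \ref{prime spectrum}(3), the right-hand side unfolds to: for every $U\in\Omega(X)$, if $x\in U$ then $y\in\nabla_RU=R[U]$.

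For the forward direction, assume $xRy$ and take any $U\in\Omega(X)$ with $x\in U$. Then $y\in R[U]=\nabla_RU$, witnessed by $x$ itself. By Definition \ref{modal de morgan spectral space}(3), $\nabla_RU\in\Omega(X)$, so $\nabla_RU\in\psi(y)$. Hence $\psi(x)R_{\Omega(X)}\psi(y)$. This direction uses nothing beyond the definition of $\nabla_R$ and the closure of $\Omega(X)$ under $\nabla_R$.

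For the converse I will argue by contraposition. Suppose $x\overline{R}y$. Definition \ref{modal de morgan spectral space}(4) supplies $U\in\Omega(X)$ with $x\in U$ and $y\notin\nabla_RU$. Thus $U\in\psi(x)$ while $\nabla_RU\notin\psi(y)$, which is exactly the failure of $\psi(x)R_{\Omega(X)}\psi(y)$.

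The only point needing care is bookkeeping. In the algebra $\mathcal{A}_0(X)$ the modal operator is $\nabla_R$, so the canonical relation $R_{\Omega(X)}$ must be read with $\nabla_R$ in place of $\nabla$. Since $\psi$ is a bijection, these two implications finish the proof. I do not expect a genuine obstacle: the separation axiom in Definition \ref{modal de morgan spectral space}(4) was built precisely to give the hard direction. As a remark, the same computation with $g$ and $^*$ would show $\psi(g(x))=g_{\Omega(X)}(\psi(x))$, but that identity is not needed for the statement.
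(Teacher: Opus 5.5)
Your proposal is correct and follows essentially the same route as the paper: you reuse the bijection $\psi$ from Theorem \ref{homeo}, unfold $R_{\Omega(X)}$ with $\nabla_R$ in place of $\nabla$, and get the nontrivial direction from Definition \ref{modal de morgan spectral space}(4). The paper packs both directions into a single chain of equivalences. You instead state the easy direction separately (if $xRy$ and $x\in U$, then $y\in\nabla_RU$), which the paper leaves implicit in its first ``iff''.
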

\begin{proof}
Observe that since $\psi(x)$ is a prime filter in $\mathcal{S}_0(\mathcal{A}_0(X))$ for any $x\in X$, we have that $\psi(x)R_{\Omega(X)}\psi(y)$ iff for all $U\in\Omega(X)$ we have $U\in \psi(x)$ implies $\nabla_{R}U\in\psi(y)$. Hence we have by Definition \ref{modal de morgan spectral space}(4) and the definition of $\psi$, we have that $x\overline{R}y$ iff there exists $U\in\Omega(X)$ such that $x\in U$ and $y\not\in\nabla_RU$ iff there exists $U\in\Omega(X)$ such that $U\in\psi(x)$ and $\nabla_RU\not\in\psi(y)$ iff $\psi(x)\overline{R_{\Omega(X)}}\psi(y)$.   
\end{proof}
\begin{theorem}\label{theorem 3.16}
    Any S4 De Morgan spectral space $X$ satisfies: \[\psi(g(x))=g_{\Omega(X)}(\psi(x))\] for all $x\in X$.   
\end{theorem}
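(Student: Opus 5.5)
The identity says that the homeomorphism $\psi$ of Theorem \ref{homeo} intertwines $g$ with the dual involution $g_{\Omega(X)}$ on $\mathfrak{P}(\Omega(X))$. Both sides are subsets of $\Omega(X)$, so I will prove it by extensionality: fix $x\in X$ and $U\in\Omega(X)$, and show that $U\in\psi(g(x))$ if and only if $U\in g_{\Omega(X)}(\psi(x))$.

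First I unfold the right-hand side. Definition \ref{prime spectrum}(2) is applied to the S4 De Morgan algebra $\mathcal{A}_0(X)$ of Lemma \ref{space to algebra}, whose De Morgan complement is $^*$. This gives
\[g_{\Omega(X)}(\psi(x))=\{U\in\Omega(X): U^*\notin\psi(x)\}.\]
Next I use the definition of $\psi$: the condition $U^*\notin\psi(x)$ says exactly $x\notin U^*$. Here I need $U^*\in\Omega(X)$, which holds by Definition \ref{modal de morgan spectral space}(3), so the membership test is meaningful.

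Then I unfold $U^*=\{y\in X:g(y)\notin U\}$. The condition $x\notin U^*$ becomes $g(x)\in U$. By the definition of $\psi$ once more, this is equivalent to $U\in\psi(g(x))$. Chaining these equivalences gives
\[U\in g_{\Omega(X)}(\psi(x))\iff U^*\notin\psi(x)\iff x\notin U^*\iff g(x)\in U\iff U\in\psi(g(x)).\]
Since $U$ was arbitrary, the two prime filters coincide.

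There is no substantive obstacle; the only care needed is bookkeeping. One point is to note that the complement on $\mathcal{A}_0(X)$ is $^*$, so that Definition \ref{prime spectrum}(2) specializes correctly. The other is to observe that the double negation is absorbed by the definition of $U^*$ itself, so the argument never needs $g$ to be an involution or order-inverting.
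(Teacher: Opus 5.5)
Your proof is correct and is essentially the paper's argument: the paper also unfolds $g_{\Omega(X)}(\psi(x))=\{U\in\Omega(X):U^*\notin\psi(x)\}$ and runs the same chain $g(x)\in U\iff x\notin U^*\iff U^*\notin\psi(x)$, written as set equalities rather than elementwise. Your two side remarks are accurate: the middle step needs $U^*\in\Omega(X)$, and the argument uses neither that $g$ is an involution nor that it is order-inverting.
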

\begin{proof}
   Note that for any S4 De Morgan spectral space $X$, since $\Omega(X)$ is an S4 De Morgan algebra and $\psi(x)$ is a prime filter in $\Omega(X)$, by Definition \ref{prime spectrum}(2) we have: 
   \[g_{\Omega(X)}(\psi(x))=\{U\in\Omega(X):U^*\not\in \psi(x)\}.\]
   The calculation therefore proceeds in the following manner: 
    \begin{align*}
        \psi(g(x))&=\{U\in\Omega(X):g(x)\in U\}
        \\&=\{U\in\Omega(X):x\not\in U^*\}
        \\&=\{U\in\Omega(X):U^*\not\in \psi(x)\}
        \\&=g_{\Omega(X)}(\psi(x)).
    \end{align*}
This completes the proof.
\end{proof}
\subsection{The Contravariant Functors $\mathcal{S}_F$ and $\mathcal{A}_F$}
In this subsection, we introduce the category $\mathbf{S4DMSpec}$ of S4 De Morgan spectral spaces and show that $\mathbf{S4DM}$ is dually equivalent to $\mathbf{S4DMSpec}$. 
\begin{definition}
    Let $A$ and $A'$ be S4 De Morgan algebras. A \emph{homomorphism} is a function $h\colon A\to A'$ satisfying the following conditions: 
    \begin{enumerate}
        \item $h(a\wedge b)=h(a)\wedge h(b)$; 
        \item $h(a\vee b)=h(a)\vee h(b)$; 
        \item $h(-a)=-h(a)$; 
        \item $h(0)=0'$; 
        \item $h(1)=1'$; 
        \item $h(\nabla a)=\nabla h(a)$. 
    \end{enumerate}
\end{definition}
\begin{definition}
    By $\mathbf{S4DM}$ we denote the category of S4 De Morgan algebras and homomorphisms. 
\end{definition}

The following spectral frame morphisms are a natural extension of the continuous frame morphisms between involution spaces that were introduced in \cite{bimbo}.

\begin{definition}\label{spectral frame morphism}
    Let $X$ and $X'$ be S4 De Morgan spectral spaces. Then a function $f\colon X\to X'$ is a \emph{spectral frame morphism} provided:
    \begin{enumerate}
        \item $f$ is a spectral map; 
        \item $f(g(x))=g(f(x))$; 
        \item $f^{-1}[\nabla_RU]=\nabla_Rf^{-1}[U]$.  
    \end{enumerate}
\end{definition}
\noindent The following is needed in order to justify Definition \ref{category of spec spaces}. 
\begin{proposition}\label{composition}
    Spectral frame morphisms on S4 De Morgan spectral spaces are closed under composition. 
\end{proposition}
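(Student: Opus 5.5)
Let $f\colon X\to X'$ and $f'\colon X'\to X''$ be spectral frame morphisms between S4 De Morgan spectral spaces. I will show that $f'\circ f\colon X\to X''$ satisfies each of the three conditions of Definition \ref{spectral frame morphism} by checking them separately. The idea throughout is to use the contravariance of preimages, $(f'\circ f)^{-1}[W]=f^{-1}[f'^{-1}[W]]$, and to apply the hypotheses on $f'$ and $f$ in that order.

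For condition (1), take $W\in\Omega(X'')$. Since $f'$ is spectral, $f'^{-1}[W]\in\Omega(X')$. Since $f$ is spectral, $f^{-1}[f'^{-1}[W]]\in\Omega(X)$. Hence $(f'\circ f)^{-1}[W]\in\Omega(X)$, so $f'\circ f$ is a spectral map.

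For condition (2), writing $g,g',g''$ for the involutions on the three spaces, I would compute
\[(f'\circ f)(g(x))=f'(g'(f(x)))=g''(f'(f(x)))=g''((f'\circ f)(x)).\]
The first equality uses condition (2) for $f$ and the second uses it for $f'$.

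For condition (3), take $W$ in the domain where condition (3) is required; I read it as $W\in\Omega(X'')$, in line with the algebraic setting, though the argument works verbatim for arbitrary subsets. Then
\[(f'\circ f)^{-1}[\nabla_{R''}W]=f^{-1}[f'^{-1}[\nabla_{R''}W]]=f^{-1}[\nabla_{R'}f'^{-1}[W]]=\nabla_Rf^{-1}[f'^{-1}[W]]=\nabla_R(f'\circ f)^{-1}[W].\]
The second equality applies condition (3) for $f'$ to $W$. The third applies condition (3) for $f$ to the set $f'^{-1}[W]$.

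The only delicate point is this third equality. If condition (3) for $f$ is only stipulated for compact open sets, I must know that $f'^{-1}[W]\in\Omega(X')$. This is exactly what condition (1) for $f'$ guarantees, so condition (1) has to be established first. With that in place the proof is purely formal and involves no genuine obstacle. I would also remark that identity maps trivially satisfy all three conditions, which, together with this proposition, is what Definition \ref{category of spec spaces} needs.
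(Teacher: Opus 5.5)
Your proof is correct and takes essentially the same route as the paper. Both verify the three defining conditions of a spectral frame morphism directly, using $(f'\circ f)^{-1}[W]=f^{-1}[(f')^{-1}[W]]$ and the hypotheses on the two maps in turn. Your remark that condition (3) for $f$ is applied to $(f')^{-1}[W]$, which is compact open by condition (1) for $f'$, makes explicit a step the paper leaves implicit.
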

\begin{proof}
 Let $\langle X_1;g_1,R_1,\tau_1\rangle$, $\langle X_2;g_2,R_2,\tau_2\rangle$, and $\langle X_3;g_3,R_3,\tau_3\rangle$  be S4 De Morgan spectral spaces and let $f_1\colon X_1\to X_2$ and $f_2\colon X_2\to X_3$ be spectral frame morphisms. The proof that $f_2\circ f_1\colon X_1\to X_3$ is a spectral map is a routine exercise. Take any $U\in\Omega(X_3)$ and observe that: \[(f_2\circ f_1)^{-1}[U]=f_1^{-1}\circ f_2^{-1}[U]=f_1^{-1}[f_2^{-1}[U]].\] Since $U\in\Omega(X_2)$ and $f_2$ is a spectral map, we have that $f_2^{-1}[U]\in\Omega(X_2)$ and since $f_1$ is a spectral map, we have $f_1^{-1}[f_2^{-1}[U]]\in\Omega(X_1)$. The calculation that $f_2\circ f_1(g(x))=g(f_2\circ f_1(x))$ directly exploits our assumptions that $f_1(g(x))=g(f_1(x))$ and $f_2(g(x))=g(f_2(x))$ and proceeds in the following manner:
 \[f_2\circ f_1(g(x))=f_2(f_1(g(x)))=f_2(g(f_1(x)))=g(f_2(f_1(x)))=g(f_2\circ f_1(x))\]
 Finally, to see that $f_2\circ f_1$ satisfies Definition \ref{spectral frame morphism}(3), observe that:
 \begin{align*}
     (f_2\circ f_1)^{-1}[\nabla_RU]&=f_1^{-1}\circ f_2^{-1}[\nabla_RU]\\&=f_1^{-1}[f^{-1}_2[\nabla_RU]]\\&=f^{-1}_1[\nabla_Rf^{-1}_2[U]]\\&=\nabla_Rf^{-1}_1[f^{-1}_2[U]]\\&=\nabla_Rf^{-1}_1\circ f_2^{-1}[U]\\&=\nabla_R(f_2\circ f_1)^{-1}[U]
 \end{align*}
 Hence we conclude that $f_2\circ f_1\colon X_1\to X_3$ is a spectral frame morphism
 whenever $f_1\colon X_1\to X_2$ and $f_2\colon X_2\to X_3$ are spectral frame morphisms. 
 \end{proof}
\begin{definition}\label{category of spec spaces}
    By $\mathbf{S4DMSpec}$ we denote the category of S4 De Morgan spectral spaces and their associated spectral frame morphisms. 
\end{definition}
\begin{lemma}\label{homo to spec}
    Let $A$ and $A'$ be S4 De Morgan algebras and let $h\colon A\to A'$ be a homomorphism. Then the map $\mathcal{S}_1\colon\mathcal{S}_0(A')\to\mathcal{S}_0(A)$ defined by $\mathcal{S}_1(h):=h^{-1}$ is a spectral frame morphism from the S4 De Morgan spectral space $\mathcal{S}_0(A')$ to the S4 De Morgan spectral space $\mathcal{S}_0(A)$.  
\end{lemma}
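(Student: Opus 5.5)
We must show that $h^{-1}\colon\mathfrak{P}(A')\to\mathfrak{P}(A)$ is well defined and satisfies the three clauses of Definition \ref{spectral frame morphism}. We take these in order: well-definedness, the spectral map condition, commutation with $g$, and finally the clause $f^{-1}[\nabla_RU]=\nabla_Rf^{-1}[U]$.

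\emph{Well-definedness.} For $x\in\mathfrak{P}(A')$ we check that $h^{-1}[x]$ is a prime filter of $A$. This uses only that $h$ preserves $\wedge$, $\vee$, $0$ and $1$:
\begin{itemize}
\item $1\in h^{-1}[x]$ because $h(1)=1'$;
\item $0\notin h^{-1}[x]$ because $h(0)=0'$;
\item closure under meets and primeness transfer through $h(a\wedge b)=h(a)\wedge h(b)$ and $h(a\vee b)=h(a)\vee h(b)$;
\item upward closure follows since $h$ is monotone.
\end{itemize}

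\emph{Spectral map.} By Theorem \ref{rep theorem}, every compact open set of $\mathcal{S}_0(A)$ has the form $\phi(a)$. The key identity is
\[(h^{-1})^{-1}[\phi(a)]=\{x\in\mathfrak{P}(A'):a\in h^{-1}[x]\}=\phi'(h(a)),\]
and $\phi'(h(a))$ is compact open in $\mathcal{S}_0(A')$.

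\emph{Commutation with $g$.} We need $h^{-1}[g_{A'}(x)]=g_A(h^{-1}[x])$. The computation is
\[a\in h^{-1}[g_{A'}(x)]\iff -h(a)\notin x\iff h(-a)\notin x\iff -a\notin h^{-1}[x]\iff a\in g_A(h^{-1}[x]).\]

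\emph{The $\nabla$ clause.} It suffices to treat $U=\phi(a)$, since every compact open set has this form. Using $\nabla_{R_A}\phi(a)=\phi(\nabla a)$ (from Lemma \ref{lemma 3.11}) and the identity above, the left side becomes
\[(h^{-1})^{-1}[\phi(\nabla a)]=\phi'(h(\nabla a))=\phi'(\nabla h(a)).\]
Applying Lemma \ref{lemma 3.11} in $A'$ gives $\phi'(\nabla h(a))=\nabla_{R_{A'}}\phi'(h(a))$. The right side is $\nabla_{R_{A'}}(h^{-1})^{-1}[\phi(a)]=\nabla_{R_{A'}}\phi'(h(a))$, so the two sides agree. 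Hence the whole clause reduces to the identity $\phi(\nabla a)=\nabla_R\phi(a)$ in both algebras, together with $h(\nabla a)=\nabla h(a)$.

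\emph{Main obstacle.} The chain above depends on $\nabla_{R_A}\phi(a)=\phi(\nabla a)$, and the proof of Lemma \ref{lemma 3.11} for the inclusion $\phi(\nabla a)\subseteq R_A[\phi(a)]$ is sketchy. For this direction I would argue directly. Given $\nabla a\in x$, I want a prime $y\ni a$ with $yR_Ax$. Take the filter ${\uparrow}a$ and the set $\{b:\nabla b\notin x\}$. This set is an ideal: it is closed under joins because $\nabla$ is additive and $x$ is prime, and it is downward closed by monotonicity of $\nabla$. It is disjoint from ${\uparrow}a$ because $\nabla a\in x$ and $\nabla$ is monotone. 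The Prime Filter Theorem then gives a prime filter $y$ with $a\in y$ and $\nabla b\in x$ for all $b\in y$, i.e. $yR_Ax$. If this step holds, the remaining checks above are routine.
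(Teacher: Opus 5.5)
Your proposal is correct and follows the paper's proof: you use the same identity $(h^{-1})^{-1}[\phi(a)]=\phi(h(a))$ for the spectral-map clause, the same element-chasing for $g$, and the same chain through $\phi(\nabla a)=\nabla_{R}\phi(a)$ and $h(\nabla a)=\nabla h(a)$ for the third clause. Your two additions fill real holes in the paper. The paper never checks that $h^{-1}[x]$ is a prime filter. Its argument in Lemma~\ref{lemma 3.11} for $\phi(\nabla a)\subseteq\nabla_{R_A}\phi(a)$ invalidly splits $\forall y\,(y\notin\phi(a)\text{ or } y\overline{R_A}x)$ into two universal cases, whereas your Prime Filter Theorem argument with the ideal $\{b:\nabla b\notin x\}$ is a correct proof of that inclusion.
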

\begin{proof}
To see that $\mathcal{S}_1$ is a spectral frame morphism, we first verify that $\mathcal{S}_1(h)$ is a spectral map. Hence take any $U\in\Omega(\mathcal{S}_0(A))$ where $U=\phi(a)$ for some $a\in A$. It suffices to show that $\mathcal{S}_1(h)^{-1}[\phi(a)]\in\Omega(\mathcal{S}_0(A'))$. We have: 
\begin{align*}
    \mathcal{S}_1(h)^{-1}[\phi(a)]&=(h^{-1})^{-1}[\phi(a)]\\&=\{x\in\mathfrak{P}(A):h^{-1}[x]\in\phi(a)\}
    \\&=\{x\in\mathfrak{P}(A):a\in h^{-1}[x]\}
    \\&=\{x\in\mathfrak{P}(A):h(a)\in x\}
    \\&=\phi(h(a))
\end{align*}
where $\phi(h(a))\in\Omega(\mathcal{S}_0(A'))$. To see that Definition \ref{spectral frame morphism}(2) is satisfied, observe that:  
\begin{align*}
    \mathcal{S}_1(h)[g_A(x)]&=h^{-1}[g_A(x)]\\&=\{a\in A:h(a)\in g_A(x)\}
    \\&=\{a\in A:-h(a)\not\in x\}
    \\&=\{a\in A:h(-a)\not\in x\}
    \\&=\{a\in A:-a\not\in h^{-1}[x]\}
    \\&=g_A(h^{-1}[x])
    \\&=g_A(\mathcal{S}_1(h)[x])
\end{align*}
Finally, to see that Definition \ref{spectral frame morphism}(3) note that the previously established facts that $\phi(\nabla a)=\nabla_R\phi(a)$ and $(h^{-1})^{-1}[\phi(a)]=\phi(h(a))$ together with our assumption that $h(\nabla a)=\nabla h(a)$ for any $a\in A$ gives the following: 
\begin{align*}
    \mathcal{S}_1(h)^{-1}[\nabla_R\phi(a)]&=(h^{-1})^{-1}[\nabla_R\phi(a)]
    \\&=(h^{-1})^{-1}[\phi(\nabla a)]
    \\&=\phi(h(\nabla a))
    \\&=\phi(\nabla h(a))
    \\&=\nabla_R\phi(h(a))
    \\&=\nabla_R(h^{-1})^{-1}[\phi(a)]
    \\&=\nabla_R\mathcal{S}_1(h)^{-1}[\phi(a)]
\end{align*}
This completes the proof that $\mathcal{S}_1(h)$ is a spectral frame morphism from $\mathcal{S}_0(A')$ to $\mathcal{S}_0(A)$ whenever $h\colon A\to A'$ is a homomorphism.  
\end{proof}

\begin{lemma}\label{spec to homo}
    Let $X$ and $X'$ be S4 De Morgan spectral spaces and let $f\colon X\to X'$ be a spectral frame morphism. Then the map $\mathcal{A}_1\colon\mathcal{A}_0(X')\to\mathcal{A}_0(X)$ defined by $\mathcal{A}_1(f)=f^{-1}$ is a homomorphism from the S4 De Morgan algebra $\mathcal{A}_0(X')$ to the S4 De Morgan algebra $\mathcal{A}_0(X)$.  
\end{lemma}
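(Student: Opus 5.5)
The plan is to verify directly that $f^{-1}\colon\Omega(X')\to\Omega(X)$ is well defined and preserves each operation of the signature $\langle\Omega;\cap,\cup,^*,\emptyset,X,\nabla_R\rangle$ of Lemma \ref{space to algebra}.

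\emph{Well-definedness.} Since $f$ is a spectral map by Definition \ref{spectral frame morphism}(1), $f^{-1}[U]\in\Omega(X)$ for every $U\in\Omega(X')$. So $\mathcal{A}_1(f)$ indeed maps $\mathcal{A}_0(X')$ into $\mathcal{A}_0(X)$.

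\emph{Lattice operations and bounds.} Preimages commute with all set operations, so
\[f^{-1}[U\cap V]=f^{-1}[U]\cap f^{-1}[V],\qquad f^{-1}[U\cup V]=f^{-1}[U]\cup f^{-1}[V],\]
together with $f^{-1}[\emptyset]=\emptyset$ and $f^{-1}[X']=X$. This gives clauses (1), (2), (4) and (5) of the definition of homomorphism.

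\emph{Modal operator.} Clause (6) is exactly Definition \ref{spectral frame morphism}(3):
\[f^{-1}[\nabla_{R'}U]=\nabla_R f^{-1}[U].\]
Nothing further is needed, because $\nabla_R$ on $\mathcal{A}_0(X)$ is defined by $R[\,\cdot\,]$.

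\emph{De Morgan complement.} This is the one clause that needs a genuine calculation, though it is short. I would unfold $U^*=\{x':g'(x')\notin U\}$ and use Definition \ref{spectral frame morphism}(2), $f(g(x))=g'(f(x))$, to obtain the chain
\[x\in f^{-1}[U^*]\iff g'(f(x))\notin U\iff f(g(x))\notin U\iff g(x)\notin f^{-1}[U]\iff x\in (f^{-1}[U])^*.\]
Hence $f^{-1}[U^*]=(f^{-1}[U])^*$.

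The main obstacle is therefore only the complement clause, and even it reduces immediately to the commutation of $f$ with $g$. Everything else is either pure set theory or built into the axioms of a spectral frame morphism.
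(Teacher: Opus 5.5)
Your proposal is correct and follows essentially the same route as the paper. In both, preimages preserve $\cap$, $\cup$, $\emptyset$ and the top element by set theory, the complement clause is the same one-line chain using $f(g(x))=g'(f(x))$, and the $\nabla_R$ clause is exactly condition (3) of the morphism definition. Your explicit check that $f^{-1}$ lands in $\Omega(X)$, via the spectral-map condition, is a small detail the paper leaves implicit.
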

\begin{proof}
    A standard set-theoretic argument shows that for every $U,V\in\Omega(X')$, we have $\mathcal{A}_1(f)[U\cap V]=\mathcal{A}_1(f)[U]\cap\mathcal{A}_1(f)[V]$, $\mathcal{A}_1(f)[U\cup V]=\mathcal{A}_1(f)[U]\cup\mathcal{A}_1(f)[V]$, $\mathcal{A}_1(f)[X]=X$, and $\mathcal{A}_1(f)[\emptyset]=\emptyset$. Hence $\mathcal{A}_1$ is a bounded lattice homomorphism from $\Omega(X')$ to $\Omega(X)$. To see that $\mathcal{A}_1$ is a homomorphism for $^*$, we have: 
    \begin{align*}
        \mathcal{A}_1(f)[U^*]=f^{-1}[U^*]&=\{x\in X:f(x)\in U^*\}
        \\&=\{x\in X:g(f(x))\not\in U\}
        \\&=\{x\in X:f(g(x))\not\in U\}
        \\&=\{x\in X:g(x)\not\in f^{-1}[U]\}
        \\&=f^{-1}[U]^*=\mathcal{A}_1(f)[U]^*
    \end{align*}
    The calculation that $\mathcal{A}_1$ is a homomorphism for $\nabla_R$ follows immediately from Definition \ref{spectral frame morphism}(3) since $\mathcal{A}_1(f)[\nabla_RU]=f^{-1}[\nabla_RU]=\nabla_Rf^{-1}[U]=\nabla_R\mathcal{A}_1(f)[U]$.                 
\end{proof}
Observe that Lemma \ref{lemma 3.11} together with Lemma \ref{homo to spec} provide a contravariant $\mathcal{S}_F=\langle\mathcal{S}_0,\mathcal{S}_1\rangle\colon\mathbf{S4DM}\to\mathbf{S4DMSpec}$ and that Lemma \ref{space to algebra} together with Lemma \ref{spec to homo} provide a contravariant functor $\mathcal{A}_F=\langle\mathcal{A}_0,\mathcal{A}_1\rangle\colon\mathbf{S4DMSpec}\to\mathbf{S4DM}$. In fact, more is true. 

\begin{lemma}\label{fully faithful 1}
    $\mathcal{S}_F=\langle \mathcal{S}_0,\mathcal{S}_1\rangle\colon\mathbf{S4DM}\to\mathbf{S4DMSpec}$ is fully faithful. 
\end{lemma}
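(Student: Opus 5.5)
We must show that for S4 De Morgan algebras $A,A'$ the assignment $h\mapsto\mathcal{S}_1(h)=h^{-1}$ is a bijection from $\mathrm{Hom}(A,A')$ onto the set of spectral frame morphisms $\mathcal{S}_0(A')\to\mathcal{S}_0(A)$. The plan is to establish faithfulness by a separation argument. Fullness is obtained by transporting a spectral frame morphism back to an algebra homomorphism through the isomorphisms $\phi$ of Theorem \ref{rep theorem}.

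\emph{Faithfulness.} Suppose $h_1\neq h_2\colon A\to A'$, say $h_1(a)\neq h_2(a)$. Without loss of generality $h_1(a)\not\leq h_2(a)$. By the Prime Filter Theorem applied to ${\uparrow}h_1(a)$ and ${\downarrow}h_2(a)$, as in the injectivity part of Theorem \ref{rep theorem}, there is a prime filter $x\in\mathfrak{P}(A')$ with $h_1(a)\in x$ and $h_2(a)\notin x$. Then $a\in h_1^{-1}[x]$ but $a\notin h_2^{-1}[x]$, so $\mathcal{S}_1(h_1)(x)\neq\mathcal{S}_1(h_2)(x)$.

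\emph{Fullness.} Let $f\colon\mathcal{S}_0(A')\to\mathcal{S}_0(A)$ be a spectral frame morphism. By Lemma \ref{spec to homo}, $f^{-1}\colon\Omega(\mathcal{S}_0(A))\to\Omega(\mathcal{S}_0(A'))$ is a homomorphism of S4 De Morgan algebras. Write $\phi_A,\phi_{A'}$ for the isomorphisms of Theorem \ref{rep theorem} and define
\[h:=\phi_{A'}^{-1}\circ f^{-1}\circ\phi_A\colon A\to A'.\]
This is a composite of homomorphisms, hence a homomorphism; concretely $\phi_{A'}(h(a))=f^{-1}[\phi_A(a)]$ for every $a\in A$. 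It remains to check that $h^{-1}=f$ as maps on prime filters. For $x\in\mathfrak{P}(A')$ and $a\in A$,
\[a\in h^{-1}[x]\iff h(a)\in x\iff x\in\phi_{A'}(h(a))=f^{-1}[\phi_A(a)]\iff f(x)\in\phi_A(a)\iff a\in f(x).\]
Hence $h^{-1}[x]=f(x)$, so $\mathcal{S}_1(h)=f$.

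The only point needing care is the bookkeeping in the fullness step. We must confirm that $\phi_{A'}^{-1}$ is applicable, i.e.\ that $f^{-1}[\phi_A(a)]$ really lies in $\Omega(\mathcal{S}_0(A'))$; this follows since $f$ is a spectral map. By surjectivity of $\phi_{A'}$ (Theorem \ref{rep theorem}), that set is then of the form $\phi_{A'}(b)$ for a unique $b$. Preservation of $-$ and $\nabla$ by $h$ comes directly from Lemma \ref{spec to homo} together with $\phi(-a)=\phi(a)^*$ and $\phi(\nabla a)=\nabla_{R_A}\phi(a)$ (Lemma \ref{lemma 3.11}). Beyond that, everything is routine unwinding of definitions.
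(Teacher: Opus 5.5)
Your proof is correct. The faithfulness half coincides with the paper's: you separate $h_1(a)$ from $h_2(a)$ by a prime filter of $A'$ using the Prime Filter Theorem.

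For fullness you take a different route. The paper starts from arbitrary S4 De Morgan spectral spaces $X,X'$ and a morphism $f\colon X\to X'$. It applies $\mathcal{S}_1$ to the homomorphism $f^{-1}\colon\mathcal{A}_0(X')\to\mathcal{A}_0(X)$, then appeals to the homeomorphisms $\psi$ of Theorem~\ref{homeo} and the relational isomorphism of Theorem~\ref{relaitonal iso} to conclude $\mathcal{S}_1(f^{-1})=f$. You instead conjugate $f^{-1}$ by the representation isomorphisms $\phi_A,\phi_{A'}$ of Theorem~\ref{rep theorem} and verify $h^{-1}[x]=f(x)$ by a direct element chase.

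Your version proves fullness exactly as stated. For a given morphism $\mathcal{S}_0(A')\to\mathcal{S}_0(A)$ it produces a preimage in $\mathrm{Hom}(A,A')$ with equality on the nose, using only Theorem~\ref{rep theorem} and Lemma~\ref{spec to homo}. It also shows that the relational isomorphism plays no role at this step, since only equality of the underlying functions has to be checked.

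The paper's argument, read literally, yields a homomorphism $\mathcal{A}_0(X')\to\mathcal{A}_0(X)$ rather than one $A\to A'$. Its conclusion also equates maps with different domains, so it holds only modulo $\psi_X,\psi_{X'}$. What it really records is the naturality identity $\mathcal{S}_1(f^{-1})\circ\psi_X=\psi_{X'}\circ f$ (computed in the proof of Theorem~\ref{duality}), which is what the natural isomorphism $\mathbf{id}_{\mathbf{S4DMSpec}}\to\mathcal{S}_F\circ\mathcal{A}_F$ needs. Extracting fullness from it still requires three further steps: specialize to $X=\mathcal{S}_0(A')$, $X'=\mathcal{S}_0(A)$; transport along $\phi$; and note that $\mathcal{S}_1(\phi_{A'}^{-1})=\psi_{\mathcal{S}_0(A')}$. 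Together these amount to your construction.

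One caveat affects both arguments equally. The identity $\phi(\nabla a)=\nabla_{R_A}\phi(a)$ that you cite from Lemma~\ref{lemma 3.11} is true. However, the paper's proof of the inclusion $\phi(\nabla a)\subseteq\nabla_{R_A}\phi(a)$ is invalid: it distributes a universal quantifier over a disjunction. A correct argument takes $x\in\phi(\nabla a)$ and applies the Prime Filter Theorem to ${\uparrow}a$ and the ideal $\{b\in A:\nabla b\notin x\}$. This is an ideal because $\nabla$ is additive and $x$ is prime, and it is disjoint from ${\uparrow}a$ because $\nabla a\in x$. The result is some $y\in\phi(a)$ with $yR_Ax$.
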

\begin{proof}
  To see that $\mathcal{S}_F$ is faithful, let $A$ and $A'$ be S4 De Morgan algebras and let $h_1,h_2\colon A\to A'$ be homomorphisms. Assume $h_1\not= h_2$ so there exists $a\in A$ such that $h_1(a)\not= h_2(a)$. Without loss of generality, assume $h_1(a)\not\leq h_2(a)$. Then consider ${\uparrow}h_1(a)=\{b\in A':h_1(a)\leq b\}$ and ${\downarrow}h_2(a)=\{b\in A':b\leq h_2(a)\}$, which are the principal filter generated by $h_1(a)$ and the principal ideal generated by $h_2(a)$, respectively. Since $h_1(a)\not\leq h_2(a)$ implies ${\uparrow}h_1(a)\cap{\downarrow}h_2(a)=\{b\in A':h_1(a)\leq b\hspace{.1cm}\&\hspace{.1cm}b\leq h_2(a)\}=\emptyset$, the Prime Filter Theorem guarantees the existence of some $x_p\in\mathfrak{P}(A')$ such that ${\uparrow}h_1(a)\subseteq x_p$ with $h_2(a)\not\in x_p$. Hence $a\in h^{-1}_1[x_p]$ but $a\not\in h^{-1}_2[x_p]$ so $h^{-1}_1[x_p]\not=h^{-1}_2[x_p]$ and hence $\mathcal{S}_1(h_1)=h^{-1}_1\not=h^{-1}_2=\mathcal{S}_1(h_2)$.    

    To see that the faithful functor $\mathcal{S}_F$ is also a full functor, let $X$ and $X'$ be S4 De Morgan spectral spaces and let $f\colon X\to X'$ be a spectral frame morphism. By Lemma \ref{spec to homo}, there exists an S4 De Morgan algebra homomorphism $h\colon\mathcal{A}_0(X')\to\mathcal{A}_0(X)$ defined by $h=f^{-1}$ and by Lemma \ref{homo to spec}, there exists a spectral frame morphism $\mathcal{S}_1(h)\colon\mathcal{S}_0(\mathcal{A}_0(X))\to\mathcal{S}_0(\mathcal{A}_0(X'))$. Then by Theorem \ref{homeo} and Theorem \ref{relaitonal iso}, it follows that $X$ is relationally homeomorphic to $\mathcal{S}_0(\mathcal{A}_0(X))$ and $X'$ is relationally homeomorphic to $\mathcal{S}_0(\mathcal{A}_0(X'))$, which implies $\mathcal{S}_1(h)=f$.   
\end{proof}

\begin{lemma}\label{fully faithful 2}
    $\mathcal{A}_F=\langle \mathcal{A}_0,\mathcal{A}_1\rangle\colon\mathbf{S4DMSpec}\to\mathbf{S4DM}$ is fully faithful. 
\end{lemma}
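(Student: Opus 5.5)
We prove that $\mathcal{A}_F$ is faithful and full separately, mirroring the proof of Lemma \ref{fully faithful 1} but working on the space side.

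\textbf{Faithfulness.} Let $X,X'$ be S4 De Morgan spectral spaces and let $f_1,f_2\colon X\to X'$ be spectral frame morphisms with $f_1\neq f_2$. Choose $x\in X$ with $f_1(x)\neq f_2(x)$. Since $X'$ is $T_0$, the specialization order $\leqslant$ is a partial order, so without loss of generality $f_1(x)\not\leqslant f_2(x)$. By Lemma \ref{separation property} there is $U\in\Omega(X')$ with $f_1(x)\in U$ and $f_2(x)\notin U$. Then $x\in f_1^{-1}[U]$ and $x\notin f_2^{-1}[U]$. Hence $\mathcal{A}_1(f_1)(U)\neq\mathcal{A}_1(f_2)(U)$, and so $\mathcal{A}_1(f_1)\neq\mathcal{A}_1(f_2)$.

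\textbf{Fullness.} Let $h\colon\mathcal{A}_0(X')\to\mathcal{A}_0(X)$ be an S4 De Morgan homomorphism. We seek a spectral frame morphism $f\colon X\to X'$ with $f^{-1}=h$.
\begin{enumerate}
\item By Lemma \ref{homo to spec}, $\mathcal{S}_1(h)=h^{-1}\colon\mathcal{S}_0(\mathcal{A}_0(X))\to\mathcal{S}_0(\mathcal{A}_0(X'))$ is a spectral frame morphism.
\item Let $\psi_X$ and $\psi_{X'}$ be the maps of Theorem \ref{homeo}. They are homeomorphisms, they preserve $g$ by Theorem \ref{theorem 3.16}, and they preserve $R$ by Theorem \ref{relaitonal iso}. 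Define
\[f=\psi_{X'}^{-1}\circ h^{-1}\circ\psi_X .\]
\item Concretely, $f(x)$ is the unique $y\in X'$ with $\Omega_{X'}(y)=\{V\in\Omega(X'):x\in h(V)\}$. This set is a prime filter because $h$ is a bounded lattice homomorphism, and such a $y$ exists and is unique by sobriety of $X'$ and Theorem \ref{homeo}.
\item For every $V\in\Omega(X')$ we then have $y\in V$ iff $x\in h(V)$. Hence $f^{-1}[V]=h(V)$, that is, $\mathcal{A}_1(f)=h$.
\end{enumerate}

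\textbf{Checking that $f$ is a spectral frame morphism.}
\begin{enumerate}
\item \emph{Spectral map:} $f^{-1}[V]=h(V)\in\Omega(X)$ for every $V\in\Omega(X')$.
\item \emph{Condition (3) of Definition \ref{spectral frame morphism}:} $f^{-1}[\nabla_RV]=h(\nabla_RV)=\nabla_Rh(V)=\nabla_Rf^{-1}[V]$, using that $h$ preserves $\nabla$.
\item \emph{Condition (2):} Let $V\in\Omega(X')$. Then $f(g(x))\in V$ iff $g(x)\in h(V)$ iff $x\notin h(V)^*=h(V^*)$ iff $f(x)\notin V^*$ iff $g(f(x))\in V$. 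So $f(g(x))$ and $g(f(x))$ lie in exactly the same compact open sets. By Lemma \ref{separation property} and antisymmetry of $\leqslant$, they are equal.
\end{enumerate}

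The main obstacle is the fullness step. One must carefully confirm that the composite $f$ is well defined and satisfies $f^{-1}=h$ on compact opens. The remaining verifications then follow from $h$ preserving $^*$ and $\nabla$, together with the separation property.
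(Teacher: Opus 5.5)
Your proof is correct. The faithfulness half is the same as the paper's: you separate $f_1(x)$ from $f_2(x)$ by a compact open set using Lemma~\ref{separation property}.

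For fullness you take a genuinely different route.

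\emph{The paper's route.} It starts from arbitrary algebras $A,A'$ and a homomorphism $h\colon A\to A'$. It passes to $\mathcal{S}_1(h)=h^{-1}$ and then back to $\mathcal{A}_1(\mathcal{S}_1(h))$, and invokes the representation theorem (Theorem~\ref{rep theorem}) to conclude that this ``is'' $h$. Read literally, this only says $\mathcal{A}_1(\mathcal{S}_1(h))\circ\phi_A=\phi_{A'}\circ h$. That is the naturality of $\phi$, the same fact reused in Theorem~\ref{duality}. It never takes a given $h\colon\mathcal{A}_0(X')\to\mathcal{A}_0(X)$ and produces a morphism $X\to X'$.

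\emph{Your route.} You start from exactly the data fullness asks about. You conjugate $\mathcal{S}_1(h)$ by the homeomorphisms of Theorem~\ref{homeo} to get $f=\psi_{X'}^{-1}\circ h^{-1}\circ\psi_X$, and verify $f^{-1}[V]=h(V)$ on the nose. You then check the three morphism conditions directly from three facts: $h$ preserves $^*$, $h$ preserves $\nabla$, and compact opens separate points.

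\emph{What each buys.} Your argument rests on the realization theorem for spaces rather than the representation theorem for algebras, and that is the ingredient fullness of $\mathcal{A}_F$ actually needs. Because you verify conditions (2) and (3) of a spectral frame morphism directly, you never need $\psi_{X'}^{-1}$ to be a spectral frame morphism itself. So your appeals to Theorems~\ref{theorem 3.16} and~\ref{relaitonal iso} in step 2 are harmless but unused. The paper's version is shorter, but it leaves these identifications implicit.
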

\begin{proof}
    Let $X$ and $X'$ be S4 De Morgan spectral spaces and let $f_1,f_2\colon X\to X'$ be spectral frame morphisms. Assume $f_1\not=f_2$ so there exists $x\in X$ such that $f_1(x)\not= f_2(x)$. Without loss of generality, suppose $f_1(x)\not\leqslant f_2(x)$. Then by Lemma \ref{separation property}, there exists some $U\in\Omega(X')$ such that $f_1(x)\in U$ and $f_2(x)\not\in U$ and hence $U\in f_1(x)$ and $U\not\in f_2(x)$. This implies that $x\in f_1^{-1}[U]$ but $x\not\in f_2^{-1}[U]$ and thus $f_1^{-1}[U]\not\subseteq f_2^{-1}[U]$ so $\mathcal{A}_1(f_1)=f_1^{-1}\not= f_2^{-1}=\mathcal{A}_1(f_2)$. Hence $\mathcal{A}_F$ is faithful. 

    To see that the faithful functor $\mathcal{A}_F$ is full, let $A$ and $A'$ be S4 De Morgan algebras and let $h\colon A\to A'$ be a homomorphism. By Lemma \ref{homo to spec}, there exists a spectral frame morphism $f\colon\mathcal{S}_0(A')\to\mathcal{S}_0(A)$ defined by $f=h^{-1}$ and by Lemma \ref{spec to homo}, there exists an S4 De Morgan algebra homomorphism $\mathcal{A}_1(f)\colon\mathcal{A}_0(\mathcal{S}_0(A))\to\mathcal{A}_0(\mathcal{S}_0(A'))$. Then by Theorem \ref{rep theorem}, it follows that $A$ is isomorphic to $\mathcal{A}_0(\mathcal{S}_0(A))$ and $A'$ is isomorphic to $\mathcal{A}_0(\mathcal{S}_0(A'))$, which implies $\mathcal{A}_1(f)=h$. 
\end{proof}
\begin{theorem}\label{duality}
    $\mathbf{S4DM}$ is dually equivalent to $\mathbf{S4DMSpec}$
\end{theorem}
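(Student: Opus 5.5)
To prove Theorem \ref{duality}, I will show that the contravariant functors $\mathcal{S}_F$ and $\mathcal{A}_F$ are quasi-inverse. Concretely, I will exhibit natural isomorphisms $\Phi\colon 1_{\mathbf{S4DM}}\Rightarrow\mathcal{A}_F\circ\mathcal{S}_F$ and $\Psi\colon 1_{\mathbf{S4DMSpec}}\Rightarrow\mathcal{S}_F\circ\mathcal{A}_F$. An alternative would be to use Lemmas \ref{fully faithful 1} and \ref{fully faithful 2} together with essential surjectivity. However, a natural-isomorphism argument is cleaner and makes the naturality explicit, so I will use that.

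The first step concerns the algebra side. For each S4 De Morgan algebra $A$, I take $\Phi_A=\phi\colon A\to\mathcal{A}_0(\mathcal{S}_0(A))$, which is an isomorphism in $\mathbf{S4DM}$ by Theorem \ref{rep theorem}. For naturality, fix a homomorphism $h\colon A\to A'$. I must check that $\mathcal{A}_1(\mathcal{S}_1(h))\circ\phi_A=\phi_{A'}\circ h$. Unwinding the definitions, $\mathcal{A}_1(\mathcal{S}_1(h))(\phi_A(a))=(h^{-1})^{-1}[\phi_A(a)]$. The computation in the proof of Lemma \ref{homo to spec} identifies this set as $\phi_{A'}(h(a))$, which gives naturality immediately.

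The second step concerns the space side. For each S4 De Morgan spectral space $X$, I take $\Psi_X=\psi\colon X\to\mathcal{S}_0(\mathcal{A}_0(X))$. By Theorem \ref{homeo}, $\psi$ is a homeomorphism. By Theorem \ref{relaitonal iso}, it is a relational isomorphism for $R$, and by Theorem \ref{theorem 3.16}, it commutes with $g$. To see that $\psi$ is an isomorphism in $\mathbf{S4DMSpec}$, I verify that both $\psi$ and $\psi^{-1}$ are spectral frame morphisms. Spectrality holds because $\psi^{-1}[\phi(U)]=U$ and $\psi[U]=\phi(U)$. Commutation with $g$ transfers to $\psi^{-1}$ by applying $\psi^{-1}$ to both sides of Theorem \ref{theorem 3.16}. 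For condition (3) of Definition \ref{spectral frame morphism}, a bijective relational isomorphism preserves direct images: $\psi^{-1}[R'[V]]=R[\psi^{-1}[V]]$ for any $V$, and likewise for $\psi$. For naturality, fix a spectral frame morphism $f\colon X\to X'$. I need $\mathcal{S}_1(\mathcal{A}_1(f))(\psi_X(x))=\psi_{X'}(f(x))$, that is,
\[\{U\in\Omega(X'):f^{-1}[U]\in\psi_X(x)\}=\{U\in\Omega(X'):f(x)\in U\},\]
which is immediate, since $x\in f^{-1}[U]$ iff $f(x)\in U$.

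I expect the main obstacle to be the second step. The difficulty is ensuring that $\psi$ is genuinely an isomorphism in the category, and not merely a homeomorphism with the right structure. In particular, condition (3) for $\psi^{-1}$ must be checked on all compact open sets of the target. Since every compact open set of $\mathcal{S}_0(\mathcal{A}_0(X))$ has the form $\phi(U)$, it suffices to verify $\psi^{-1}[\nabla_{R_{\Omega(X)}}\phi(U)]=\nabla_R U$. Using the identity $\nabla_{R_{\Omega(X)}}\phi(U)=\phi(\nabla_R U)$ from Lemma \ref{lemma 3.11}, this reduces to $\psi^{-1}[\phi(\nabla_RU)]=\nabla_RU$, which holds by the spectrality computation above. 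With both natural isomorphisms in hand, $\mathcal{S}_F$ and $\mathcal{A}_F$ constitute a dual equivalence between $\mathbf{S4DM}$ and $\mathbf{S4DMSpec}$.
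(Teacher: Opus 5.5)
Your proposal is correct and follows the paper's proof. Both take $\phi$ and $\psi$ as the components of the natural isomorphisms and verify naturality by the same computations, $(h^{-1})^{-1}[\phi(a)]=\phi(h(a))$ and $(f^{-1})^{-1}[\psi(x)]=\psi(f(x))$.

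You go further than the paper by actually checking that $\psi$ and $\psi^{-1}$ are spectral frame morphisms; the paper only asserts that $\psi$ is a ``relational homeomorphism''. One small slip: the check in your last paragraph is condition (3) for $\psi$, not for $\psi^{-1}$, and your general direct-image argument for bijective relational isomorphisms already covers it.
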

\begin{proof}
   In the proof of Lemma \ref{homo to spec} we have observed that $(h^{-1})^{-1}[\phi(a)]=\phi(h(a))$ so that $\mathcal{A}_1(\mathcal{S}_1(h))[\phi(a)]=\phi(h(a))$ for all $a\in A$. Hence it remains to verify that for every $x\in X$, we have $(f^{-1})^{-1}[\psi(x)]=\psi(f(x))$ so that $\mathcal{S}_1(\mathcal{A}_1(f))[\psi(x)]=\psi(f(x))$. The calculation proceeds in the following manner: 
\begin{align*}
    (f^{-1})^{-1}[\psi(x)]&=\{U\in\Omega(X):f^{-1}[U]\in\psi(x)\}
    \\&=\{U\in\Omega(X):x\in f^{-1}[U]\}
    \\&=\{U\in\Omega(X):f(x)\in U\}
    \\&=\psi(f(x)). 
\end{align*} 
Since $\phi$ is an isomorphism and $\psi$ is a relational homeomorphism, the results collected so far imply the existence of natural isomorphisms $\mathbf{id}_{\mathbf{S4DM}}\to\mathcal{A}_F\circ\mathcal{S}_F$ and $\mathbf{id}_{\mathbf{S4DMSpec}}\to\mathcal{S}_F\circ\mathcal{A}_F$ so that $\mathcal{S}_F=\langle \mathcal{S}_0,\mathcal{S}_1\rangle\colon\mathbf{S4DM}\to\mathbf{S4DMSpec}$ together with $\mathcal{A}_F=\langle \mathcal{A}_0,\mathcal{A}_1\rangle\colon\mathbf{S4DMSpec}\to\mathbf{S4DM}$ provide a dual equivalence between $\mathbf{S4DM}$ and $\mathbf{S4DMSpec}$.  
\end{proof}
If $\langle X;\leq,g,R\rangle$ is an S4 De Morgan frame, we call the reduct $\langle X;\leq,g\rangle$ a \emph{De Morgan frame} provided $\langle X;\leq\rangle$ is a poset and $g\colon X\to X$ is an order-inverting involution. An obvious restriction to Definition \ref{modal de morgan spectral space} the yields the spectral duals of general De Morgan algebras. 
\begin{definition}
    A \emph{De Morgan spectral space} is a topological space $\langle X;g,\tau\rangle$ satisfying the following conditions: 
    \begin{enumerate}
        \item $\langle X;\tau\rangle$ is a spectral space; 
        \item $\langle X;\leqslant,g\rangle$ is a De Morgan frame; 
        \item if $U\in\Omega(X)$, then $U^*\in\Omega(X)$. 
    \end{enumerate}
\end{definition}
\begin{definition}\label{spectral frame morphism}
    Let $X$ and $X'$ be De Morgan spectral spaces. Then a function $f\colon X\to X'$ is a \emph{spectral frame morphism} provided:
    \begin{enumerate}
        \item $f$ is a spectral map; 
        \item $f(g(x))=g(f(x))$.   
    \end{enumerate}
\end{definition}

Letting $\mathbf{DMSpec}$ denote the category of De Morgan spectral spaces and spectral frame morphisms, an obvious consequence of Theorem \ref{duality} is the following. 
\begin{theorem}\label{dem duality}
    $\mathbf{DM}$ is dually equivalent to $\mathbf{DMSpec}$. 
\end{theorem}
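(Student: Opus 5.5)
The plan is to obtain the duality by restricting the whole construction of Theorem \ref{duality} to the $\nabla$-free and $R$-free reducts. The argument should run exactly as before, with every step involving $\nabla$, $R$, $\nabla_R$ or Definition \ref{modal de morgan spectral space}(4) simply deleted.

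First, for a De Morgan algebra $A$, I take $\mathcal{S}_0(A)=\langle\mathfrak{P}(A);g_A,\tau(\beta)\rangle$. Lemma \ref{lemma 3.8} never used $\nabla$, so this is a spectral space ordered by inclusion. The first half of the proof of Lemma \ref{lemma 3.9} uses only $-$ and shows that $g_A$ is a well-defined order-inverting involution on prime filters. The computation $\phi(a)^*=\phi(-a)$ from Lemma \ref{lemma 3.11} gives condition (3). Hence $\mathcal{S}_0(A)$ is a De Morgan spectral space.

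Conversely, for a De Morgan spectral space $X$, the first two paragraphs of the proof of Lemma \ref{space to algebra} show that $\langle\Omega(X);\cap,\cup,^*,\emptyset,X\rangle$ is a De Morgan algebra. Theorem \ref{rep theorem} then gives $A\cong\mathcal{A}_0(\mathcal{S}_0(A))$ via $\phi$, with the clause for $\nabla$ dropped. Theorems \ref{homeo} and \ref{theorem 3.16} give the homeomorphism $\psi$ commuting with the involutions. Theorem \ref{homeo} needs the separation property, so I check that Lemma \ref{separation property} depends only on coherence and the $T_0$ property, and therefore holds verbatim for De Morgan spectral spaces.

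For morphisms, I use the morphism parts of Lemmas \ref{homo to spec} and \ref{spec to homo}: $h^{-1}$ is a spectral map commuting with $g$, and $f^{-1}$ is a bounded lattice homomorphism preserving $^*$. Composition of morphisms is handled by the relevant half of Proposition \ref{composition}. Naturality follows from the identities $(h^{-1})^{-1}[\phi(a)]=\phi(h(a))$ and $(f^{-1})^{-1}[\psi(x)]=\psi(f(x))$ computed in the proof of Theorem \ref{duality}. Together these give the natural isomorphisms $\mathbf{id}\to\mathcal{A}_F\circ\mathcal{S}_F$ and $\mathbf{id}\to\mathcal{S}_F\circ\mathcal{A}_F$.

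The only point needing real care, and the main obstacle, is confirming that the deleted modal hypotheses were never used implicitly in the parts that remain. Two places need checking: surjectivity of $\psi$ via sobriety, and the fact that every compact open set of $\mathfrak{P}(A)$ is some $\phi(a)$. Both are purely lattice-theoretic facts about spectral spaces from \cite{stone2}, so they transfer unchanged.
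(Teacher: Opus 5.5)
Your proposal is correct and takes the same route as the paper, which also obtains this theorem by restricting the S4 construction (Lemma~\ref{lemma 3.11}, Lemma~\ref{space to algebra}, Theorems~\ref{rep theorem}, \ref{homeo} and \ref{duality}) to the $\nabla$- and $R$-free reducts. Your version is slightly more careful than the paper's: you explicitly cite Theorem~\ref{theorem 3.16} to make $\psi$ an isomorphism in $\mathbf{DMSpec}$, and you observe that the two natural isomorphisms already give the dual equivalence without the separate full-faithfulness lemmas.
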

\begin{proof}
    If $A$ is a De Morgan algebra, Lemma \ref{lemma 3.11} provides a De Morgan spectral space $\mathcal{S}_0(A)=\langle\mathfrak{P}(A);g_A,\tau(\beta)\rangle$ and if $X$ is a De Morgan spectral space, Lemma \ref{space to algebra} induces a De Morgan algebra $\mathcal{A}_0(X)=\langle\Omega(X);\cap,\cup,^*,\emptyset,X\rangle$. By Theorems \ref{rep theorem} and \ref{homeo}, $A$ is isomorphic to $\mathcal{A}_0(\mathcal{S}_0(A))$ and $X$ is homeomorphic to $\mathcal{S}_0(\mathcal{A}_0(X))$. Lemmas \ref{fully faithful 1} and \ref{fully faithful 2} then guarantee the existence of fully faithful contravariant functors $\mathcal{S}_F=\langle\mathcal{S}_0,\mathcal{S}_1\rangle\colon\mathbf{DM}\to\mathbf{DMSpec}$ and $\mathcal{A}_F=\langle\mathcal{A}_0,\mathcal{A}_1\rangle\colon\mathbf{DMSpec}\to\mathbf{DM}$ with Theorem \ref{duality} providing the desired natural isomorphisms $\mathbf{id}_{\mathbf{DM}}\to\mathcal{A}_F\circ\mathcal{S}_F$ and $\mathbf{id}_{\mathbf{DMSpec}}\to\mathcal{S}_F\circ\mathcal{A}_F$ so that $\mathbf{DM}$ is dually equivalent to $\mathbf{DMSpec}$.    
\end{proof}
\section{Spectral Duality for De Morgan Groupoids}
In this section, we extend Theorem \ref{dem duality} to that of a dual equivalence between the category $\mathbf{DMGrp}$ of De Morgan groupoids and the category $\mathbf{DMGrpSpec}$ of De Morgan groupoid spectral spaces. 
\subsection{De Morgan Groupoids and DMGrp-Spaces}
By a \emph{unitial algebra} we mean a binar $\langle A;\cdot\rangle$ with a constant $0\in A$ satisfying $a\cdot 0=0\cdot a=0$ for all $a\in A$.    
\begin{definition}\label{dem groupoid}
A \emph{De Morgan groupoid} is an algebra $\langle A;\wedge,\vee,-,\cdot,\rightarrow,t,0,1\rangle$ satisfying the following conditions: 
\begin{enumerate}
    \item $\langle A;\wedge,\vee,-,0,1\rangle$ is a De Morgan algebra; 
    \item $\langle A;\cdot,t\rangle$ is a groupoid, i.e., $t\cdot a=a$; 
    \item $\langle A;\cdot,0\rangle$ is a unitial algebra; 
    \item $a\cdot(b\vee c)=(a\cdot b)\vee(a\cdot c)$; 
    \item $(a\vee b)\cdot c=(a\cdot c)\vee(b\cdot c)$; 
    \item $a\cdot b\leq c\Leftrightarrow a\leq b\rightarrow c$. 
\end{enumerate}
\end{definition}

\begin{remark}
 Observe that the $-$-free reduct $\langle A;\wedge,\vee,\circ,\rightarrow,t,0,1\rangle$ of a De Morgan groupoid corresponds to the bounded extensions of the positive Ackermann groupoids introduced by Routley and Meyer \cite{meyer}. Moreover, if $A$ is a De Morgan groupoid, then requiring that $-\colon A\to A$ merely be dual lattice homomorphism (i.e., an order inverting operation on $A$), then one arrives at the relevance algebras introduced by Urquhart \cite{urq}.  
\end{remark}
\begin{definition}\label{definition 4.3}
    If A is a De Morgan groupoid and $x,y\subseteq A$ are filters, let: 
    \[x\odot y=\{c\in A:a\cdot b\leq c\hspace{.2cm}\text{for some $a\in x$ and $b\in y$}\}.\]
\end{definition}
\begin{lemma}\label{lemma 4.3}
    Let $A$ be a De Morgan groupoid and let $\mathfrak{F}(A)$ denote the collection of all filters on $A$. Then the following conditions are satisfied: 
\begin{enumerate}
    \item if $x,y\in\mathfrak{F}(A)$, then $x\odot y\in\mathfrak{F}(A)$;  
    \item if $x,y\in\mathfrak{F}(A)$ and $z\in\mathfrak{P}(A)$ such that $x\odot y\subseteq z$, there exists $x',y'\in\mathfrak{P}(A)$ such that $x\subseteq x'$, $y\subseteq y'$, and $x'\odot y\subseteq z$. 
\end{enumerate}
 \end{lemma}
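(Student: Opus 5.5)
We prove the two parts separately. Throughout we use one preliminary fact: $\cdot$ is monotone in each argument. This follows from Definition \ref{dem groupoid}(4),(5). If $a\leq a'$, then $a'=a\vee a'$, so $a'\cdot b=(a\cdot b)\vee(a'\cdot b)\geq a\cdot b$. The right argument is handled symmetrically.

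For part (1), let $x,y\in\mathfrak{F}(A)$. The set $x\odot y$ is nonempty, since for any $a\in x$ and $b\in y$ we have $a\cdot b\in x\odot y$. It is upward closed directly from Definition \ref{definition 4.3}. For closure under meets, suppose $a_1\cdot b_1\leq c_1$ and $a_2\cdot b_2\leq c_2$ with $a_i\in x$ and $b_i\in y$. Put $a=a_1\wedge a_2\in x$ and $b=b_1\wedge b_2\in y$. Monotonicity gives $a\cdot b\leq a_i\cdot b_i\leq c_i$ for $i=1,2$, hence $a\cdot b\leq c_1\wedge c_2$ and $c_1\wedge c_2\in x\odot y$.

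For part (2), we read the conclusion as asking for prime $x',y'$ with $x'\odot y'\subseteq z$. (The literal form $x'\odot y\subseteq z$ is also obtained at the intermediate stage.) We proceed in two Zorn-type steps.

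\emph{Step 1: enlarging $x$.} Let $\mathcal{C}=\{w\in\mathfrak{F}(A): x\subseteq w,\ w\odot y\subseteq z\}$, ordered by inclusion. It is nonempty because $x\in\mathcal{C}$. It is closed under unions of chains, because membership of an element in $w\odot y$ is witnessed by a single pair $a\in w$, $b\in y$. By Zorn's lemma $\mathcal{C}$ has a maximal element $x'$.

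We check that $x'$ is proper. If $0\in x'$, then for any $b\in y$ we get $0\cdot b=0\in x'\odot y\subseteq z$ by the unitial law, contradicting that $z$ is proper.

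We check that $x'$ is prime; this is the main step. Suppose $a\vee b\in x'$ but $a,b\notin x'$. The filter generated by $x'\cup\{a\}$ is $\{u: u\geq c\wedge a \text{ for some } c\in x'\}$, and it strictly extends $x'$. By maximality it fails the defining condition of $\mathcal{C}$, so there are $c\in x'$ and $d\in y$ with $(c\wedge a)\cdot d\notin z$. The same argument for $b$ gives $c'\in x'$ and $d'\in y$ with $(c'\wedge b)\cdot d'\notin z$.

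Set $e=c\wedge c'\in x'$ and $f=d\wedge d'\in y$. By monotonicity and the upward closure of $z$, both $(e\wedge a)\cdot f\notin z$ and $(e\wedge b)\cdot f\notin z$. On the other hand $e\wedge(a\vee b)\in x'$, so $(e\wedge(a\vee b))\cdot f\in z$. By distributivity of the lattice and Definition \ref{dem groupoid}(5),
\[(e\wedge(a\vee b))\cdot f=((e\wedge a)\cdot f)\vee((e\wedge b)\cdot f).\]
Since $z$ is prime, one of the two joinands lies in $z$, which is a contradiction. Hence $x'$ is a prime filter with $x\subseteq x'$ and $x'\odot y\subseteq z$.

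\emph{Step 2: enlarging $y$.} Now fix $x'$ and repeat the argument on the right. Consider filters $w\supseteq y$ with $x'\odot w\subseteq z$ and take a maximal one $y'$ by Zorn's lemma. Properness follows from $a\cdot 0=0$. Primeness follows exactly as in Step 1, now using right distributivity, Definition \ref{dem groupoid}(4), in place of (5). This yields prime filters $x',y'$ with $x\subseteq x'$, $y\subseteq y'$ and $x'\odot y'\subseteq z$.

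The main obstacle is the primeness argument in Step 1. It requires combining the two failure witnesses into a single pair $(e,f)$, which uses monotonicity of $\cdot$ and the upward closure of $z$, and then splitting the product by distributivity so that the primeness of $z$ applies.
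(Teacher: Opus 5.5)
Your proof is correct. The paper gives no argument of its own for this lemma; it only cites Urquhart's Lemmas 2.1 and 2.2. What you have written is a self-contained replacement for that citation. It verifies directly that the De Morgan groupoid axioms suffice: monotonicity of $\cdot$ from Definition~\ref{dem groupoid}(4),(5), the unitial law for properness, and one-sided distributivity of $\cdot$ over $\vee$ together with primeness of $z$ for primeness of the maximal filter.

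Your strengthened reading of (2), namely $x'\odot y'\subseteq z$, is the right thing to prove, for three reasons:
\begin{enumerate}
\item It implies the literal conclusion, since $x'\odot y\subseteq x'\odot y'$.
\item It yields the two-sided form $x'\odot y\subseteq z$, $x\odot y'\subseteq z$ stated in the later Corollary.
\item It is what the proof of Lemma~\ref{lemma 4.7} actually uses. There $S_A$ must hold for both enlarged prime filters. In the $\rightarrow$ case only the right argument is enlarged; that is your Step 2 run with the given prime $x$, which is legitimate because Step 2 never uses primeness of $x'$.
\end{enumerate}

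If you want a shorter argument, each Zorn step can be replaced by the Prime Filter Theorem already stated in the paper. Let $I=\{a\in A: a\cdot b\notin z \text{ for some } b\in y\}$. This set is an ideal:
\begin{enumerate}
\item it contains $0$, by the unitial law and properness of $z$;
\item it is downward closed, by monotonicity;
\item it is closed under joins: pass to $b_1\wedge b_2\in y$, apply Definition~\ref{dem groupoid}(5), and use primeness of $z$.
\end{enumerate}
Moreover $x\cap I=\emptyset$ exactly when $x\odot y\subseteq z$. So any prime filter $x'\supseteq x$ avoiding $I$ satisfies $x'\odot y\subseteq z$. The mirror-image ideal built from $x'$ using Definition~\ref{dem groupoid}(4) then produces $y'$.

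This uses the same ingredients as your maximality argument. Your step of combining the two failure witnesses into $(e,f)$ reappears here as join-closure of $I$. Your version buys explicitness; the ideal version buys brevity and reuse of a theorem the paper already has.
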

\begin{proof}
    The result follows from \cite[Lemma 2.1, Lemma 2.2]{urq}. 
\end{proof}

For the next definition, if $X$ is a set and $S\subseteq X^3$ is a ternary relation, let: 
\[U\circ V=\{z\in X:\exists xy(Sxyz\hspace{.1cm}\&\hspace{.1cm} x\in U\hspace{.1cm}\&\hspace{.1cm} y\in V)\}\]
\[U\rightarrow V=\{x\in X:\forall y((Sxyz\hspace{.1cm}\&\hspace{.1cm} y\in U)\Rightarrow z\in V)\}\]

\begin{definition}\label{demg space}
    A \emph{DMGrp-space} is a relational topological space of the following shape $\langle X;g,S,\Theta,\tau\rangle$ satisfying the following conditions: 
    \begin{enumerate}
        \item $\langle X;g,\tau\rangle$ is a De Morgan spectral space;  
        \item if $U,V\in\Omega(X)$, then $U\circ V\in\Omega(X)$ and $U\rightarrow V\in\Omega(X)$; 
        \item $Sxyz$ and $x'\leqslant x$ and $y'\leqslant y$ and $z\leqslant z'$ implies $Sx'y'z'$; 
        \item if $\overline{S}xyz$, $\exists U,V\in\Omega(X)(x\in U\hspace{.1cm}\&\hspace{.1cm} y\in V\hspace{.1cm}\&\hspace{.1cm} z\not\in U\circ V$); 
        \item $\Theta\in\Omega(X)$ and $\forall y,z(y\leqslant z\leftrightarrow\exists x(x\in \Theta\hspace{.1cm}\&\hspace{.1cm} Sxyz))$. 
        
    \end{enumerate}
\end{definition}

\subsection{Topological Representation of De Morgan Groupoids} In this subsection, we demonstrate that every De Morgan groupoid is isomorophic to the compact open subsets of a DMGrp-space. 
\begin{definition}\label{prime spectrum1}
    Let $A$ be a De Morgan groupoid. The \emph{prime spectrum} of $A$ is a relational topological space $\mathcal{F}_0(A)=\langle\mathfrak{P}(A); g_A,\Theta_A,S_A,\tau(\beta)\rangle$ such that: 
    \begin{enumerate}
        \item $\mathfrak{P}(A)$ is the collection of all prime filters of $A$; 
        \item $g_A(x)=\{a\in A:-a\not\in x\}$; 
        \item $\Theta_A=\{x\in\mathfrak{P}(A):t\in x\}$; 
        \item $S_Axyz\Longleftrightarrow x\odot y\subseteq z$; 
        \item $\tau(\beta)$ is the topology on $\mathfrak{P}(A)$ generated by the basis $\beta$ where: 
        \[\beta=\bigcup_{a\in A}\phi(a)\hspace{.2cm}\text{with}\hspace{.2cm}\phi(a)=\{x\in\mathfrak{P}(A):a\in x\}.\]
    \end{enumerate}
\end{definition}
\begin{lemma}\label{lemma 4.7}
    If $A$ is a De Morgan groupoid, then $\mathcal{S}_0(A)$ is a DMGrp-space. 
\end{lemma}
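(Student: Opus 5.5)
We prove that $\mathcal{F}_0(A)$ (written $\mathcal{S}_0(A)$ in the statement) satisfies the five conditions of Definition \ref{demg space}, reusing as much of Section 3 as possible.

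\textbf{Condition 1.} The $\nabla$-free part of Lemmas \ref{lemma 3.8}, \ref{lemma 3.9} and \ref{lemma 3.11} only uses the De Morgan algebra reduct of $A$. So $\langle\mathfrak{P}(A);g_A,\tau(\beta)\rangle$ is a De Morgan spectral space, with specialization order $\subseteq$ and $\phi(a)^*=\phi(-a)$.

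\textbf{Conditions 3 and 5.} Condition 3 is immediate: if $x'\subseteq x$, $y'\subseteq y$ and $z\subseteq z'$, then $x'\odot y'\subseteq x\odot y\subseteq z\subseteq z'$. For condition 5, note first that $\Theta_A=\phi(t)\in\Omega$. Next, if $t\in x$ and $x\odot y\subseteq z$, then for $b\in y$ we have $t\cdot b=b$, so $b\in x\odot y\subseteq z$, giving $y\subseteq z$. Conversely, if $y\subseteq z$, consider the principal filter ${\uparrow}t$. Then ${\uparrow}t\odot y\subseteq y\subseteq z$, because $a\cdot b\ge t\cdot b=b$ whenever $a\ge t$; monotonicity of $\cdot$ follows from join-distributivity in Definition \ref{dem groupoid}(4),(5). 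Lemma \ref{lemma 4.3}(2) then extends ${\uparrow}t$ to a prime $x'$ with $x'\odot y\subseteq z$ and $t\in x'$.

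\textbf{Condition 2 and condition 4.} The core computations are
\[\phi(a)\circ\phi(b)=\phi(a\cdot b),\qquad \phi(a)\rightarrow\phi(b)=\phi(a\rightarrow b),\]
after which condition 2 follows because every compact open set has the form $\phi(c)$. For $\phi(a)\circ\phi(b)=\phi(a\cdot b)$, the inclusion $\subseteq$ is trivial: $a\in x$, $b\in y$ and $x\odot y\subseteq z$ give $a\cdot b\in z$. For $\supseteq$, given $a\cdot b\in z$, apply Lemma \ref{lemma 4.3}(2) twice to ${\uparrow}a\odot{\uparrow}b\subseteq z$. The first application primes the left filter; the second, via the symmetric version of the lemma (provable by the same argument as in \cite{urq}), primes the right filter. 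This yields primes $x\ni a$ and $y\ni b$ with $S_Axyz$.

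For the arrow (read with the evident bound variable $z$), the inclusion $\phi(a\rightarrow b)\subseteq\phi(a)\rightarrow\phi(b)$ follows from residuation, since $(a\rightarrow b)\cdot a\le b$. The reverse inclusion is the main obstacle. Suppose $a\rightarrow b\notin x$. Put $y={\uparrow}a$ and let $I$ be the ideal $\{c: c\le b\}$. Then $x\odot{\uparrow}a$ misses $I$: otherwise $c\cdot a\le b$ for some $c\in x$, so $c\le a\rightarrow b$ and hence $a\rightarrow b\in x$. The Prime Filter Theorem gives a prime $z\supseteq x\odot{\uparrow}a$ with $b\notin z$, and Lemma \ref{lemma 4.3}(2) on the right argument yields a prime $y\ni a$ with $S_Axyz$. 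Hence $x\notin\phi(a)\rightarrow\phi(b)$. The delicate point is which argument Lemma \ref{lemma 4.3}(2) primes; I expect to need both the left and the right version, justified by the distributivity laws (4) and (5).

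Finally, condition 4 is as in Lemma \ref{lemma 3.11}. If $x\odot y\not\subseteq z$, pick $a\in x$ and $b\in y$ with $a\cdot b\notin z$. Then $x\in\phi(a)$, $y\in\phi(b)$, and $z\notin\phi(a\cdot b)=\phi(a)\circ\phi(b)$.
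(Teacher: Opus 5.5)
Your proposal is correct and follows essentially the same route as the paper. You reduce condition 1 to the De Morgan reduct from Section 3, prove $\phi(a)\circ\phi(b)=\phi(a\cdot b)$ and $\phi(a)\rightarrow\phi(b)=\phi(a\rightarrow b)$ via Lemma \ref{lemma 4.3}(2) and the Prime Filter Theorem, and handle conditions 3--5 by monotonicity, a witness $a\cdot b\leq c$, and $\Theta_A=\phi(t)$ with $t\cdot a=a$. You differ only in adding care the paper skips: priming the left and right arguments in two separate applications of Lemma \ref{lemma 4.3}(2), the ${\uparrow}t$ argument for the existence half of condition 5, and deriving condition 4 from $z\notin\phi(a\cdot b)=\phi(a)\circ\phi(b)$.
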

\begin{proof}
By Lemma \ref{lemma 3.11}, it follows that the reduct $\langle\mathfrak{P}(A);g_A,\tau(\beta)\rangle$ forms a De Morgan spectral space and hence it suffices to verify that conditions 2-5 of Definition \ref{demg space} are satisfied. For condition 2, assume $z\in\phi(a\cdot b)$, so that $a\cdot b\in z$. Now consider the principal filter ${\uparrow}a$ generated by $a$ and the principal filter ${\uparrow}b$ generated by $b$ so that ${\uparrow}a\odot{\uparrow}b\subseteq z$. By Lemma \ref{lemma 4.3}(2), there exists $x,y\in\mathfrak{P}(A)$ such that ${\uparrow}a\subseteq x$ and ${\uparrow}b\subseteq y$ where $a\in x$ and $b\in y$. Therefore we have $S_Axyz$ with $x\in\phi(a)$ and $y\in\phi(b)$ so that $z\in\phi(a)\circ\phi(b)$. Conversely, assume that $z\in\phi(a)\circ\phi(b)$ so that there exists $x,y\in\mathfrak{P}(A)$ such that $x\in\phi(a)$ and $y\in\phi(b)$ with $S_Axyz$. Then $x\odot y\subseteq z$ from which it can be shown that $a\cdot b\in z$, which implies $z\in\phi(a\cdot b)$. Hence we have $\phi(a\cdot b)=\phi(a)\circ\phi(b)$ with $\phi(a\cdot a)\in\Omega(\mathcal{S}_0(A))$ so $\phi(a)\circ\phi(b)\in\Omega(\mathcal{S}_0(A))$.

We now must demonstrate that $\phi(a\rightarrow b)=\phi(a)\rightarrow\phi(b)$. The left-to-right inclusion follows by definition and hence we verify the right-to-left inclusion. Hence assume that $a\rightarrow b\not\in x$ so that $b\not\in x\odot{\uparrow}a$ and let $z\in\mathfrak{P}(A)$ be such that $x\odot{\uparrow}a\subseteq z$ with $b\not\in z$. Observe that such a prime filter exists by the Prime filter Theorem. Then by Lemma \ref{lemma 4.3}(2), there exists some $y\in\mathfrak{P}(A)$ such that $a\in y$ and $S_Axyz$. Hence we conclude $\phi(a)\rightarrow\phi(b)\subseteq\phi(a\rightarrow b)$ and since $\phi(a\rightarrow b)\in\Omega(\mathcal{S}_0(A))$, we have $\phi(a)\rightarrow\phi(b)\in\Omega(\mathcal{S}_0(A))$, so condition 2 is satisfied. 

    For condition 3, assume that $S_Axyz$ with $x'\subseteq x$, $y'\subseteq y$, and $z\subseteq z'$. It suffices to show $x'\odot y'\subseteq z'$ so that $S_Ax'y'z'$. Hence take any $c\in x'\odot y'$ so that there exists $a\in x'$ and $b\in y'$ such that $a\cdot b\leq c$. Then $a\in x$ since $x'\subseteq x$ and $b\in y$ since $y'\subseteq y$. This, together with $a\cdot b\leq c$ implies $c\in x\odot y$ and thus $c\in z$ since $S_Axyz$. Then $c\in z'$ since $z\subseteq z'$ and therefore $x'\odot y'\subseteq z'$ which implies $S_Ax'y'z'$.  

    For condition 4, assume $\overline{S_A}xyz$ so that $x\odot y\not\subseteq z$. Then there exists some $c\in x\odot y$ such that $c\not\in z$ and hence there exists $a\in x$ and $b\in y$ such that $a\cdot b\leq c$. Hence $x\in\phi(a)$ and $y\in\phi(b)$ but $z\not\in \phi(a)\odot\phi(b)$ since $\overline{S_A}xyz$. For part 1 of condition 5, note that $\Theta_A=\{x\in\mathfrak{P}(A):t\in x\}=\phi(t)$ with $\phi(t)\in\Omega(\mathcal{S}_0(A))$. Part 2 of condition 5 follows from the fact that $t\cdot a=a$ for all $a\in A$.    
\end{proof}
\begin{lemma}\label{lemma 4.8}
    If $X$ is a DMGrp-space, then $\mathcal{G}_0(X)=\langle\Omega(X);\cap,\cup,^{*},\circ,\rightarrow,\Theta,\emptyset,X\rangle$ is a De Morgan groupoid. 
\end{lemma}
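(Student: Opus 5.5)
The plan is to check the six clauses of Definition \ref{dem groupoid} for $\mathcal{G}_0(X)$ in turn, using only the clauses of Definition \ref{demg space}. Closure of $\Omega(X)$ under $\circ$ and $\rightarrow$ is Definition \ref{demg space}(2), and $\Theta\in\Omega(X)$ is Definition \ref{demg space}(5). So $\mathcal{G}_0(X)$ is an algebra of the right signature.

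For clause (1) I would observe that the reduct $\langle X;g,\tau\rangle$ is a De Morgan spectral space. The arguments of Lemma \ref{space to algebra} that concern only $\cap,\cup,{}^*,\emptyset,X$ then show that $\langle\Omega(X);\cap,\cup,{}^*,\emptyset,X\rangle$ is a De Morgan algebra. The relation $R$ plays no role in that part of the proof.

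Clauses (3), (4) and (5) should be pure set-theoretic calculations from the definition of $\circ$. Since $\circ$ is an existential image along $S$, it is normal: $U\circ\emptyset=\emptyset\circ U=\emptyset$. For the same reason it distributes over binary unions in each argument: $z\in U\circ(V\cup W)$ iff there are $x,y$ with $Sxyz$, $x\in U$, and $y\in V$ or $y\in W$, which is iff $z\in(U\circ V)\cup(U\circ W)$. The right-hand distributivity is symmetric.

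Clause (6) is residuation. I would read the definition of $U\rightarrow V$ as intended, namely $\{x:\forall y,z((Sxyz\ \&\ y\in U)\Rightarrow z\in V)\}$. Suppose $W\circ U\subseteq V$, and take $x\in W$ with $Sxyz$ and $y\in U$. Then $z\in W\circ U\subseteq V$, so $x\in U\rightarrow V$. Conversely, suppose $W\subseteq U\rightarrow V$ and $z\in W\circ U$, witnessed by $Sxyz$ with $x\in W$ and $y\in U$. Then $x\in U\rightarrow V$ gives $z\in V$. This is the order $a\cdot b\le c\Leftrightarrow a\le b\rightarrow c$ with $a=W$, $b=U$, $c=V$.

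Clause (2), $\Theta\circ U=U$, is the step needing actual topological input, and I expect it to be the main obstacle. Suppose $z\in\Theta\circ U$. Then there are $x\in\Theta$ and $y\in U$ with $Sxyz$, and Definition \ref{demg space}(5) gives $y\leqslant z$. Since $U$ is open, it is an up-set in the specialization order, so $z\in U$. Conversely, for $z\in U$ we have $z\leqslant z$, so Definition \ref{demg space}(5) yields $x\in\Theta$ with $Sxzz$, and hence $z\in\Theta\circ U$. The only delicate point is the use of upward closure of open sets under $\leqslant$, which holds by the definition of the specialization order. Condition (3) of Definition \ref{demg space} is not needed here, but it would be available if one wanted to replace $y$ by an arbitrary $\leqslant$-smaller point.
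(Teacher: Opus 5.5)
Your proposal is correct and follows essentially the same route as the paper. The paper also obtains the De Morgan reduct from Lemma \ref{space to algebra}, checks normality, residuation and the two distributive laws by unfolding $\circ$ and $\rightarrow$, and proves $\Theta\circ U=U$ from Definition \ref{demg space}(5) together with the fact that open sets are up-sets in the specialization order. Your remarks that $R$ plays no role in the De Morgan part, and that the definition of $U\rightarrow V$ needs the missing $\forall z$, make explicit two points the paper leaves implicit.
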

\begin{proof}
   By Lemma \ref{space to algebra}, it follows that $\langle\Omega(X);\cap,\cup,^*,\emptyset,X\rangle$ is a De Morgan algebra and hence it remains to verify that conditions 2-6 of Definition \ref{dem groupoid} are satisfied. First observe that condition 5 of Definition \ref{demg space} guarantees that $\Theta\in\Omega(X)$. It is trivial that $\langle\Omega(X);\circ,\Theta\rangle$ is a unitial algebra since as clearly $U\circ\emptyset=\emptyset\circ U=\emptyset$ for every $U\in\Omega(X)$. To verify that $U\circ V\subseteq W$ iff $U\subseteq V\rightarrow W$, we first assume that $U\circ V\subseteq W$ and $x\in U$. Then if $Sxyz$ with $y\in V$, we have $z\in U\circ V$ and hence $z\in W$ which implies $x\in V\rightarrow W$. Now conversely assume that $U\subseteq V\rightarrow W$ and suppose $z\in U\circ V$. The latter implies that there exists $x\in U$ and $y\in V$ such that $Sxyz$. Our assumption that $U\subseteq V\rightarrow W$ together with the fact that $x\in U$ yields $x\in V\rightarrow W$ and since $Sxyz$ with $y\in V$, we have $z\in W$. 

    To see that $\langle\Omega(X);\circ,\Theta\rangle$ is a groupoid, take any $U\in\Omega(X)$ and assume $z\in\Theta\circ U$ so that there exists $x\in\Theta$ and $y\in U$ such that $Sxyz$. By condition 5 of Definition \ref{demg space}, we have $y\leqslant z$. Since $U\in\Omega(X)$, $U$ is open and since $\leqslant$ is the specialization order of $X$, the fact that $y\in U$ yields $z\in U$ and hence $\Theta\circ U\subseteq U$.  Now assume that $z\in U$ and observe that $z\leqslant z$ since $\leqslant$ is reflexive. Then by condition 5 of Definition \ref{demg space}, there exists $x\in\Theta$ with $Sxzz$ and hence $z\in\Theta\circ U$.

    To see that $U\circ(V\cup W)=(U\circ V)\cup(U\circ W)$ for all $U,V,W\in\Omega(A)$, first note that if $z\in U\circ (V\cup W)$, then there exists $x\in U$ and $y\in V\cup W$ such that $Sxyz$. If $y\in V$, then $z\in U\circ V$ so $z\in (U\circ V)\cup(U\circ W)$. If $y\in W$, then $z\in U\circ W$ so $z\in (U\circ V)\cup(U\circ W)$. Hence we have $U\circ(V\cup W)\subseteq(U\circ V)\cup(U\circ W)$. Now assume $z\in (U\circ V)\cup(U\circ W)$ so that $z\in U\circ V$ or $z\in U\circ W$. If $z\in U\circ V$, there exists $x\in U$ and $y\in V$, so $y\in V\cup W$, such that $Sxyz$ so $z\in U\circ (V\cup W)$. If $z\in U\circ W$, there exists $x\in U$ and $y\in W$, so $y\in V\cup W$, such that $Sxyz$ so $z\in U\circ (V\cup W)$. Therefore we have $(U\circ V)\cup(U\circ W)\subseteq U\circ (V\cup W)$. An analogous argument shows that $(U\cup V)\circ W=(U\circ W)\cup(V\circ W)$, which completes the proof.       
\end{proof}

\begin{theorem}
    Every De Morgan groupoid $A$ is isomorphic to $\mathcal{G}_0(\mathcal{F}_0(A))$. 
\end{theorem}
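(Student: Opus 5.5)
The plan is to show that the same map $\phi(a)=\{x\in\mathfrak{P}(A):a\in x\}$ used in Theorem \ref{rep theorem} is an isomorphism from $A$ onto $\mathcal{G}_0(\mathcal{F}_0(A))$. The De Morgan algebra reduct of $A$ is handled by earlier results. By Lemma \ref{lemma 3.11} and Theorem \ref{rep theorem}, $\phi$ is a well-defined bijection from $A$ onto $\Omega(\mathcal{F}_0(A))$. It preserves $\wedge,\vee,0,1$ and satisfies $\phi(-a)=\phi(a)^*$; here one uses that the topology and $g_A$ of $\mathcal{F}_0(A)$ coincide with those of the prime spectrum in Definition \ref{prime spectrum}. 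What remains is to check the groupoid signature, namely $\phi(a\cdot b)=\phi(a)\circ\phi(b)$, $\phi(a\rightarrow b)=\phi(a)\rightarrow\phi(b)$ and $\phi(t)=\Theta_A$.

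The constant is immediate, since $\Theta_A=\{x:t\in x\}=\phi(t)$ by definition. For fusion I would use the argument from the proof of Lemma \ref{lemma 4.7}.
\begin{itemize}
\item If $a\cdot b\in z$, then ${\uparrow}a\odot{\uparrow}b\subseteq z$, because $a'\geq a$, $b'\geq b$ and monotonicity of $\cdot$ give $a'\cdot b'\geq a\cdot b$. Monotonicity of $\cdot$ follows from condition (4)/(5) of Definition \ref{dem groupoid}. Lemma \ref{lemma 4.3}(2) then yields prime $x\ni a$ and $y\ni b$ with $S_Axyz$, after applying the lemma once in each coordinate if needed.
\item Conversely, if $S_Axyz$ with $a\in x$ and $b\in y$, then $a\cdot b\in x\odot y\subseteq z$.
\end{itemize}

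For implication, the inclusion $\phi(a\rightarrow b)\subseteq\phi(a)\rightarrow\phi(b)$ is direct. Suppose $a\rightarrow b\in x$, $S_Axyz$ and $a\in y$. Then $(a\rightarrow b)\cdot a\in z$, and residuation gives $(a\rightarrow b)\cdot a\leq b$, so $b\in z$.

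For the reverse inclusion, suppose $a\rightarrow b\notin x$. Then $b\notin x\odot{\uparrow}a$: otherwise $c\cdot a'\leq b$ with $c\in x$ and $a'\geq a$ would give $c\cdot a\leq b$, hence $c\leq a\rightarrow b$, forcing $a\rightarrow b\in x$. The set $x\odot{\uparrow}a$ is a filter by Lemma \ref{lemma 4.3}(1). The Prime Filter Theorem, applied with the ideal ${\downarrow}b$, gives a prime $z\supseteq x\odot{\uparrow}a$ with $b\notin z$. Lemma \ref{lemma 4.3}(2) then gives a prime $y\supseteq{\uparrow}a$ with $x\odot y\subseteq z$. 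Here one must make sure the prime extension acts on the second coordinate while keeping $x$ fixed, which is the form stated in \cite{urq}. So $S_Axyz$ with $a\in y$ and $b\notin z$, hence $x\notin\phi(a)\rightarrow\phi(b)$.

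The main obstacle is this last step, the prime-filter extension that keeps $x$ fixed and enlarges ${\uparrow}a$ to a prime filter. I would verify it by a Zorn's lemma argument. Take a filter $y\supseteq{\uparrow}a$ that is maximal with respect to $x\odot y\subseteq z$. Then show $y$ is prime, using distributivity of $\cdot$ over $\vee$ (Definition \ref{dem groupoid}(4)) and primeness of $z$. Having shown all the operations are preserved, I would conclude, exactly as in Theorem \ref{rep theorem}, that $\phi$ is an isomorphism of De Morgan groupoids.
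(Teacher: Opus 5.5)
Your proposal is correct and follows the paper's proof. The paper likewise takes the map $\phi$ from Theorem~\ref{rep theorem} for the De Morgan reduct, and it cites $\phi(a\cdot b)=\phi(a)\circ\phi(b)$, $\phi(a\rightarrow b)=\phi(a)\rightarrow\phi(b)$ and $\phi(t)=\Theta_A$ from the proof of Lemma~\ref{lemma 4.7}, which are exactly the arguments you reproduce. Your Zorn's lemma argument for extending the second coordinate to a prime filter while keeping $x$ fixed is a worthwhile addition, because Lemma~\ref{lemma 4.3}(2) as stated in the paper only gives the first-coordinate extension $x'\odot y\subseteq z$.
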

\begin{proof}
    We have seen in the proof of Lemma \ref{lemma 4.7} that $\phi(a)=\{x\in\mathfrak{P}(A):a\in x\}$ is a homomorphism in the sense that
    $\phi(a\cdot b)=\phi(a)\circ\phi(b)$, $\phi(a\rightarrow b)=\phi(a)\rightarrow\phi(b)$, and $\phi(t)=\Theta$. By Applying Theorem \ref{rep theorem}, we obtain the desired isomorphism. 
\end{proof}
\subsection{Algebraic Realization of De Morgan Groupoid Spectral Spaces} In this subsection, we show that every DMGrp-space $X$ is homeomorphic and relationally isomorphic to the spectrum of prime filters $\mathcal{F}_0(\mathcal{G}_0(X))$ of the De Morgan groupoid $\mathcal{G}_0(X)$ of compact open subsets of $X$. 
\begin{theorem}
    Every DMGrp-space $X$ is homeomorphic to $\mathcal{F}_0(\mathcal{G}_0(X))$.  
\end{theorem}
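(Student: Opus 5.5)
The plan is to reuse the map from Theorem \ref{homeo} verbatim: define
\[\psi\colon X\to\mathcal{F}_0(\mathcal{G}_0(X)),\qquad \psi(x)=\{U\in\Omega(X):x\in U\}.\]
The point is that a DMGrp-space $X$ has, by Definition \ref{demg space}(1), an underlying De Morgan spectral space $\langle X;g,\tau\rangle$. Moreover, the underlying set and topology of $\mathcal{F}_0(\mathcal{G}_0(X))$ are exactly those of the prime spectrum of the De Morgan algebra reduct $\langle\Omega(X);\cap,\cup,^*,\emptyset,X\rangle$ of $\mathcal{G}_0(X)$. Both consist of the prime filters of the bounded distributive lattice $\Omega(X)$, with the topology generated by the sets $\phi(U)=\{x\in\mathfrak{P}(\Omega(X)):U\in x\}$. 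The extra data $\circ$, $\rightarrow$, $\Theta$ of $\mathcal{G}_0(X)$ do not affect which subsets of $\Omega(X)$ are prime filters, nor the basis $\beta$. So the homeomorphism statement only concerns the spectral-space part of the structure.

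The steps, in order, are as follows.

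First, observe that the proof of Theorem \ref{homeo} uses only properties of the underlying spectral space. It uses that $\Omega(X)$ is closed under finite intersections, so $\psi(x)$ is a prime filter. It uses Lemma \ref{separation property}, which is stated for S4 De Morgan spectral spaces but whose proof only uses coherence and $T_0$, so it holds verbatim for any spectral space; this gives injectivity. It uses sobriety for surjectivity. Finally, it uses the identities $\psi^{-1}[\phi(U)]=U$ and $\psi[U]=\phi(U)$ for continuity and openness.

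Second, check that none of these uses the modal relation $R$ or Definition \ref{modal de morgan spectral space}(3)--(4). This is why the argument transfers unchanged to a DMGrp-space via its De Morgan spectral space reduct.

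Third, conclude that $\psi$ is a continuous bijection with continuous inverse onto $\mathfrak{P}(\Omega(X))$ equipped with $\tau(\beta)$, that is, onto $\mathcal{F}_0(\mathcal{G}_0(X))$.

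The only point requiring care is surjectivity. Sobriety only asserts that \emph{completely} prime filters of the frame of opens are of the form $\Omega_X(x)$, while the points of $\mathcal{F}_0(\mathcal{G}_0(X))$ are merely prime filters of $\Omega(X)$. I would handle this step explicitly, since the paper's earlier proof glosses over it. Given a prime filter $p$ of $\Omega(X)$, extend it to the filter $\widehat p$ of all open sets containing some member of $p$. By compactness of each $U\in\Omega(X)$ and the basis property, $\widehat p$ is completely prime: if $\bigcup_i O_i\supseteq U\in p$, then finitely many basic compact opens cover $U$, and primeness of $p$ puts one of them in $p$. Sobriety (in its usual frame-of-opens form) then gives a point $x$ with $\widehat p=\{O: x\in O\}$, hence $p=\psi(x)$. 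With this settled, the rest is routine and the theorem follows. Relational isomorphism for $S$ and preservation of $g$ and $\Theta$ would presumably be treated separately, by analogues of Theorems \ref{relaitonal iso} and \ref{theorem 3.16} using Definition \ref{demg space}(4)--(5).
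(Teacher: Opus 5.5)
Your proposal is correct and is essentially the paper's proof, which says only that the argument of Theorem \ref{homeo} carries over to show that $\psi(x)=\{U\in\Omega(X):x\in U\}$ is a continuous bijection with continuous inverse. Your explicit compactness argument for surjectivity is a real improvement. The paper's proof of Theorem \ref{homeo} passes from ``every completely prime filter'' to ``and hence every prime filter'' of $\Omega(X)$ being of the form $\Omega_X(x)$, which tacitly needs the converse fact that prime filters of $\Omega(X)$ are completely prime, and your construction of $\widehat p$ supplies exactly that.
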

\begin{proof}
    An analogous argument to the one given in the proof of Theorem \ref{homeo} shows that $\psi(x)=\{U\in\Omega(X):x\in U\}$ is a continuous bijection with continuous inverse and hence $\psi$ is a homeomorphism from $X$ to $\mathcal{F}_0(\mathcal{G}_0(X))$.
\end{proof}
The following provides a topological of analogue of the operation defined on filters in Definition \ref{definition 4.3} in the setting of DMGrp-spaces. 
\begin{definition}
If $X$ is a DMGrp-space and $\mathscr{X},\mathscr{Y}\subseteq\mathcal{G}_0(X)$ are filters, let: 
\[\mathscr{X}\odot\mathscr{Y}=\{W\in\Omega(X):U\circ V\subseteq W\hspace{.2cm}\text{for some $U\in\mathscr{X}$ and $V\in\mathscr{Y}$}\}\]
\end{definition}
\begin{corollary}
    Let $X$ be a DMGrp-space. Then: 
\begin{enumerate}
    \item   if $\mathscr{X},\mathscr{Y}\subseteq\mathcal{G}_0(X)$ are filters, then $\mathscr{X}\odot\mathscr{Y}\subseteq\mathcal{G}_0(X)$ is a filter;  
    \item if $\mathscr{X},\mathscr{Y},\mathscr{Z}\subseteq\mathcal{G}_0(X)$ such that $\mathscr{Z}$ is prime and $\mathscr{X}\odot\mathscr{Y}\subseteq\mathscr{Z}$, there there exist prime filters $\mathscr{X}',\mathscr{Y}'\subseteq\mathcal{G}_0(X)$ such that $\mathscr{X}\subseteq\mathscr{X}'$, $\mathscr{Y}\subseteq\mathscr{Y}'$, $\mathscr{X}'\odot\mathscr{Y}\subseteq\mathscr{Z}$, and $\mathscr{X}\odot\mathscr{Y}'\subseteq\mathscr{Z}$.   
\end{enumerate}
 \end{corollary}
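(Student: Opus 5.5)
The corollary should follow by transferring Lemma \ref{lemma 4.3} along the algebra $\mathcal{G}_0(X)$. By Lemma \ref{lemma 4.8}, $\mathcal{G}_0(X)=\langle\Omega(X);\cap,\cup,^*,\circ,\rightarrow,\Theta,\emptyset,X\rangle$ is a De Morgan groupoid. Its product is $\circ$ and its order is $\subseteq$. Read in $\mathcal{G}_0(X)$, Definition \ref{definition 4.3} gives
\[\mathscr{X}\odot\mathscr{Y}=\{W\in\Omega(X):U\circ V\subseteq W\ \text{for some $U\in\mathscr{X}$ and $V\in\mathscr{Y}$}\},\]
which is exactly the operation in the preceding definition. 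So the corollary is Lemma \ref{lemma 4.3} instantiated at $A=\mathcal{G}_0(X)$, with one extra clause to supply.

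For part (1), I apply Lemma \ref{lemma 4.3}(1) directly. As a sanity check, a direct argument also works. Upward closure is immediate from the definition. For meets, given $U\circ V\subseteq W$ and $U'\circ V'\subseteq W'$, take $U\cap U'\in\mathscr{X}$ and $V\cap V'\in\mathscr{Y}$. Monotonicity of $\circ$ in each argument, which follows from join-distributivity proved in Lemma \ref{lemma 4.8}, gives $(U\cap U')\circ(V\cap V')\subseteq W\cap W'$. Non-emptiness holds because $X\circ X\in\Omega(X)$ lies in $\mathscr{X}\odot\mathscr{Y}$.

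For part (2), Lemma \ref{lemma 4.3}(2) applied in $\mathcal{G}_0(X)$ yields prime filters $\mathscr{X}'\supseteq\mathscr{X}$ and $\mathscr{Y}'\supseteq\mathscr{Y}$ with $\mathscr{X}'\odot\mathscr{Y}\subseteq\mathscr{Z}$. The statement additionally asks for $\mathscr{X}\odot\mathscr{Y}'\subseteq\mathscr{Z}$, and this is where the work lies. I would not rely on the lemma as quoted, but reprove it via the standard Urquhart argument, done separately in each coordinate.

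For the first coordinate, consider the ideal generated by $\{U\in\Omega(X):\exists V\in\mathscr{Y},\,W\notin\mathscr{Z},\ U\circ V\subseteq W\}$. Primality of $\mathscr{Z}$, together with distributivity of $\circ$ over $\cup$, shows this set is closed under finite unions, so it is an ideal. It is disjoint from $\mathscr{X}$ by the hypothesis $\mathscr{X}\odot\mathscr{Y}\subseteq\mathscr{Z}$. The Prime Filter Theorem then gives a prime $\mathscr{X}'\supseteq\mathscr{X}$ avoiding this ideal, and that is exactly $\mathscr{X}'\odot\mathscr{Y}\subseteq\mathscr{Z}$. The symmetric construction, with the roles of the arguments swapped and using distributivity in the second argument, gives $\mathscr{Y}'\supseteq\mathscr{Y}$ with $\mathscr{X}\odot\mathscr{Y}'\subseteq\mathscr{Z}$.

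The main obstacle is verifying that the relevant set is closed under unions. Suppose $U_1\circ V_1\subseteq W_1$ and $U_2\circ V_2\subseteq W_2$ with $W_i\notin\mathscr{Z}$. Set $V=V_1\cap V_2\in\mathscr{Y}$. Then $(U_1\cup U_2)\circ V\subseteq W_1\cup W_2$, and $W_1\cup W_2\notin\mathscr{Z}$ because $\mathscr{Z}$ is prime. This uses only monotonicity of $\circ$ and the lattice structure, so it goes through.
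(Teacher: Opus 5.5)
Your proposal is correct and follows the paper's route, which is simply to read Lemma~\ref{lemma 4.3} inside the De Morgan groupoid $\mathcal{G}_0(X)$; the paper cites Lemma~\ref{lemma 4.7} at this point, but Lemma~\ref{lemma 4.8}, which you use, is the one actually needed. Your coordinatewise Prime Filter Theorem argument for part (2) is worth keeping: Lemma~\ref{lemma 4.3}(2) as printed only yields $x'\odot y\subseteq z$, so the bare citation does not give $\mathscr{X}\odot\mathscr{Y}'\subseteq\mathscr{Z}$ (note, though, that your union-closure step needs $\circ$ to preserve binary unions in the relevant argument, as you say earlier, not merely to be monotone, as your final sentence suggests).
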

\begin{proof}
    The result follows Lemma \ref{lemma 4.3} and Lemma \ref{lemma 4.7}. 
\end{proof}
\begin{theorem}
    Every DMGrp-space $X$ is relationally isomorphic to $\mathcal{F}_0(\mathcal{G}_0(X))$.  
\end{theorem}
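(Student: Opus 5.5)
The map to use is the one from the preceding theorem, $\psi(x)=\{U\in\Omega(X):x\in U\}$, which we already know is a homeomorphism onto $\mathcal{F}_0(\mathcal{G}_0(X))$ (and by Theorem \ref{theorem 3.16} it commutes with $g$). What must be shown is that $\psi$ is a relational isomorphism for both structures:
\[Sxyz\iff S_{\Omega(X)}\psi(x)\psi(y)\psi(z),\qquad x\in\Theta\iff \psi(x)\in\Theta_{\Omega(X)}.\]
Here $S_{\Omega(X)}\psi(x)\psi(y)\psi(z)$ means $\psi(x)\odot\psi(y)\subseteq\psi(z)$, with $\odot$ the topological operation on filters of $\mathcal{G}_0(X)$ defined just before the preceding corollary. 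The $\Theta$ clause is immediate. The distinguished constant of $\mathcal{G}_0(X)$ is $\Theta\in\Omega(X)$, so
\[\Theta_{\Omega(X)}=\{\mathscr{Z}:\Theta\in\mathscr{Z}\},\]
and hence $\psi(x)\in\Theta_{\Omega(X)}$ iff $\Theta\in\psi(x)$ iff $x\in\Theta$.

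\textbf{Forward direction.} Assume $Sxyz$, and let $W\in\psi(x)\odot\psi(y)$. Then there are $U\ni x$ and $V\ni y$ in $\Omega(X)$ with $U\circ V\subseteq W$. By the definition of $\circ$, $Sxyz$ together with $x\in U$ and $y\in V$ gives $z\in U\circ V\subseteq W$. So $W\in\psi(z)$, and therefore $\psi(x)\odot\psi(y)\subseteq\psi(z)$.

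\textbf{Backward direction (contrapositive).} Assume $\overline{S}xyz$. Condition 4 of Definition \ref{demg space} supplies $U,V\in\Omega(X)$ with $x\in U$, $y\in V$ and $z\notin U\circ V$. By condition 2, $U\circ V\in\Omega(X)$, so $W:=U\circ V$ is a legitimate element of $\mathcal{G}_0(X)$. Trivially $U\circ V\subseteq W$, so $W\in\psi(x)\odot\psi(y)$, while $W\notin\psi(z)$. Hence $\psi(x)\odot\psi(y)\not\subseteq\psi(z)$.

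Since $\psi$ is a bijection, these equivalences show that $\psi$ and $\psi^{-1}$ both preserve $S$ and $\Theta$, so $\psi$ is the required relational isomorphism.

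\textbf{Expected main obstacle.} The logic is routine; the care is in the bookkeeping. The relation $S_{\Omega(X)}$ on $\mathcal{F}_0(\mathcal{G}_0(X))$ is defined through the algebraic operation $\cdot$ of $\mathcal{G}_0(X)$, which is $\circ$. So I must check that the algebraic $\odot$ of Definition \ref{definition 4.3}, instantiated in $\mathcal{G}_0(X)$ with order $\subseteq$, coincides with the topological $\odot$ written via $U\circ V\subseteq W$. It does, by unfolding the definitions. I would also check that condition 2 is really needed, so that $U\circ V$ may serve as a witness. The corollary preceding the statement is not required for this argument; it guarantees only that $\psi(x)\odot\psi(y)$ is a filter, so that the relation is well-behaved.
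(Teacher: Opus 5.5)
Your proof is correct and follows essentially the same route as the paper. The forward direction unfolds $\circ$ and $\odot$ through $\psi$, and the converse goes by contraposition, using condition 4 of Definition \ref{demg space} with $U\circ V$ (compact open by condition 2) as the witness. The only differences are cosmetic: you fold in the $\Theta$ clause, which the paper proves separately as Theorem \ref{algebraic realization 2}, and you conclude $W\in\psi(z)$ directly from $z\in U\circ V\subseteq W$, which is slightly tidier than the paper's phrasing.
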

\begin{proof}
    First note that for any DMGrp-space $X$, since $\mathcal{G}_0(X)$ is a De Morgan groupoid and $\psi(x)$ is a prime filter in $\mathcal{G}_0(X)$, Definition \ref{prime spectrum1}(4) gives: 
    \[S_{\Omega(X)}\psi(x)\psi(y)\psi(z)\Longleftrightarrow\psi(x)\odot\psi(y)\subseteq\psi(z).\]
    Hence assume $Sxyz$ and let $W\in\psi(x)\odot\psi(y)$ so that $U\circ V\subseteq W$ for some $U\in\psi(x)$ and $V\in\psi(y)$. Thus we have $x\in U$ and $y\in V$ with $Sxyz$ and hence $z\in U\circ V$ so $U\circ V\in\psi(z)$. Therefore we have $\psi(x)\odot\psi(y)\subseteq\psi(z)$ which implies $S_{\Omega(X)}\psi(x)\psi(y)\psi(z)$. Now assume $\overline{S}xyz$ so by condition 4 of Definition \ref{demg space} there exists $U,V\in\Omega(X)$ such that $x\in U$ and $y\in V$ but $z\not\in U\circ V$. Therefore we have $U\in\psi(x)$ and $V\in\psi(y)$ but $U\circ V\not\in\psi(z)$. Clearly $U\circ V\in\psi(x)\odot\psi(y)$ since $U\circ V\subseteq U\circ V$ with $U\in\psi(x)$ and $V\in\psi(y)$ but $U\circ V\not\in\psi(z)$ and hence $\psi(x)\odot\psi(y)\not\subseteq\psi(z)$ and therefore we conclude $\overline{S_{\Omega(X)}}\psi(x)\psi(y)\psi(z)$.  
\end{proof}
Observe that by Theorem \ref{theorem 3.16}, it follows that we have: \[\psi(g(x))=g_{\Omega(X)}(\psi(x))\] for all $x\in X$ where $X$ is any DMGrp-space. 
\begin{theorem}\label{algebraic realization 2}
    In any DMGrp-space $X$, we have $\psi[\Theta]=\Theta_{\Omega(X)}$. 
\end{theorem}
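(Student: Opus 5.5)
The plan is to prove the two inclusions $\psi[\Theta]\subseteq\Theta_{\Omega(X)}$ and $\Theta_{\Omega(X)}\subseteq\psi[\Theta]$ directly. We use the definition $\Theta_{\Omega(X)}=\{\mathscr{X}\in\mathfrak{P}(\Omega(X)):\Theta\in\mathscr{X}\}$ from Definition \ref{prime spectrum1}(3), where the constant $t$ of $\mathcal{G}_0(X)$ is the compact open set $\Theta$ by Lemma \ref{lemma 4.8}. So $\Theta_{\Omega(X)}$ is just the basic open set $\phi(\Theta)$ of $\mathcal{F}_0(\mathcal{G}_0(X))$.

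For the forward inclusion, take $x\in\Theta$. Then $\Theta\in\Omega(X)$ by Definition \ref{demg space}(5), and $x\in\Theta$, so $\Theta\in\psi(x)=\{U\in\Omega(X):x\in U\}$. Since $\psi(x)$ is a prime filter of $\Omega(X)$ (as shown in the proof of Theorem \ref{homeo}, which carries over verbatim), we get $\psi(x)\in\Theta_{\Omega(X)}$.

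For the reverse inclusion, take a prime filter $\mathscr{X}$ of $\Omega(X)$ with $\Theta\in\mathscr{X}$. By surjectivity of $\psi$, which uses sobriety of $X$ exactly as in Theorem \ref{homeo} and the homeomorphism $X\cong\mathcal{F}_0(\mathcal{G}_0(X))$ established just above, there is $x\in X$ with $\mathscr{X}=\psi(x)$. Then $\Theta\in\psi(x)$ means $x\in\Theta$, so $\mathscr{X}\in\psi[\Theta]$.

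Alternatively, the whole statement is the instance $U=\Theta$ of the computation $\psi[U]=\phi(U)$ already carried out in the proof of Theorem \ref{homeo}, since $\Theta\in\Omega(X)$. I would cite that as a one-line proof and present the two inclusions for transparency. The only step needing care is surjectivity of $\psi$, which relies on every prime filter of $\Omega(X)$ being of the form $\Omega_X(x)$. The paper takes this from sobriety, as in Theorem \ref{homeo}, and I would simply invoke that result rather than reprove it. Nothing specific to the ternary relation $S$ is needed here; only the fact that $\Theta$ is compact open enters.
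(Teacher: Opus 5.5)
Your proposal is correct and follows the paper's proof, which is the one-line chain $\psi[\Theta]=\{\psi(x):x\in\Theta\}=\{\psi(x):\Theta\in\psi(x)\}=\Theta_{\Omega(X)}$; your two inclusions simply unpack it. The last equality in that chain needs every prime filter of $\Omega(X)$ to be of the form $\psi(x)$. At that point the paper cites the statement itself, an evident slip, whereas you supply the actual justification: surjectivity of $\psi$, from the homeomorphism $X\cong\mathcal{F}_0(\mathcal{G}_0(X))$.
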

\begin{proof}
    Observe that by condition 3 of Definition \ref{prime spectrum1} together with Theorem \ref{algebraic realization 2} we have  $\Theta_{\Omega(X)}=\{\psi(x):\Theta\in\psi(x)\}$. Hence we have: 
    \[\psi[\Theta]=\{\psi(x):x\in\Theta\}=\{\psi(x):\Theta\in\psi(x)\}=\Theta_{\Omega(X)}\] which follows directly by the definition of $\psi$ and $\Theta_{\Omega(X)}$. 
\end{proof}
\subsection{The Contravariant Functors $\mathcal{F}_F$ and $\mathcal{G}_F$} In this subsection, we introduce the category $\mathbf{DMGrpSpec}$ and show that the category $\mathbf{DMGrp}$ is dually equivalent to $\mathbf{DMGrpSpec}$. 
\begin{definition}
    If $A$ and $A'$ are De Morgan groupoids, a function $h\colon A\to A'$ is a \emph{homomorphism} provided the following conditions are satisfied:  
    \begin{enumerate}
        \item $h(a\wedge b)=h(a)\wedge h(b)$; 
        \item $h(a\vee b)=h(a)\vee h(b)$; 
        \item $h(a\cdot b)=h(a)\cdot h(b)$; 
        \item $h(a\rightarrow b)=h(a)\rightarrow h(b)$; 
        \item $h(t)=t'$; 
        \item $h(0)=0'$; 
        \item $h(1)=1'$. 
    \end{enumerate}
\end{definition}
\begin{definition}
    By $\mathbf{DMGrp}$ we denote the category of De Morgan groupoids and homomorphisms. 
\end{definition}
The following class of functions provide spectral analogous of the morphisms introduced in \cite{urq}. 
\begin{definition}\label{spectral frame morphisms 2}
    Let $X$ and $X'$ be DMGrp-spaces. A function $f\colon X\to X'$ is a \emph{spectral frame morphism} provided the following conditions are satisfied: 
    \begin{enumerate}
        \item $f$ is a spectral map;
        \item $f^{-1}[\Theta']=\Theta$ and $f(g(x))=g(f(x))$;  
        \item $Sxyz\Longrightarrow S'f(x)f(y)f(z)$; 
        \item $S'x'y'f(z)\Longrightarrow\exists x,y\in X(Sxyz\hspace{.1cm}\&\hspace{.1cm}x'\leqslant f(x)\hspace{.1cm}\&\hspace{.1cm}y'\leqslant f(y))$; 
        \item $S'f(x)y'z'\Longrightarrow\exists y,z\in X(Sxyz\hspace{.1cm}\&\hspace{.1cm}y'\leqslant f(y)\hspace{.1cm}\&\hspace{.1cm}f(z)\leqslant z')$. 
    \end{enumerate}
\end{definition}
\begin{proposition}
    Spectral frame morphisms on DMGrp-spaces are closed under composition. 
\end{proposition}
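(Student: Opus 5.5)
Take spectral frame morphisms $f_1\colon X_1\to X_2$ and $f_2\colon X_2\to X_3$ between DMGrp-spaces and check each condition of Definition \ref{spectral frame morphisms 2} for $f=f_2\circ f_1$ in turn, following the pattern of Proposition \ref{composition}. Condition 1 is exactly as there: $(f_2\circ f_1)^{-1}[U]=f_1^{-1}[f_2^{-1}[U]]\in\Omega(X_1)$ for $U\in\Omega(X_3)$. For condition 2, write $(f_2\circ f_1)^{-1}[\Theta_3]=f_1^{-1}[f_2^{-1}[\Theta_3]]=f_1^{-1}[\Theta_2]=\Theta_1$. The identity $f_2f_1(g(x))=g(f_2f_1(x))$ is the same one-line computation as in Proposition \ref{composition}. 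Condition 3 is immediate: $S_1xyz$ gives $S_2f_1(x)f_1(y)f_1(z)$, and hence $S_3f_2f_1(x)f_2f_1(y)f_2f_1(z)$.

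The back conditions 4 and 5 are the only real work. First record that spectral maps are monotone for the specialization orders: if $u\leqslant v$ and $f(u)\in U$ with $U$ open, then $u\in f^{-1}[U]$, which is open, so $v\in f^{-1}[U]$. Alternatively, use Lemma \ref{separation property} together with the fact that $f^{-1}$ preserves compact opens.

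For condition 4, suppose $S_3x'y'f_2(f_1(z))$. Apply condition 4 for $f_2$ with $f_1(z)\in X_2$. This gives $u,v\in X_2$ with $S_2uvf_1(z)$, $x'\leqslant f_2(u)$ and $y'\leqslant f_2(v)$. Now apply condition 4 for $f_1$ to $S_2uvf_1(z)$. This gives $x,y\in X_1$ with $S_1xyz$, $u\leqslant f_1(x)$ and $v\leqslant f_1(y)$. By monotonicity of $f_2$ we get $f_2(u)\leqslant f_2f_1(x)$, and transitivity then yields $x'\leqslant f_2f_1(x)$. The same argument gives $y'\leqslant f_2f_1(y)$.

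For condition 5, suppose $S_3f_2(f_1(x))y'z'$. Condition 5 for $f_2$ gives $v,w\in X_2$ with $S_2f_1(x)vw$, $y'\leqslant f_2(v)$ and $f_2(w)\leqslant z'$. Condition 5 for $f_1$ then gives $y,z\in X_1$ with $S_1xyz$, $v\leqslant f_1(y)$ and $f_1(z)\leqslant w$. Monotonicity of $f_2$ now yields $y'\leqslant f_2f_1(y)$ and $f_2f_1(z)\leqslant f_2(w)\leqslant z'$.

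The main obstacle is exactly this monotonicity step, because the inequalities produced at the two stages have to be chained through $f_2$. Once continuity-implies-monotonicity is stated explicitly, the rest of the argument is routine bookkeeping. If the intended reading of condition 4 instead used Definition \ref{demg space}(3), that is, the monotonicity of $S$ itself, the same chaining would go through.
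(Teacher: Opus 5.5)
Your proof is correct and follows the same route as the paper: you verify each clause of the definition for $f_2\circ f_1$, obtaining the back conditions by applying the corresponding condition first for $f_2$ and then for $f_1$. Your explicit observation that spectral maps are monotone for the specialization order is exactly what chains $x'\leqslant f_2(u)$ and $u\leqslant f_1(x)$ into $x'\leqslant f_2(f_1(x))$, a step the paper's proof uses without comment (your closing remark about using the monotonicity of $S$ instead is unnecessary).
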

\begin{proof}
     Let $\langle X_1;g_1,S_1,\Theta_1,\tau_1\rangle$, $\langle X_2;g_2,S_2,\Theta_2,\tau_2\rangle$, and $\langle X_3;g_3,S_3,\Theta_3,\tau_3\rangle$  be DMGrp-spaces and $f_1\colon X_1\to X_2$ and $f_2\colon X_2\to X_3$ spectral frame morphisms. Clearly: 
     \[S_1xyz\Longrightarrow S_2f_1(x)f_1(y)f_1(z)\Longrightarrow S_3f_2(f_1(x))f_2(f_1(y))f_2(f_1(z))\] since $f_1$ and $f_1$ are spectral frame morphisms and thus satisfy condition 2 of Definition \ref{spectral frame morphisms 2} and hence $S_1xyz\Longrightarrow S_3f_2\circ f_1(x)f_2\circ f_1(y)f_2\circ f_1(z)$. Moreover: 
     \[(f_2\circ f_1)^{-1}[\Theta_3]=f^{-1}_1\circ f^{-1}_2[\Theta_3]=f^{-1}_1[f^{-1}_2[\Theta_3]]=f^{-1}_1[\Theta_2]=\Theta_1\] so condition 2 is satisfied. For condition 3, assume $S_3x_3y_3f_2\circ f_1(z_1)$ so that we have $S_3x_3y_3f_2(f_1(z_1))$. Then there exists $x_2,y_2\in X_2$ such that $S_2x_2y_2f_1(z_1)$ with $x_3\leqslant f_2(f_1(x_1))$ and $y_3\leqslant f_2(f_1(y_1))$. Since $S_2x_2y_2f_1(z_1)$, there exists $x_1,y_1\in X_1$ such that $S_1x_1y_1z_1$ with $x_2\leqslant f_1(x_1)$ and $y_2\leqslant f_1(y_1)$. Hence we have shown that there exists $x_1,x_2\in X_1$ such that $S_1x_1y_1z_1$ with $x_3\leqslant f_2(f_1(x_1))$ and $y_3\leqslant f_2(f_1(y_1))$ whenever  $S_3x_3y_3f_2\circ f_1(z_1)$, as desired. Condition 4 can be demonstrated analogously and hence by Proposition \ref{composition} we conclude that $f_2\circ f_1\colon X_1\to X_3$ is a spectral frame morphism whenever $f_1\colon X_1\to X_2$ and $f_2\colon X_2\to X_3$ are spectral frame morphisms.    
\end{proof}
\begin{definition}
    By $\mathbf{DMGrpSpec}$ we denote the category of DMGrp-spaces and their associated spectral frame morphisms.  
\end{definition}
\begin{lemma}\label{lemma 4.20}
       Let $A$ and $A'$ be De Morgan groupoids and $h\colon A\to A'$ a homomorphism. Then the map $\mathcal{F}_1\colon\mathcal{F}_0(A')\to\mathcal{F}_0(A)$ defined by $\mathcal{F}_1(h):=h^{-1}$ is a spectral frame morphism from the DMGrp-space $\mathcal{F}_0(A')$ to the DMGrp-space $\mathcal{F}_0(A)$.  
\end{lemma}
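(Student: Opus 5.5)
Write $f=h^{-1}\colon\mathfrak{P}(A')\to\mathfrak{P}(A)$. I will check the five conditions of Definition \ref{spectral frame morphisms 2} in turn, reusing the proof of Lemma \ref{homo to spec} wherever possible. Condition 1 is exactly the calculation there: $f^{-1}[\phi(a)]=\phi(h(a))$. For condition 2, the identity $f(g_{A'}(x))=g_A(f(x))$ is the computation already done in Lemma \ref{homo to spec}, using $h(-a)=-h(a)$. I will assume this is part of the notion of homomorphism, as it must be for the claim to hold. For $\Theta$ I compute $f^{-1}[\Theta_A]=\{x:t\in h^{-1}[x]\}=\{x:h(t)\in x\}=\phi(t')=\Theta_{A'}$, using $h(t)=t'$. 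Condition 3 is immediate. If $x\odot y\subseteq z$ and $a\in h^{-1}[x]$, $b\in h^{-1}[y]$ with $a\cdot b\le c$, then $h(a)\cdot h(b)=h(a\cdot b)\le h(c)$. Hence $h(c)\in z$, that is, $c\in h^{-1}[z]$.

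The substance lies in the back conditions 4 and 5, which I will prove by the standard Urquhart-style prime filter extension argument. For condition 4, suppose $x_0\odot y_0\subseteq h^{-1}[z]$ with $x_0,y_0\in\mathfrak{P}(A)$. Let $F$ be the filter of $A'$ generated by $h[x_0]$, and $G$ the filter generated by $h[y_0]$. I first show $F\odot G\subseteq z$. Elements of $F$ lie above some $h(a)$ with $a\in x_0$, because $h[x_0]$ is closed under meets, and similarly for $G$. So if $c'\ge h(a)\cdot h(b)=h(a\cdot b)$, then $a\cdot b\in x_0\odot y_0\subseteq h^{-1}[z]$ gives $h(a\cdot b)\in z$, hence $c'\in z$. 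Lemma \ref{lemma 4.3}(2) then yields prime filters $x\supseteq F$ and $y\supseteq G$. I will need the stronger two-sided version, in which both can be extended simultaneously while keeping $x\odot y\subseteq z$. To get it I apply the lemma twice: first extend the left argument with $G$ fixed, then extend the right argument with the new prime $x$ fixed (the symmetric version is recorded in the Corollary above). This gives $S_{A'}xyz$ with $x_0\subseteq h^{-1}[x]$ and $y_0\subseteq h^{-1}[y]$, which is exactly condition 4, since the specialization order is inclusion by Lemma \ref{lemma 3.8}.

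For condition 5, suppose $h^{-1}[x]\odot y_0\subseteq z_0$ with $x\in\mathfrak{P}(A')$. Take $G$ to be the filter generated by $h[y_0]$, and $I$ the ideal generated by $h[A\setminus z_0]$; this is an ideal because $A\setminus z_0$ is a prime ideal closed under joins. I claim $(x\odot G)\cap I=\emptyset$. Otherwise there are $a'\in x$ and $b\in y_0$ with $a'\cdot h(b)\le h(c)$ and $c\notin z_0$. By residuation this gives $a'\le h(b)\rightarrow h(c)=h(b\rightarrow c)$, so $b\rightarrow c\in h^{-1}[x]$. Then $(b\rightarrow c)\cdot b\le c$ puts $c$ in $h^{-1}[x]\odot y_0\subseteq z_0$, a contradiction. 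The Prime Filter Theorem now gives a prime $z\supseteq x\odot G$ disjoint from $I$, so $h^{-1}[z]\subseteq z_0$. Then Lemma \ref{lemma 4.3}(2), applied on the right, extends $G$ to a prime $y$ with $x\odot y\subseteq z$. This gives $S_{A'}xyz$, $y_0\subseteq h^{-1}[y]$ and $h^{-1}[z]\subseteq z_0$, as condition 5 requires.

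I expect the main obstacle to be these back conditions. The two places needing care are the simultaneous two-sided extension in condition 4, where Lemma \ref{lemma 4.3}(2) as stated only extends one side, and the use of $\rightarrow$ and residuation in condition 5 to separate $x\odot G$ from the image ideal.
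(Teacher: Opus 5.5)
Your proof is correct and follows the same route as the paper. Conditions 1--2 come from the computations of Lemma \ref{homo to spec}, with $h(t)=t'$ handling $\Theta$; condition 3 is checked directly; and condition 4 extends the filters generated by $h[x_0]$ and $h[y_0]$ via Lemma \ref{lemma 4.3}(2). You are more careful than the paper in three places: it tacitly needs $h(-a)=-h(a)$, which its definition of homomorphism omits; it uses a simultaneous two-sided prime extension that Lemma \ref{lemma 4.3}(2) as stated does not literally give, which your two-step application repairs; and it omits the residuation argument for condition 5, which you supply.
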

\begin{proof}
    By Lemma \ref{homo to spec}, it suffices to demonstrate that conditions 3-5 of Definition \ref{spectral frame morphisms 2} are satisfied. For condition 2, we have:
    \begin{align*}
        \mathcal{F}_1(f)^{-1}[\Theta_{A'}]&=(h^{-1})^{-1}[\Theta_{A'}]
        \\&=\{x\in\mathfrak{P}(A):h^{-1}[x]\in\Theta_{A'}\}
        \\&=\{x\in\mathfrak{P}(A):t\in h^{-1}[x]\}
        \\&=\{x\in\mathfrak{P}(A):h(t)\in x\}
        \\&=\{x\in\mathfrak{P}(A):t\in x\}
        \\&=\Theta_A
    \end{align*}  
    For condition 3, assume $S_{A'}xyz$ so that $x\odot y\subseteq z$ and take $c\in h^{-1}[x]\odot h^{-1}[y]$. By the latter we have $a\cdot b\leq c$ for some $a\in h^{-1}[x]$ and $b\in h^{-1}[y]$ where $h(a)\in x$ and $h(b)\in y$. Since $h$ is a homomorphism and $a\cdot b\leq c$, we have $h(a\cdot b)=h(a)\cdot h(b)\leq h(c)$ and hence $h(c)\in x\odot y$. Since by hypothesis we have $x\odot y\subseteq z$, it follows that $h(c)\in z$ so $c\in h^{-1}[z]$ and thus $S_Ah^{-1}[x]h^{-1}[y]h^{-1}[z]$ so $S_A\mathcal{F}_1(h)[x]\mathcal{F}_1(h)[y]\mathcal{F}_1(h)[z]$.  

    For condition 4, assume $S_{A'}x'y'\mathcal{F}_1(h)[z]$ so that $S_{A'}x'y'h^{-1}[z]$. Then let $[h(x'))$ and $[h(y'))$ be the filters defined by: 
    \[[h(x'))=\{b\in A':h(a)\leq b\hspace{.2cm}\text{for some $a\in x'$}\};\]
     \[[h(y'))=\{b\in A':h(a)\leq b\hspace{.2cm}\text{for some $a\in y'$}\}.\]
     One can easily verify $[h(x'))\odot[h(y'))\subseteq z$ so that  $S_A[h(x'))[h(y'))z'$. By Lemma \ref{lemma 4.3}(2), there exist prime filters $x'_p,y'_p\in\mathfrak{P}(A')$ such that $[h(x'))\subseteq x'_p$ and $[h(y'))\subseteq y'_p$ with $S_{A'}x'_py'_pz'$ where $x'\subseteq h^{-1}[x'_p]=\mathcal{F}_1(h)[x'_p]$ and $y\subseteq h^{-1}[y'_p]=\mathcal{F}_1(h)[y'_p]$. Condition 5 follows via a somewhat analogous argument and exploits the fact that $a\cdot b\leq c$ iff $a\leq b\rightarrow c$ and hence we omit the details.    
\end{proof}
\begin{lemma}\label{lemma 4.21}
  Let $X$ and $X'$ be DMGrp-spaces and $f\colon X\to X'$ a spectral frame morphism. Then the map $\mathcal{G}_1\colon\mathcal{G}_0(X')\to\mathcal{G}_0(X)$ defined by $\mathcal{G}_1(f)=f^{-1}$ is a homomorphism from the De Morgan groupoid $\mathcal{G}_0(X')$ to the De Morgan groupoid $\mathcal{G}_0(X)$.     
\end{lemma}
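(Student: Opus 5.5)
The plan is to reduce to the De Morgan part, which is already proved, and then check the three groupoid operations directly from the conditions of Definition \ref{spectral frame morphisms 2}. By Lemma \ref{spec to homo}, restricted to De Morgan spectral spaces, $f^{-1}$ maps $\Omega(X')$ into $\Omega(X)$, since $f$ is spectral. It is also a bounded lattice homomorphism and preserves $^*$, using $f(g(x))=g(f(x))$. Preservation of the constant is immediate from condition 2: $\mathcal{G}_1(f)[\Theta']=f^{-1}[\Theta']=\Theta$. What remains is to show, for $U,V\in\Omega(X')$,
\[f^{-1}[U\circ V]=f^{-1}[U]\circ f^{-1}[V],\qquad f^{-1}[U\rightarrow V]=f^{-1}[U]\rightarrow f^{-1}[V].\]

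For $\circ$, the inclusion $\supseteq$ uses condition 3. If $Sxyz$ with $f(x)\in U$ and $f(y)\in V$, then $S'f(x)f(y)f(z)$, so $f(z)\in U\circ V$.

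For $\subseteq$ I use condition 4. Suppose $f(z)\in U\circ V$, so $S'x'y'f(z)$ with $x'\in U$ and $y'\in V$. Condition 4 gives $x,y$ with $Sxyz$, $x'\leqslant f(x)$ and $y'\leqslant f(y)$. Since $U$ and $V$ are open, they are upsets in the specialization order, so $f(x)\in U$ and $f(y)\in V$. Hence $z\in f^{-1}[U]\circ f^{-1}[V]$.

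For $\rightarrow$ I read the definition as $x\in U\rightarrow V$ iff for all $y,z$, $Sxyz$ and $y\in U$ imply $z\in V$. For $\subseteq$, take $f(x)\in U\rightarrow V$ and suppose $Sxyz$ with $f(y)\in U$. Condition 3 gives $S'f(x)f(y)f(z)$, so $f(z)\in V$.

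For $\supseteq$, take $x\in f^{-1}[U]\rightarrow f^{-1}[V]$ and suppose $S'f(x)y'z'$ with $y'\in U$. Condition 5 gives $y,z$ with $Sxyz$, $y'\leqslant f(y)$ and $f(z)\leqslant z'$. Since $U$ is an upset, $f(y)\in U$, so $y\in f^{-1}[U]$ and therefore $f(z)\in V$. Since $V$ is an upset, $z'\in V$. Hence $f(x)\in U\rightarrow V$.

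The main obstacle is this last inclusion. It is the only place where the order inequalities $y'\leqslant f(y)$ and $f(z)\leqslant z'$ of condition 5 must point in exactly the right directions. It also relies on the convention that compact opens are upsets of $\leqslant$, which holds by the definition of the specialization order. With these facts checked, $\mathcal{G}_1(f)$ preserves all operations of Definition \ref{dem groupoid}, which proves the lemma.
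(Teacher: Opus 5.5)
Your proposal is correct and follows the paper's proof essentially step for step. You reduce the lattice and $^*$ part to Lemma \ref{spec to homo}, get $\Theta$ from condition 2, and use conditions 3 and 4 for the two inclusions for $\circ$ and conditions 3 and 5 for $\rightarrow$, with compact opens being upsets of $\leqslant$ handling the order-shifting steps. Your condition numbers are in fact the correct ones: the paper's text cites ``condition 3'' for the $\subseteq$ inclusion of $\circ$ and ``condition 1'' for $\Theta$, where conditions 4 and 2 of Definition \ref{spectral frame morphisms 2} are what is actually used.
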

\begin{proof}
  By Lemma \ref{spec to homo}, it suffices to show    $\mathcal{G}_1(f)[U\circ V]=\mathcal{G}_1(f)[U]\circ\mathcal{G}_1(f)[V]$, $\mathcal{G}_1(f)[U\rightarrow V]=\mathcal{G}_1(f)[U]\rightarrow\mathcal{G}_1(f)[V]$, and $\mathcal{G}_1(f)[\Theta']=\Theta$. For the first equation, assume $z\in\mathcal{G}_1(f)[U\circ V]$ so that $z\in f^{-1}[U\circ V]$ and hence $f(z)\in U\circ V$. Then there exists $x',y'\in X'$ such that $S'x'y'f(z)$ with $x'\in U$ and $y'\in V$. Since $S'x'y'f(z)$, condition 3 of Definition \ref{spectral frame morphisms 2} guarantees the existence of some $x,y\in X$ such that $Sxyz$ with $x'\leqslant f(x)$ and $y'\leqslant f(y)$. Since $x'\in U$ and $x'\leqslant f(x)$ we have $f(x)\in U$ and thus $x\in f^{-1}[U]$. Likewise, since $y'\in V$ with $y'\leqslant f(y)$, we have $f(y)\in V$ so $y\in f^{-1}[V]$. Since $Sxyz$, we obtain $z\in f^{-1}[U]\circ f^{-1}[V]=\mathcal{G}_1(f)[U]\circ\mathcal{G}_1(f)[V]$ so $\mathcal{G}_1(f)[U\circ V]\subseteq\mathcal{G}_1(f)[U]\circ\mathcal{G}_1(f)[V]$. Now take $z\in\mathcal{G}_1(f)[U]\circ\mathcal{G}_1(f)[V]=f^{-1}[U]\circ f^{-1}[V]$ so that there exists $x,y\in X'$ such that $x\in f^{-1}[U]$ and $y\in f^{-1}[V]$ such that $Sxyz$. Then condition 3 of Definition \ref{spectral frame morphisms 2} yields $Sf(x)f(y)f(z)$ and since $f(x)\in U$ and $f(y)\in V$, we have $f(z)\in U\circ V$ so $z\in f^{-1}[U\circ V]=\mathcal{G}_1(f)[U\circ V]$.

For the second equation, assume $x\in\mathcal{G}_1(f)[U\rightarrow V]=f^{-1}[U\rightarrow V]$ so $f(x)\in U\rightarrow V$. Then for all $y'z'\in X'$, if $S'f(x)y'z'$ and $y'\in U$, we have $z'\in U$. Now take any $y,z\in X$ such that $Sxyz$ with $y\in f^{-1}[U]$ so $f(y)\in U$. Since $Sxyz$, condition 3 of Definition \ref{spectral frame morphisms 2} gives $S'f(x)f(y)f(z)$. Since $f(y)\in U$, our hypothesis gives $f(z)\in V$ and hence $x\in f^{-1}[U]\rightarrow f^{-1}[V]=\mathcal{G}_1(f)[U]\rightarrow\mathcal{G}_1(f)[V]$. Now assume $x\in\mathcal{G}_1(f)[U]\rightarrow\mathcal{G}_1(f)[V]=f^{-1}[U]\rightarrow f^{-1}[V]$ and take any $y',z'\in X'$ such that $S'f(x)y'z'$ with $y'\in U$. Since $S'f(x)y'z'$, it follows by condition 5 of Definition \ref{spectral frame morphisms 2} that there exists $y,z\in X$ such that $Sxyz$ with $y'\leqslant f(y)$ and $f(z)\leqslant z'$. Since $y'\in U$ and $y'\leqslant f(y)$, we have $f(y)\in U$ and thus our hypothesis gives $f(z)\in V$. Then since $f(z)\leqslant z'$, we have $z'\in V$ and hence $f(x)\in U\rightarrow V$ so $x\in f^{-1}[U\rightarrow V]=\mathcal{G}_1(f)[U\rightarrow V]$. The third equation follows immediately since $\mathcal{G}_1(f)[\Theta']=f^{-1}[\Theta']=\Theta$ by condition 1 of Definition \ref{spectral frame morphisms 2}. 
\end{proof}
Lemma \ref{lemma 4.7} and Lemma \ref{lemma 4.8} together with Lemma \ref{lemma 4.20} and Lemma \ref{lemma 4.21} imply $\mathcal{F}_F=\langle\mathcal{F}_0,\mathcal{F}_1\rangle\colon\mathbf{DMGrp}\to\mathbf{DMGrpSpec}$ and $\mathcal{G}_F=\langle\mathcal{G}_0,\mathcal{G}_1\rangle\colon\mathbf{DMGrpSpec}\to\mathbf{DMGrp}$ are contravariant functors. Indeed, we have the following.  
\begin{lemma}
    The functors $\mathcal{F}_F=\langle\mathcal{F}_0,\mathcal{F}_1\rangle\colon\mathbf{DMGrp}\to\mathbf{DMGrpSpec}$ and $\mathcal{G}_F=\langle\mathcal{G}_0,\mathcal{G}_1\rangle\colon\mathbf{DMGrpSpec}\to\mathbf{DMGrp}$ are fully faithful. 
\end{lemma}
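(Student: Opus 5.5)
The plan mirrors Lemmas \ref{fully faithful 1} and \ref{fully faithful 2}, now using the groupoid-level representation and realization results. There are four steps: faithfulness of $\mathcal{F}_F$, faithfulness of $\mathcal{G}_F$, fullness of $\mathcal{G}_F$, and fullness of $\mathcal{F}_F$.

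\emph{Faithfulness of $\mathcal{F}_F$.} Take distinct homomorphisms $h_1\not=h_2\colon A\to A'$ of De Morgan groupoids. Choose $a$ with $h_1(a)\not\leq h_2(a)$. Apply the Prime Filter Theorem to ${\uparrow}h_1(a)$ and ${\downarrow}h_2(a)$ to obtain $x_p\in\mathfrak{P}(A')$ containing $h_1(a)$ but not $h_2(a)$. Then $a\in h_1^{-1}[x_p]\setminus h_2^{-1}[x_p]$, so $\mathcal{F}_1(h_1)\not=\mathcal{F}_1(h_2)$. This argument is verbatim from Lemma \ref{fully faithful 1}, because the underlying maps of $\mathcal{F}_1$ are the same as those of $\mathcal{S}_1$.

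\emph{Faithfulness of $\mathcal{G}_F$.} Take distinct spectral frame morphisms $f_1\not=f_2\colon X\to X'$. Pick $x$ with $f_1(x)\not\leqslant f_2(x)$ and use Lemma \ref{separation property}, which applies since the proof only uses coherence of the spectral reduct. This gives $U\in\Omega(X')$ with $x\in f_1^{-1}[U]\setminus f_2^{-1}[U]$, so $\mathcal{G}_1(f_1)\not=\mathcal{G}_1(f_2)$.

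\emph{Fullness.} Here I would not just gesture at the isomorphisms; I would transport morphisms explicitly. Let $h\colon\mathcal{G}_0(X')\to\mathcal{G}_0(X)$ be a homomorphism and set
\[f:=\psi_{X'}^{-1}\circ\mathcal{F}_1(h)\circ\psi_X\colon X\to X'.\]
Three facts make $f$ a spectral frame morphism:
\begin{itemize}
\item $\psi_X$ and $\psi_{X'}$ are homeomorphisms by the homeomorphism theorem for DMGrp-spaces.
\item They are relational isomorphisms for $S$, preserve $g$ by Theorem \ref{theorem 3.16}, and carry $\Theta$ to $\Theta_{\Omega(X)}$ by Theorem \ref{algebraic realization 2}. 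Because $\psi$ is also an order-isomorphism for the specialization orders, the existential conditions 4 and 5 of Definition \ref{spectral frame morphisms 2} transfer along $\psi$.
\item $\mathcal{F}_1(h)$ is a spectral frame morphism by Lemma \ref{lemma 4.20}.
\end{itemize}
Then the computation from the proof of Theorem \ref{duality}, $(f^{-1})^{-1}[\psi(x)]=\psi(f(x))$, read in the other direction, gives $f^{-1}[U]=h(U)$ for every $U\in\Omega(X')$. Concretely, $x\in f^{-1}[U]$ iff $U\in h^{-1}[\psi_X(x)]$ iff $h(U)\in\psi_X(x)$ iff $x\in h(U)$. Hence $\mathcal{G}_1(f)=h$, and $\mathcal{G}_F$ is full.

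Symmetrically, let $f\colon\mathcal{F}_0(A')\to\mathcal{F}_0(A)$ be a spectral frame morphism. Define
\[h:=\phi_{A'}^{-1}\circ\mathcal{G}_1(f)\circ\phi_A,\]
which is a homomorphism by Lemma \ref{lemma 4.21} and the representation theorem for De Morgan groupoids. By construction $f^{-1}[\phi_A(a)]=\phi_{A'}(h(a))$. From this, $x\in h^{-1}[\cdot]$-membership is computed as $a\in h^{-1}[x]$ iff $h(a)\in x$ iff $x\in f^{-1}[\phi_A(a)]$ iff $a\in f(x)$, so $f=\mathcal{F}_1(h)$.

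\emph{Main obstacle.} I expect the delicate step to be checking that conjugating by $\psi$ preserves conditions 4 and 5 of Definition \ref{spectral frame morphisms 2}. These conditions involve both the specialization order and $S$, so the check needs $\psi$ to be simultaneously an order-isomorphism ($x\leqslant y$ iff $\psi(x)\subseteq\psi(y)$, by Lemma \ref{separation property} and the definition of $\psi$) and an $S$-isomorphism. Once both properties are recorded, the rest is a diagram chase.
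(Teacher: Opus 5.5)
Your argument is correct, and its faithfulness half is exactly the paper's. The paper's proof of this lemma only says that the arguments of Lemmas \ref{fully faithful 1} and \ref{fully faithful 2} carry over, and you reproduce their Prime Filter Theorem and Lemma \ref{separation property} steps.

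The fullness half is organized differently. The paper's template in Lemma \ref{fully faithful 1} starts from an arbitrary $f\colon X\to X'$, passes to $f^{-1}$ and back, and concludes $\mathcal{S}_1(f^{-1})=f$ after identifying each space with its double dual via $\psi$. What this actually shows is the naturality square $\psi_{X'}\circ f=\mathcal{S}_1(f^{-1})\circ\psi_X$, which is recomputed in Theorem \ref{duality}. The fullness part of Lemma \ref{fully faithful 2} is likewise just naturality of $\phi$. So the paper uses $\psi$ for the spectrum functor and $\phi$ for the algebra functor, and recovers morphisms only up to those identifications.

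You instead conjugate by the isomorphism on the domain side of the functor in question: $h=\phi_{A'}^{-1}\circ\mathcal{G}_1(f)\circ\phi_A$ for $\mathcal{F}_F$, and $f=\psi_{X'}^{-1}\circ\mathcal{F}_1(h)\circ\psi_X$ for $\mathcal{G}_F$. Membership chases then give $\mathcal{F}_1(h)=f$ and $\mathcal{G}_1(f)=h$ exactly. Turning the paper's naturality statements into literal fullness would need exactly this conjugation plus faithfulness of the opposite functor. Your direct chase dispenses with the faithfulness step and isolates what is really used:
\begin{enumerate}
\item fullness of $\mathcal{F}_F$ needs only Lemma \ref{lemma 4.21} and the representation theorem;
\item fullness of $\mathcal{G}_F$ needs $\psi$ to be an isomorphism in $\mathbf{DMGrpSpec}$, including the order- and $S$-isomorphism property that lets conditions 4 and 5 of Definition \ref{spectral frame morphisms 2} survive conjugation. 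The paper leaves this implicit.
\end{enumerate}
The paper's phrasing is shorter and recycles computations it needs for Theorem \ref{duality} anyway.

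Both arguments share one tacit assumption. Lemma \ref{lemma 4.20} (through Lemma \ref{homo to spec}) needs De Morgan groupoid homomorphisms to satisfy $h(-a)=-h(a)$, which the paper's definition of homomorphism omits. Since every $\mathcal{G}_1(f)$ preserves ${}^*$, that clause has to be read into the definition for $\mathcal{F}_F$ to be well defined and for $\mathcal{G}_F$ to be full.
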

\begin{proof}
    The proofs run similar to the proofs of Lemma \ref{fully faithful 1} and Lemma \ref{fully faithful 2}.  
\end{proof}

\begin{theorem}\label{duality 2}
    $\mathbf{DMGrp}$ is dually equivalent to $\mathbf{DMGrpSpec}$. 
\end{theorem}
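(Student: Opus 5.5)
The plan is to follow the proof of Theorem \ref{duality} and build the two natural isomorphisms out of the unit maps already constructed. The first is $\phi_A\colon A\to\mathcal{G}_0(\mathcal{F}_0(A))$, the isomorphism given by the representation theorem for De Morgan groupoids. The second is $\psi_X\colon X\to\mathcal{F}_0(\mathcal{G}_0(X))$. It is a homeomorphism by the realization theorem, preserves and reflects $S$ by the relational isomorphism theorem, commutes with $g$ by Theorem \ref{theorem 3.16}, and carries $\Theta$ onto $\Theta_{\Omega(X)}$ by Theorem \ref{algebraic realization 2}. The preceding lemmas establish that $\mathcal{F}_F$ and $\mathcal{G}_F$ are contravariant functors and are fully faithful. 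So it remains to check that $\phi$ and $\psi$ are morphisms in the appropriate categories, that their inverses are morphisms too, and that both families are natural.

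The first step is to confirm that $\phi_A$ and $\phi_A^{-1}$ are De Morgan groupoid homomorphisms. This holds because $\phi_A$ is a bijection preserving $\wedge,\vee,-,\cdot,\rightarrow,t,0,1$, and the inverse of a bijective homomorphism of algebras is again a homomorphism.

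The second step is to show that $\psi_X$ and $\psi_X^{-1}$ are spectral frame morphisms in the sense of Definition \ref{spectral frame morphisms 2}. Condition 1 holds because $\psi_X^{-1}[\phi(U)]=U$ and $\psi_X[U]=\phi(U)$. Condition 2 holds by Theorems \ref{theorem 3.16} and \ref{algebraic realization 2}. Condition 3 holds since $\psi_X$ is a relational isomorphism for $S$. For conditions 4 and 5, I would use that $\psi_X$ is bijective and reflects $S$. Given $S_{\Omega(X)}x'y'\psi_X(z)$, write $x'=\psi_X(x)$ and $y'=\psi_X(y)$; then $Sxyz$ holds, and $x'\leqslant\psi_X(x)$ holds by reflexivity. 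Condition 5 is handled the same way, and the inverse map is treated symmetrically.

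The third step is naturality, using the computations from the proof of Theorem \ref{duality}. For a homomorphism $h$ one has $(h^{-1})^{-1}[\phi(a)]=\phi(h(a))$, so $\mathcal{G}_1(\mathcal{F}_1(h))\circ\phi_A=\phi_{A'}\circ h$. For a spectral frame morphism $f$ one has $(f^{-1})^{-1}[\psi(x)]=\psi(f(x))$, so $\mathcal{F}_1(\mathcal{G}_1(f))\circ\psi_X=\psi_{X'}\circ f$. Neither identity involves the new operations, so both computations transfer verbatim.

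The main obstacle is the second step, specifically conditions 4 and 5 for $\psi_X$. Everything there hinges on $\psi_X$ being an order-isomorphism for the specialization orders, since the clauses $x'\leqslant f(x)$ and $f(z)\leqslant z'$ must be transported along $\psi_X$. By Lemma \ref{lemma 3.8} the specialization order on the spectrum is inclusion, and by Lemma \ref{separation property} together with the definition of $\psi_X$ we have $x\leqslant y$ iff $\psi_X(x)\subseteq\psi_X(y)$. I would make this explicit before verifying conditions 4 and 5.
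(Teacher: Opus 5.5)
Your proposal is correct and takes the same route as the paper. The paper's proof of Theorem~\ref{duality 2} appeals to the results of its section together with the naturality computations $(h^{-1})^{-1}[\phi(a)]=\phi(h(a))$ and $(f^{-1})^{-1}[\psi(x)]=\psi(f(x))$ from Theorem~\ref{duality}; you additionally check that $\phi_A$, $\psi_X$ and their inverses are morphisms in $\mathbf{DMGrp}$ and $\mathbf{DMGrpSpec}$, which the paper leaves implicit. The step you flag as the main obstacle is harmless: choosing the witnesses in conditions 4 and 5 as exact preimages under the bijection $\psi_X$ needs only reflexivity of $\leqslant$, and $\psi_X$ preserves specialization order anyway because it is a homeomorphism.
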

\begin{proof}
    The results collected in this section together with Theorem \ref{duality} give us the desired dual equivalence between $\mathbf{DMGrp}$ and $\mathbf{DMGrpSpec}$. 
\end{proof}

\begin{definition}
    An \emph{S4 De Morgan groupoid} is an algebra $\langle A;\wedge,\vee,-,\cdot,\rightarrow,t,0,1,\nabla\rangle$ satisfying the following conditions: 
    \begin{enumerate}
        \item $\langle A;\wedge,\vee,-,0,1,\nabla\rangle$ is an S4 De Morgan algebra; 
        \item $\langle A;\wedge,\vee,-,\cdot,\rightarrow,t,0,1\rangle$ is a De Morgan groupoid. 
    \end{enumerate}
\end{definition}
Definition \ref{modal de morgan spectral space} together with Definition \ref{demg space} suggest the following construction of the spectral duals of S4 De Morgan groupoids. 
\begin{definition}
       An \emph{S4DMGrp-space} is a relational topological space of the following shape $\langle X;g,S,R,\Theta,\tau\rangle$ such that the reduct $\langle X;g,R,\tau\rangle$ is an S4 De Morgan spectral space and the reduct $\langle X;g,S,\Theta,\tau\rangle$ is a DMGrp-space. Moreover, we call a function $f\colon X\to X'$ between S4DMGrp-spaces a \emph{spectral frame morphism} provided $f$ is a spectral frame morphism between their respective DMGrp-space reducts and satisfies $f^{-1}[\nabla_RU]=\nabla_Rf^{-1}[U]$ for any $U\in\Omega(X')$.  
\end{definition}
Letting $\mathbf{S4DMGrp}$ denote the category of S4 De Morgan groupoids and letting $\mathbf{S4DMGrpSpec}$ denote the category of S4DMGrp-spaces, Theorem \ref{duality} and Theorem \ref{duality 2} immediately yield the following.   
\begin{theorem}
    $\mathbf{S4DMGrp}$ is dually equivalent to $\mathbf{S4DMGrpSpec}$. 
\end{theorem}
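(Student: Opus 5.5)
The plan is to assemble the dual equivalence from the two constructions already in hand: Theorem \ref{duality} for the modal part and Theorem \ref{duality 2} for the groupoid part. The point to check is that neither construction disturbs the other. The two operations are interpreted by separate relations on the common carrier $\mathfrak{P}(A)$ with the topology $\tau(\beta)$: $\nabla$ by $R$ and $(\cdot,\rightarrow,t)$ by $(S,\Theta)$. So every verification done for one signature transfers verbatim to the combined one.

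\emph{Object level.} For an S4 De Morgan groupoid $A$, form the prime spectrum carrying $g_A,R_A,S_A,\Theta_A$ on $\mathfrak{P}(A)$.
\begin{enumerate}
\item Lemma \ref{lemma 3.11}, applied to the S4 De Morgan reduct, shows the $\langle g_A,R_A\rangle$-reduct is an S4 De Morgan spectral space.
\item Lemma \ref{lemma 4.7}, applied to the De Morgan groupoid reduct, shows the $\langle g_A,S_A,\Theta_A\rangle$-reduct is a DMGrp-space.
\item Hence the combined structure is an S4DMGrp-space.
\end{enumerate}
Conversely, for an S4DMGrp-space $X$, Lemmas \ref{space to algebra} and \ref{lemma 4.8} show that $\Omega(X)$ with $^*,\nabla_R,\circ,\rightarrow,\Theta$ is an S4 De Morgan groupoid, since the two sets of axioms only share the De Morgan algebra reduct. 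The map $\phi$ preserves $\nabla$ (Theorem \ref{rep theorem}) and preserves $\cdot,\rightarrow,t$ (the representation theorem for De Morgan groupoids), so $\phi$ is an isomorphism $A\cong\Omega(\mathfrak{P}(A))$. On the space side, $\psi$ is a homeomorphism (Theorem \ref{homeo}) that:
\begin{enumerate}
\item is a relational isomorphism for $R$ (Theorem \ref{relaitonal iso});
\item is a relational isomorphism for $S$ (the corresponding theorem in Section 4);
\item commutes with $g$ (Theorem \ref{theorem 3.16});
\item satisfies $\psi[\Theta]=\Theta_{\Omega(X)}$ (Theorem \ref{algebraic realization 2}).
\end{enumerate}

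\emph{Morphism level.} Take a homomorphism $h$ of S4 De Morgan groupoids. Lemma \ref{lemma 4.20} shows $h^{-1}$ is a spectral frame morphism of the DMGrp-reducts, and Lemma \ref{homo to spec} supplies the extra condition $(h^{-1})^{-1}[\nabla_R U]=\nabla_R (h^{-1})^{-1}[U]$. Together these make $h^{-1}$ a morphism of S4DMGrp-spaces. Dually, for a spectral frame morphism $f$ of S4DMGrp-spaces, Lemma \ref{lemma 4.21} shows $f^{-1}$ preserves $\circ,\rightarrow,\Theta$, and the clause $f^{-1}[\nabla_RU]=\nabla_Rf^{-1}[U]$ (as used in Lemma \ref{spec to homo}) shows it preserves $\nabla_R$. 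Composition closure for S4DMGrp-space morphisms also needs checking, and follows by combining Proposition \ref{composition} with its DMGrp analogue. This yields two contravariant functors. Naturality of $\phi$ and $\psi$ is exactly the computation $(f^{-1})^{-1}[\psi(x)]=\psi(f(x))$ and $(h^{-1})^{-1}[\phi(a)]=\phi(h(a))$ from the proof of Theorem \ref{duality}. Since these identities involve only the underlying maps, they are unaffected by the enlarged signature, and we get natural isomorphisms $\mathbf{id}\to\mathcal{G}\mathcal{F}$ and $\mathbf{id}\to\mathcal{F}\mathcal{G}$.

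\emph{Main obstacle.} The one genuinely non-formal point is whether the restricted functors are full, i.e.\ whether every morphism between the duals comes from a morphism of the combined signature. I would avoid proving fullness directly. Instead I would use that the natural isomorphisms $\phi,\psi$ are isomorphisms \emph{in the combined categories}, which by the object-level paragraph preserve all structure. A functor pair with natural isomorphisms to the identities is automatically an equivalence, so fullness need not be established separately.
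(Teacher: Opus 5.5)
Your proposal is correct and is essentially the paper's argument: the paper simply states that the result follows immediately from Theorem \ref{duality} and Theorem \ref{duality 2}. Your object-level, morphism-level and naturality checks are exactly the unpacking of that combination, and it works because $\nabla$ and $(\cdot,\rightarrow,t)$ share only the De Morgan reduct and are interpreted by independent relations $R$ and $(S,\Theta)$ on the same spectral space $\langle\mathfrak{P}(A);g_A,\tau(\beta)\rangle$. Your observation that fullness follows formally once $\phi$ and $\psi$ are natural isomorphisms in the combined categories is correct, and it is a cleaner way to finish than proving analogues of Lemmas \ref{fully faithful 1} and \ref{fully faithful 2} directly.
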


\section{Conclusions and future lines of research}
We have obtained duality results for various modal and residuated groupoid expansions of De Morgan algebras using spectral spaces. Our methods adapted those of \cite{bimbo} and \cite{urq} under the isomorphism between $\mathbf{Priest}$ and $\mathbf{Spec}$.

Future lines of investigation may include adapting the spectral dualities obtained in this work to bitopological dualities by means of certain pairwise Stone spaces under the isomorphism established in \cite{bezhanishvili} between $\mathbf{Spec}$ and $\mathbf{PStone}$.  

\par
\vspace{.4cm} 
\noindent \textbf{Acknowledgments.} This work has been funded by a grant from the Programme Johannes Amos Comenius
under the Ministry of Education, Youth and Sports of the Czech Republic,
CZ.02.01.01/00/23$_{-}$025/0008711.

\par
\vspace{.4cm} 
\noindent \textbf{Competing Interests.} The author declares none.

\end{document}